\documentclass[10.5pt,reqno]{amsart}
\usepackage{longtable} 
\usepackage{hyperref}
\usepackage[T1]{fontenc}
\usepackage[utf8]{inputenc}
\usepackage[english]{babel} 
\usepackage{textcomp}
\usepackage{dsfont} 
\usepackage{latexsym}
\usepackage{amssymb}
\usepackage{amsthm}
\usepackage{amsmath}
\DeclareMathAlphabet{\mathpzc}{OT1}{pzc}{m}{en}
\usepackage{yfonts}
\usepackage{xfrac}
\usepackage{newlfont}
\usepackage{graphicx}
\usepackage{mathtools}
\usepackage{comment}
\usepackage{indentfirst}
\usepackage{braket}
\usepackage{mathrsfs}
\usepackage{xcolor}

\usepackage{etoolbox}

\usepackage{scalerel}[2014/03/10]
\usepackage[usestackEOL]{stackengine}
\newcommand{\dashint}{\,\ThisStyle{\ensurestackMath{%
			\stackinset{c}{.2\LMpt}{c}{.5\LMpt}{\SavedStyle-}{\SavedStyle\phantom{\int}}}%
		\setbox0=\hbox{$\SavedStyle\int\,$}\kern-\wd0}\int}

\newcommand{\Car}{\mathrm{C}}
\newcommand{\RC}{\mathrm{RC}}
\newcommand{\Samp}{\mathrm{S}}

\DeclareMathOperator{\card}{Card}

\DeclareMathOperator{\supp}{Supp}

\DeclareMathOperator{\ad}{ad}

\DeclareMathOperator{\essmin}{ess\,min}

\newcommand{\ee}{\mathrm{e}}

\newcommand{\loc}{\mathrm{loc}}
\newcommand{\vect}[1]{\mathbf{{#1}}}
\newcommand{\dd}{\mathrm{d}}

\DeclarePairedDelimiter{\abs}{\lvert}{\rvert}

\DeclarePairedDelimiter{\norm}{\lVert}{\rVert}

\let\originalleft\left
\let\originalright\right
\renewcommand{\left}{\mathopen{}\mathclose\bgroup\originalleft}
\renewcommand{\right}{\aftergroup\egroup\originalright}

\newcommand{\grado}{\Df}
\newcommand{\N}{\mathds{N}}
\newcommand{\Z}{\mathds{Z}}
\newcommand{\Q}{\mathds{Q}}

\newcommand{\C}{\mathds{C}}

\newcommand{\R}{\mathds{R}}

\newcommand{\gf}{\mathfrak{g}}

\newcommand{\Df}{\mathfrak{D}}

\newcommand{\Bs}{\mathscr{B}}
\newcommand{\Cc}{\mathcal{C}}
\newcommand{\Dc}{\mathcal{D}}

\newcommand{\Fc}{\mathcal{F}}
\newcommand{\Gc}{\mathcal{G}}

\newcommand{\Ic}{\mathcal{I}}

\newcommand{\Lc}{\mathcal{L}}
\renewcommand{\Mc}{\mathcal{M}}

\newcommand{\Pc}{\mathcal{P}}

\newcommand{\Rc}{\mathcal{R}}
\newcommand{\Sc}{\mathcal{S}}

\newcommand{\Fs}{\mathscr{F}}

\newcommand{\meg}{\leqslant}
\newcommand{\Meg}{\geqslant}
\newcommand{\eps}{\varepsilon}
\renewcommand{\phi}{\varphi}
\newcommand{\mi}{\mu}

\keywords{Lie groups,  Triebel--Lizorkin spaces, weighted subcoercive operators.}
\thanks{{\em Math Subject Classification 2020}: 46E36, 22E30.}
\thanks{The author is a member of the 	Gruppo Nazionale per l'Analisi
	Matematica, la Probabilit\`a e le	loro Applicazioni (GNAMPA) of
	the Istituto Nazionale di Alta Matematica (INdAM). The author was partially funded by the INdAM-GNAMPA Project CUP\_E5324001950001.
}

\begin{document}
	\title[Besov and Triebel--Lizorkin Spaces]{Besov and Triebel--Lizorkin Spaces on Filtered Lie Groups, III: the Spaces $F^{\infty,q}_\alpha$}
	
	\author[M.\ Calzi]{Mattia Calzi} 
	\address{Dipartimento di Matematica, Universit\`a degli Studi di
		Milano, Via C. Saldini 50, 20133 Milano, Italy}
	\email{{\tt mattia.calzi@unimi.it}}
	
	\theoremstyle{definition}
	\newtheorem{deff}{Definition}[section]

	\newtheorem{oss}[deff]{Remark}
	
	\newtheorem{ass}[deff]{Assumptions}
	
	\newtheorem{nott}[deff]{Notation}

	\theoremstyle{plain}
	\newtheorem{teo}[deff]{Theorem}
	
	\newtheorem{lem}[deff]{Lemma}
	
	\newtheorem{prop}[deff]{Proposition}
	
	\newtheorem{cor}[deff]{Corollary}
	
	\begin{abstract}
		We continue to develop a theory of  (Besov and) Triebel--Lizorkin spaces associated with weighted subcoercive operators on a real connected Lie group, focusing on the Triebel--Lizorkin space $F^{\infty,q}_\alpha(\beta)$, which requires a more technical treatment.
	\end{abstract}
 
 \maketitle

\section{Introduction}

Besov and Triebel--Lizorkin spaces form a large class of function spaces on the Euclidean spaces which provides a uniform, albeit somewhat technical, way to study simultaneously several classical function spaces, such as Sobolev spaces with integer regularity, fractional Sobolev spaces, both in the version of Sovolev--Slobodeckij--Gagliardo spaces and in the version of Bessel potential spaces, Lipschitz spaces, Hardy and BMO spaces, etc. These spaces have been extensively studied in the classical Euclidean setting (cf., e.g.,~\cite{TriebelFS,TriebelFS2,TriebelFS3}), but have also been extended to more general contexts, such as: open subsets of $\R^n$ (cf., e.g.,~\cite[Chapter 5]{TriebelFS2}); Riemannian manifolds with bounded geometry (cf., e.g.,~\cite[Chapter 7]{TriebelFS2}); Lie groups endowed with a left-invariant Riemannian (cf., e.g.,~\cite[Chapter 7]{TriebelFS2}) or sub-Riemannian metric (cf., e.g.,~\cite{BPV,BPV2,BPV3}); metric spaces endowed with suitable operators resembling a (sub-)Laplacian (cf., e.g.,~\cite{TriebelFS3,Hu}).\footnote{The literature on the subject is quite extensive and the above mentioned reference should only be intended as a very short list of examples, which is by no means complete.} 
There are nonetheless some contexts where `measuring regularity' in a `Riemannian way,' that is, grouping together all differential operators of the same order, or, more generally, in a `sub-Riemannian way,' appears to be inconvenient since it either clashes with the underlying geometry of the space or with the structure of the differential operator at hand. For instance, let $G$ be a homogeneous group, that is, a simply connected nilpotent Lie group whose Lie algebra $\gf$ has a graduation $(\gf_\lambda)_{\lambda>0}$; in other words, $\gf=\bigoplus_{\lambda>0} \gf_\lambda$ and $[\gf_\lambda,\gf_\nu]\subseteq \gf_{\lambda+\mi}$ for every $\lambda,\mi>0$. Then, $G$ may be identified with $\gf$ by means of the exponential map and $\gf$ may be endowed with a family of automorphic dilations $(\delta_r)_{r>0}$ defined so that $\delta_r(X)=r^\lambda X$ for every $X\in \gf_\lambda$. Operators which are compatible with these dilations (that is, homogeneous operators) are consequently quite natural in this context and one is therefore led to consider (`Goodman type') Sobolev spaces of the form $\Set{f\in L^p(G)\colon\forall \alpha\; (d_\alpha\meg k \implies\vect X^\alpha f\in L^p(G))}$, where $\vect X^\alpha=X_1^{\alpha_1}\cdots X_n^{\alpha_n}$ for some homogeneous basis $(X_1,\dots, X_n)$ of $\gf$, and where $d_\alpha=\sum_j \alpha_j \deg(X_j)$. It turns out, however, that these spaces behave quite weirdly for general $k$ -- for instance, they do not interpolate as one may expect. In fact, a different class of `Bessel potential' Sobolev spaces exhibiting a more natural behviour was introduced in~\cite{FischerRuzhansky}, and was shown to coincide with the previous `Goodman type' Sobolev spaces only for specific values of $k$. It is now worthwhile remarking that, whereas the usual `Bessel potential' Sobolev spaces (and, more generally, several of the Besov and Triebel--Lizorkin spaces briefly mentioned above) are essentially constructed using a (sub-)Laplacian, these `homogeneous' Sobolev spaces were constructed using positive Rockland operators instead, that is, homogeneous and hypoelliptic left-invariant differential operators. A definition using second order differential operator would simply not be possible. 
Notice, by the way, that replacing a second-order subelliptic differential operator with a higher-order one provides several technical difficulties, since the associated heat kernel cannot be positive, and there is no longer any finite speed property for the corresponding wave propagator -- tools which often lie at the core of several proofs in the literature.

It is therefore natural to wonder whether there is a more general framework which allows to deal at the same time with sub-Laplacians and positive Rockland operators. As a matter of fact, ter Elst and Robinson showed that weighted subcoercive operators provide a quite reasonable and natural choice (cf.~\cite{ElstRobinson}). Indeed, given a group $G$ whose Lie algebra is endowed with a suitable increasing filtration $(\gf_\lambda)_{\lambda>0}$, it is possible to associated a homogeneous group $G_*$ (its `contraction') to $G$ in a natural way, and to associate to every left-invariant differential operator $\Lc$ of degree $d$ some homogeneous left-invariant differential operator $P$ of degree $d$ on $G_*$ (which plays the r\^ole of the `principal part' of $\Lc$). The operator $\Lc$ is then said to be weighted subcoercive if $P+P^*$ is a positive Rockland operator. Notice that this definition mimics closely that of elliptic operators, and that Rockland operators play the r\^ole of homogeneous elliptic operators. Weighted subcoercive operators then enjoy several useful properties. For example, they generate a heat semigroup $(\ee^{-t\Lc})$ whose convolution kernel satisfies suitable Gaussian estimates (even though the exponential decay depends on the degree $d$ and is milder than the classical one). If, in addition, $\Lc$ is formally self-adjoint, then the closure of $\Lc$ on the space of test functions is self-adjoint on $L^2$, hence generates a functional calculus which enjoys particularly interesting properties when $G$ has polynomial growth.
As shown in~\cite{BCP}, using the heat semigroup associated with a weighted subcoercive operator allows one to define natural Besov and Triebel--Lizorkin  spaces  $B^{p,q}_\alpha$ and $F^{p,q}_\alpha$ on a general connected filtered Lie group. The resulting spaces then do not depend on the chosen operator. In~\cite{BCP,Calzi2} several properties of these spaces were proved, including:
\begin{itemize}
	\item $B^{p,q}_\alpha$ and $F^{p,q}_\alpha$ are Banach spaces;
	
	\item the space $C^\infty_c(G)$ of test functions is dense in $B^{p,q}_\alpha$ and $F^{p,q}_\alpha$ for $p,q<\infty$;
	
	\item $B^{p',q'}_{-\alpha}$ and $F^{p',q'}_{-\alpha}$ may be canonically identified with the duals of $B^{p,q}_\alpha$ and $F^{p,q}_\alpha$, respectively, when $p,q<\infty$;
	
	\item $B^{p_1,q_1}_{\alpha_1}\subseteq B^{p_2,q_2}_{\alpha_2}$ when $p_1\meg p_2$, $\alpha_2 -Q_*/p_2\meg \alpha_1-Q_*/p_1$, and either $q_1\meg q_2$ or $\alpha_2 -Q_*/p_2< \alpha_1-Q_*/p_1$, where $Q_*$ denotes the homogeneous dimension of $G_*$;\footnote{This and the following fact are actually true when $G$ is endowed with a left Haar measure, and should be slightly modified in the general case.}
	
	\item   $F^{p_1,q_1}_{\alpha_1}\subseteq F^{p_2,q_2}_{\alpha_2}$ when $p_1\meg p_2$, $\alpha_2 -Q_*/p_2\meg \alpha_1-Q_*/p_1$, and either $q_1\meg q_2$ or $p_1<p_2$;
	
	\item $B^{p,q}_\alpha,F^{p,q}_\alpha \subseteq L^p$ when $\alpha>0$, and $F^{p,2}_0=L^p$ for $p\in (1,\infty)$;
	
	\item if $\omega\in \R$ is sufficiently large, then $(\Lc+\omega I)^{-\alpha'}$ induces canonical isomorphisms of $B^{p,q}_\alpha$ and $F^{p,q}_\alpha$ onto $B^{p,q}_{\alpha+\alpha'}$ and $F^{p,q}_{\alpha+\alpha'}$, respectively;
	
	\item if $(X_j)$ is a family of elements of $\gf$ such that the corresponding elements $Y_j$ of $\gf_*$ induce a basis of $\gf_*/[\gf_*,\gf_*]$, and if $\dd$ is the least common multiple of the $d_j=\deg(X_j)$, then $f\in B^{p,q}_\alpha$ (resp.\ $f\in F^{p,q}_\alpha$) if and only if $\ee^{-\Lc}f\in L^p$ and $X_j^{\dd/d_j}\in B^{p,q}_{\alpha-\dd}$ (resp.\ $X_j^{\dd/d_j}\in F^{p,q}_{\alpha-\dd}$) for every $j$. In particular $F^{p,2}_{k\dd}$ coincides with the above-mentioned `Goodman type' Sobolev spaces when $k\in\N$ and $p\in (1,\infty)$;
	
	\item the spaces $B^{p,q}_\alpha$ and the spaces $F^{p,q}_\alpha$ interpolate as the classical ones;
	
	\item $B^{p,q}_\alpha\cap L^\infty$ and $F^{p,q}_\alpha\cap L^\infty$ are algebras under pointwise multiplication for every $\alpha>0$. In particular, $B^{p,q}_\alpha$ and $F^{p,q}_\alpha$ are algebras for every $\alpha>Q_*/p$;
	
	\item the spaces of pointwise multipliers of $B^{p,q}_\alpha$ and $F^{p,q}_\alpha$ may be characterized for $\alpha>Q_*/p$;
	
	\item  the spaces $F^{p,q}_\alpha$  enjoy a localization property as the classical ones;

	\item some Besov and Triebel--Lizorkin spaces may be described in terms of (finite) differences.
\end{itemize}

The purpose of this paper is to introduce a study the spaces $F^{\infty,q}_\alpha$, for $q\in [1,\infty]$ and $\alpha\in \R$, following the approach by Frazier and Jawerth~\cite{FrazierJawerth}. Since these spaces require a rather technical modification of the definition of the other Triebel--Lizorkin spaces, we preferred to avoid introducing them in~\cite{BCP,Calzi2} and deal with all the necessary technicalities here, proving the analogues of several of the above properties, or completing the picture in several of the above statements. It should be noticed, nonetheless, that we were not able to extend all the properties in this generality.

Let us also mention that several additional works (cf.~\cite{Calzi4,Calzi5,CalziRizzo}) on this subject are in preparation. In particular, we plan to   study the full scale of Besov and Triebel--Lizorkin spaces (that is, for $p,q\in (0,\infty]$) in the more particular case in which $G$ has polynomial volume growth. In this latter context, we plan to study also additional properties, such as discretization, and to study the relationship between Triebel--Lizorkin spaces and local Hardy and bmo spaces.

\smallskip

Here is a plan of the paper.  In Section~\ref{sec:2}, we shall collect several preliminary definitions and properties, mostly referring to~\cite{BCP} for proofs. In particular, we shall present a precise definition of Besov and Triebel--Lizorkin spaces in this context, except for the spaces $F^{\infty,q}_\alpha$. In Section~\ref{sec:3} we shall introduce the spaces $\Cc^q(\mi,a)$, which will play the r\^ole of the mixed-norm spaces $L^{q,p}(\mi,\beta)$ in the definition of $F^{p,q}_\alpha(\beta)$, and prove some its basic properties. In Section~\ref{sec:4} we prove some technical lemmas which eventually lead to  the fundamental Lemma~\ref{lem:8b}, which is essential to prove the equivalence of the various norms which will be used in Section~\ref{sec:5} to define the spaces $F^{\infty,q}_\alpha(\beta)$. In Section~\ref{sec:5} we shall finally consider the properties of Triebel--Lizorkin spaces studied in~\cite{BCP,Calzi2} and provide the analogues which include also the spaces $F^{\infty,q}_\alpha(\beta)$, when possible. We shall in fact only indicate the necessary modifications in the proofs of the corresponding results in~\cite{BCP,Calzi2}, except when a completely different proof is needed.

\section{Preliminaries}\label{sec:2}

\subsection{Relatively Invariant Measures and Convolution}

Throughout the paper, we shall denote with $G$ a connected (finite-dimensional, real) Lie group  with Lie algebra $\gf$. We shall endow $G$ with a non-trivial positive relatively invariant (Radon) measure $\beta$. Thus, there are two positive characters $\Delta_L$ and $\Delta_R$ of $G$ such that 
\[
\Delta_L(y)\int_G f(y x)\,\dd \beta(x) =  \int_G f(x)\,\dd \beta(x) = \Delta_R(y) \int_G f(x y )\,\dd \beta(x)
\]
for every $f\in L^1(\beta)$ and for every $y\in G$. In particular,
\[
\beta(x A)=\Delta_L(x) \beta(A) \qquad\text{and}\qquad \beta(A x)=\Delta_R(x) \beta(A)
\]
for every $\beta$-measurable subset $A$ of $G$ and for every $x\in G$. In addition,
\[
\int_G f(x^{-1}) \,\dd \beta(x)= \int_G f(x) \Delta_L(x^{-1})\Delta_R(x^{-1})\,\dd \beta(x)
\]
for every positive $\beta$-measurable function $f$ on $G$. We shall define $\beta_L\coloneqq \Delta_L^{-1}\cdot \beta$ and $\beta_R\coloneqq \Delta_R^{-1}\cdot \beta$, so that $\beta_L$ is a left Haar measure and $\beta_R$ is a right Haar measure.

We now recall some facts about convolution. Given two convolvable\footnote{We shall not provide a precise definition of this concept, since this would drag us too far away from the main topic. } distributions $f,g$ on $G$, one has
\[
\langle f*g, \phi\rangle= \langle f\otimes g, (x,y)\mapsto \phi(xy)\rangle
\]
for every $\phi\in C^\infty_c(G)$.
We shall identify each $f\in L^1_\loc(\beta)$ with $f\cdot \beta$, that is, the measure with density $f$ with respect to $\beta$. Given two convolvable functions $f,g$ such that $f*g$ is absolutely continuous with respect to $\beta$, we shall generally identify $f*g$ with its density with respect to $\beta$. Thus, under very mild conditions which will be always verified in the applications,
\begin{equation}\label{eq:1}
	(f* g)(x)=\int_G f(x y^{-1}) g(y) \Delta_R(y^{-1})\,\dd \beta(y)= \int_G f(y) g(y^{-1}x)\Delta_L(y^{-1})\,\dd \beta(y).
\end{equation} 

Similar formulae apply when either $f$ or $g$ is a distribution (and the convolution is still a function).
Observe that, if $f,g$ are convolvable distributions, $X$ is a left-invariant differential operator, and $Y$ is a right-invariant differential operator, then $Yf $ and $X g$ are convolvable and
\[
YX(f*g)=(Yf)*(Xg).
\]

If $f,g,h$ are distributions then, under some reasonable conditions that will always be satisfied in the applications, 
\[
\langle f * g, h \rangle =\langle f, h* (\Delta_R \Rc g)\rangle =\langle g, (\Delta_L \Rc f)* h\rangle,
\]
where $\langle \Rc f,\phi\rangle=\langle f,\check \phi \rangle$ for every $\phi\in C^\infty_c(G)$, and $\check \phi=\phi(\,\cdot\,^{-1})$. A word of caution here: $\Rc \beta= \Delta_L^{-1} \Delta_R^{-1} \beta$, so that $\Rc(f\cdot \beta) =(\Delta_L^{-1} \Delta_R^{-1}  \check f)\cdot \beta$. In other words, one must be careful not to confuse $\Rc f$ with $\check f$ when $f\in L^1_\loc(\beta)$.

We now recall Young's inequality in this context. Take $p_1,p_2,p_3\in [1,\infty]$ so that $\frac{1}{p_1'}+\frac{1}{p_2'}=\frac{1}{p_3'}$. Then,
\[
\norm{(\Delta_L^{1/p_2'} f)*(\Delta_R^{1/p_1'}g)}_{L^{p_3}(\beta)}\meg \norm{f}_{L^{p_1}(\beta)}\norm{g}_{L^{p_2}(\beta)}
\]
or, equivalently,
\[
\norm{f*g}_{L^{p_3}(\beta)}\meg \norm{\Delta_L^{-1/p_2'}f}_{L^{p_1}(\beta)}\norm{\Delta_R^{-1/p_1'}g}_{L^{p_2}(\beta)}
\]
for every two positive $\beta$-measurable functions $f,g$ (for positive measurable functions, convolution may be defined by means of~\eqref{eq:1}). See~\cite[Lemma 2.1]{KR78} when $\Delta_L=1$. The general case follows easily.

\subsection{Differential Operators}

\begin{deff}
	We shall generally identify the (complexification of the) enveloping algebra $U(G)$ of $\gf$ with the algebra of left-invariant differential operators. We shall fix a scalar product on $\gf$, and we shall endow $U(G)$ with the corresponding scalar product, namely the quotient of the natural scalar product on the (complexfication of the) tensor algebra over $\gf$.\footnote{The actual scalar product on $U(G)$ will essentially not matter in the sequel. This one is relatively convenient, since $\abs{X}=\abs{X^+}=\abs{\overline X}$ for every $X\in U(G)$.}
	
	Let $X$ be a left-invariant differential operator. We  denote with $X^R$ the right-invariant differential operator which induces the same point distribution as $X$ at $e$. In other words, $(X f)(e)=(X^R f)(e)$ for every $f\in C^\infty (G)$. We denote with $X^+$ the transpose of $X$ in the enveloping algebra $U(G)$. In other words, the mapping $X\mapsto X^+$ is the unique anti-automorphism of $U(G)$ (that is, such that $(XY)^+=Y^+ X^+$) which extends the automorphism $X\mapsto -X$ of $\gf$. 
	
	We denote with $X^\dag$ the formal transpose of $X$ (with respect to $\beta$), that is, the unique left-invariant differential operator such that
	\[
	\int_G (X f) g\,\dd \beta=\int_G f X^\dag g\,\dd \beta
	\]
	for every $f,g\in C^\infty_c(G)$. We denote with $X^*$ the formal adjoint of $X$, that is, $\overline X^\dag$. We define the formal transpose and the formal adjoint of right-invariant differential operators in a similar way. If $u$ is a distribution, we then define $X u$ so that
	\[
	\langle X u,\phi\rangle =\langle u, X^\dag \phi\rangle
	\]
	for every $\phi\in C^\infty_c(G)$. In this way, if $f\in C^\infty(G)$, then $(X f)\cdot \beta=X(f\cdot \beta)$.

	In order to simplify the notation, we write $X^{R\dag}$ instead of $(X^R)^\dag$, etc.
\end{deff}

Notice that $X f$ \emph{depends} on $\beta$ if $f$ is a distribution, whereas $X^\dag f$ does not. On the contrary, $X^\dag f$ \emph{depends} on $\beta$ if $f\in C^\infty(G)$, whereas $X f$ does not.  This unfavorable dichotomy disappears if $\beta$ is right-invariant. In the following proposition we collect some elementary facts; cf.~\cite[Proposition 2.2]{BCP} for a proof.

\begin{prop}\label{prop:9}
	The following hold:
	\begin{enumerate}
		\item[\textnormal{(1)}] $X^\dag f= \Delta_R^{-1} X^+(\Delta_R f)$ and $X^{R\dag} f=\Delta_L^{-1} X^{+R}(\Delta_L f)$ for every $X\in U(G)$ and for every $f\in C^\infty(G)$;
		
		\item[\textnormal{(2)}] $X\delta_e= \Delta_R \Rc(X^\dag \delta_e)$  and $X^R\delta_e=  \Delta_L \Rc (X^\dag \delta_e)$   for every $X\in U(G)$;
		
		\item[\textnormal{(3)}] $X^+ f=(X^R \check f) \check{\,}$ for every $X\in U(G)$ and for every $f\in C^\infty(G)$; 
		\item[\textnormal{(4)}] $X\delta_e=X^{\dag R\dag}\delta_e$ for every $X\in U(G)$;
		
		\item[\textnormal{(5)}] $X\chi= (X\chi)(e)\chi$ for every $X\in U(G)$ and for every character $\chi\colon G\to \C\setminus \Set{0}$.
	\end{enumerate}
\end{prop}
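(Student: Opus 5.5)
The plan is to reduce every item to the case of a single vector field $X\in\gf$ by multiplicativity, and then treat the generator case with the invariance properties of $\beta$ and explicit one-parameter group computations. All maps involved are (anti-)homomorphisms of the relevant algebras, and the identities are compatible with products. For (1), the map $X\mapsto \Delta_R^{-1}X^+\Delta_R$ (conjugation by the multiplication operator) is an anti-automorphism, just like $X\mapsto X^\dag$. It therefore suffices to check $X\in\gf$. In that case
\[
\int_G (Xf)g\,\dd\beta=\frac{\dd}{\dd t}\Big|_{t=0}\int_G f(x\exp(tX))g(x)\,\dd\beta(x),
\]
and the right invariance relation $\int_G h(x y)\,\dd \beta(x)=\Delta_R(y)^{-1}\int_G h\,\dd\beta$ gives
\[
\int_G (Xf)g\,\dd\beta=-\int_G f\,Xg\,\dd\beta-(X\Delta_R)(e)\int_G fg\,\dd\beta .
\]
Hence
\[
X^\dag g=-Xg-(X\Delta_R)(e)g .
\]
Since $X\Delta_R=(X\Delta_R)(e)\Delta_R$ for a character (this is (5), which I prove first), we get $\Delta_R^{-1}X^+(\Delta_R g)=-Xg-(X\Delta_R)(e)g$, and the two sides agree. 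The right-invariant version is identical, using $\Delta_L$ and left translations.

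For (5), note that $t\mapsto\chi(x\exp(tX))=\chi(x)\chi(\exp tX)$ when $X\in\gf$, so $X\chi=(X\chi)(e)\chi$. The map $X\mapsto (X\chi)(e)$ is a character of $U(G)$, because $XY\chi=(Y\chi)(e)X\chi$. Induction on products then gives the claim for general $X$.

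For (3), the inversion intertwines left and right translations: for $X\in\gf$,
\[
\frac{\dd}{\dd t}f(x\exp tX)\Big|_{t=0}=\frac{\dd}{\dd t}\check f(\exp(-tX)x^{-1})\Big|_{t=0}.
\]
This yields $Xf=((-X)^R\check f)\check{\,}$. Both $X\mapsto X^+$ composed with this intertwining and $X\mapsto X^R$ reverse or preserve the order in the right way, which extends the identity to $U(G)$. Here $X\mapsto X^R$ is an anti-isomorphism onto right-invariant operators when the right-invariant fields are taken with the correct sign convention, so that $(XY)^R$ matches $(X^R\check f)\check{\,}$ compositions. I expect sign and ordering bookkeeping to be the main source of error in this step. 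To guard against it, I will verify the statement on a monomial $X_1X_2$ explicitly before invoking induction.

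For (2) and (4), I pair with test functions. We have $\langle X\delta_e,\phi\rangle=(X^\dag\phi)(e)$, while
\[
\langle \Delta_R\Rc(X^\dag\delta_e),\phi\rangle=\langle X^\dag\delta_e,(\Delta_R\phi)\check{\,}\rangle=(X^{\dag\dag}(\Delta_R\phi)\check{\,}\,)(e).
\]
Using (1) and (3) to express $X^\dag$ via $X^+$ and $\Delta_R$, then (5) to move the characters through, the two expressions reduce to the same value. The evaluation at $e$ removes the factor $\Delta_R(e)=1$, and (3) converts $X^+$ acting on a reflected function into $X^R$, and vice versa. The second formula in (2) follows in the same way with $\Delta_L$ and the right-invariant part of (1). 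Finally, (4) follows by combining the two formulas in (2): apply the first one to $X$ and the second one, with $X^\dag$ in place of $X$ and transposes for right-invariant operators, and check that the $\Delta$ factors cancel after evaluating at $e$. I expect (2)/(4) to be the main obstacle, purely because of the number of characters and transposes to track; the strategy is to always reduce to evaluation at $e$ of a smooth function and to use (5) to pull characters out.
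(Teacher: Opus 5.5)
Your proposal is correct. The paper itself does not prove this proposition; it only refers to Proposition 2.2 of an earlier work, so there is no in-paper argument to compare with. Your route is sound: compute on $X\in\gf$ using the relative invariance of $\beta$, then extend to $U(G)$ by (anti-)multiplicativity. Your generator computations check out, in particular $X^\dag=-X-(X\Delta_R)(e)$ for $X\in\gf$.

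A few points can be tightened.

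First, (4) needs no appeal to (2). Since $\dag$ is an involution and $Z$, $Z^R$ induce the same point distribution at $e$,
\[
\langle X^{\dag R\dag}\delta_e,\phi\rangle=(X^{\dag R}\phi)(e)=(X^\dag\phi)(e)=\langle X\delta_e,\phi\rangle .
\]
Your detour through (2) can be made to work, but it also needs the right-invariant analogue $Y\delta_e=\Delta_L\Rc(Y^\dag\delta_e)$ of the first formula, which you did not state.

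Second, (5) is not needed for (2). By (1), (3), and the definition of $X^R$,
\[
(X^\dag\phi)(e)=(X^+(\Delta_R\phi))(e)=(X^R(\Delta_R\phi)\check{\,})(e)=(X(\Delta_R\phi)\check{\,})(e).
\]
The same computation with $X^{R\dag}$ and $\Delta_L$ gives the second formula.

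Third, the right-invariant half of (1) needs the right-invariant version $X^R\chi=(X\chi)(e)\chi$ of (5), which is equally immediate. Also, in that case $X\mapsto X^{R\dag}$ and $X\mapsto \Delta_L^{-1}X^{+R}\Delta_L$ are both homomorphisms, each a composition of two anti-homomorphisms, rather than anti-homomorphisms as in the left-invariant case. The extension argument is unaffected.

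Finally, there is no sign convention to choose in (3). The definition of $X^R$ forces $X^R\phi(x)=\frac{\dd}{\dd t}\Big|_{t=0}\phi(\exp(tX)x)$ for $X\in\gf$. The identity $(XY)^R=Y^RX^R$ follows by exchanging the order of the two point distributions acting on $(a,b)\mapsto\phi(ab)$.
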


Notice that, by (4), 
\[
\begin{split}
	(Xf)*g=(f*X\delta_e)*g=f*(X\delta_e*g)=f*(X^{\dag R \dag} \delta_e*g)= f*(X^{\dag R \dag} g)
\end{split}
\]
under some reasonable conditions on $f$ and $g$ (which are needed to grant associativity of convolution).
Notice that $X^{\dag R \dag}-X^R$ is a differential operator whose order is strictly less than the order of $X$ (and whose degree is strictly less than the degree of $X$, with the terminology of the following subsection).

\subsection{Filtrations and Weighted Subcoercive Operators}

Throughout the paper, $(\gf_\lambda)_{\lambda\Meg 0}$ will denote an increasing filtration of $\gf$ (that is, $[\gf_\lambda, \gf_\mi]\subseteq \gf_{\lambda+\mi}$ for every $\lambda, \mi\Meg 0$) such that $\gf_\lambda=0$ for every $\lambda<1$, $\bigcup_{\lambda\Meg 0} \gf_\lambda=\gf$, and $\bigcap_{\mi>\lambda} \gf_\mi=\gf_\lambda$ for every $\lambda\Meg 0$.\footnote{We consider the full range $\lambda\Meg 0$  for notational convenience, in analogy with the filtration $(U_\lambda)$, for which $U_\lambda\neq \Set{0}$ for every $\lambda\Meg 0$.}

For every $X\in \gf$, we define  $\deg X\coloneqq \min\Set{\lambda\Meg0\colon X\in \gf_\lambda}$ and we call $\deg X$ the degree of $X$.
Define, for every $\lambda>0$, $\gf_{\lambda^-}\coloneqq \bigcup_{\mi<\lambda} \gf_\mi$, $\gf_{*,\lambda}\coloneqq\gf_\lambda/\gf_{\lambda^-} $, and 
\[
\gf_*\coloneqq \bigoplus_{\lambda>0} \gf_{*,\lambda}.
\]
Define a Lie algebra structure on $\gf_*$ as follows: if $X=\sum_{\lambda>0} (X_\lambda+ \gf_{\lambda^-})$ and $Y= \sum_{\lambda>0} (Y_\lambda+ \gf_{\lambda^-})$ for some $(X_\lambda),(Y_\lambda)\in \prod_{\lambda>0} \gf_\lambda$, then
\[
[X,Y]\coloneqq \sum_{\lambda,\mi>0}\left(  [X_{\lambda},Y_{\mi}]+\gf_{(\lambda+\mi)^-}\right) .
\]
It is easily seen that $(\gf_{*,\lambda})_{\lambda>0}$ is a graduation of type $((0,+\infty),+)$ of $\gf_*$, that is, $[\gf_{*,\lambda}, \gf_{*,\mi}]\subseteq \gf_{*,\lambda+\mi}$ for every $\lambda,\mi>0$. One may then endow $\gf_*$ with the automorphic dilations $(\delta_r)_{r>0}$ defined so that $\delta_r(X)=r^\lambda X$ for every $X\in \gf_{*,\lambda}$ and for every $\lambda>0$. Thus, $\gf_*$ is the Lie algebra of some homogeneous group $G_*$, with homogeneous dimension $Q_*\coloneqq \sum_{\lambda>0} \dim \gf_{*,\lambda}$.
\emph{We shall always assume, in the sequel, that $\Lambda\coloneqq \Set{\lambda \Meg 1\colon \gf_{*,\lambda}\neq 0}$ is contained in a one-dimensional $\Q$-vector space. In other words, setting $\dd_0\coloneqq \min \Lambda$, we assume that $\Lambda \subseteq \Q \dd_0$. This assumption ensures that (positive) Rockland operators on $G_*$   exist (and is essentially necessary).}

We observe explicitly that we required $\gf_\lambda=\Set{0}$ for $\lambda<1$ in order for the control modulus $\abs{\,\cdot\,}_*$ (cf.~Definition~\ref{def:3} below) to induce a left-invariant \emph{distance} on $G$ (rather than a quasi-distance).   We preferred to avoid imposing that $\gf_1\neq \Set{0}$ in order to keep a natural comparison with graded (or, more generally, homogeneous) groups: if $\gf$ has a graduation $(\tilde \gf_j)_{j\in \Z_+^*}$ (with integer degrees), then it is natural to set $\gf_\lambda=\bigoplus_{j=1}^{[\lambda]} \tilde \gf_j$ for every $\lambda \Meg 0$, but there is no guarantee that $\tilde \gf_1$ should be non-trivial.

We now extend this filtration to the  enveloping algebra $U(G)$.	
For every $\lambda\Meg 0$, define $U_\lambda$ as the vector space generated by the products of the form $X_1\cdots X_k$, for $k\Meg 0$, $X_1,\dots, X_k\in \gf$ and $\deg X_1+\cdots+ \deg X_k\meg \lambda$.\footnote{Thus, the identity operator, corresponding to the case $k=0$, belongs to all $U_\lambda$.} 
Then, $(U_\lambda)$ is an increasing filtration of $U(G)$, that is, $U_\lambda, U_\mi\subseteq U_\lambda U_\mi\subseteq U_{\lambda+\mi}$ for every $\lambda, \mi\Meg 0$.
For every $X\in U(G)$, we define $\deg X\coloneqq \min\Set{\lambda\Meg 0\colon X\in U_\lambda}$. Notice that $\gf\cap U_\lambda=\gf_\lambda$ for every $\lambda\Meg 0$ as a consequence of Proposition~\ref{prop:8} below (and its proof), so that this definition is consistent with the previous one.

Define $U_{\lambda^-}\coloneqq \bigcup_{\mi<\lambda} U_\mi$ for $\lambda>0$, $U_{0^-}\coloneqq\Set{0}$, $U_{*,\lambda}\coloneqq U_\lambda/U_{\lambda^-}$ for every $\lambda\Meg 0$, and 
\[
U_*\coloneqq \bigoplus_{\lambda\Meg 0} U_{*,\lambda} .
\] 
We define an algebra structure on $U_*$ as follows:  if $X=\sum_{\lambda\Meg 0} (X_\lambda+ U_{\lambda^-})$ and $Y= \sum_{\lambda\Meg 0} (Y_\lambda+ U_{\lambda^-})$ for some $(X_\lambda),(Y_\lambda)\in \prod_{\lambda\Meg 0} U_\lambda$, then
\[
XY\coloneqq \sum_{\lambda,\mi\Meg 0}\left(  X_{\lambda}Y_{\mi}+U_{(\lambda+\mi)^-}\right) .
\]
It is easily seen that $U_*$ becomes a graded algebra of type $([0,+\infty),+)$ with this structure.

\begin{prop}\label{prop:8}
	The canonical inclusions $\gf_\lambda \subseteq U_\lambda$, $\lambda> 0$, induce a linear mapping $\pi \colon \gf_*\to U_*$. The canonical extension $U(\pi)\colon U(G_*)\to U_*$ of $\pi$ is an  isomorphism of graded algebras.
\end{prop}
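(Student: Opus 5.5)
Write $\pi_\lambda\colon \gf_\lambda\to U_{*,\lambda}$ for the map $X\mapsto X+U_{\lambda^-}$. It is well defined because $\gf_\lambda\subseteq U_\lambda$, since each $X\in\gf_\lambda$ is a product of length one with $\deg X\meg\lambda$. It also vanishes on $\gf_{\lambda^-}$, because $\gf_\mi\subseteq U_\mi\subseteq U_{\lambda^-}$ for $\mi<\lambda$. Hence it induces a linear map $\gf_{*,\lambda}\to U_{*,\lambda}$, and summing over $\lambda$ gives $\pi\colon\gf_*\to U_*$, which preserves degrees.

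To extend $\pi$ to $U(G_*)$ we check that it is a Lie map into the commutator bracket of $U_*$. For $X\in\gf_\lambda$ and $Y\in\gf_\mi$ we have $XY-YX=[X,Y]$ in $U(G)$. This element lies in $\gf_{\lambda+\mi}$, and its class in $U_{*,\lambda+\mi}$ is exactly $\pi([X,Y]_*)$. By the universal property of $U(G_*)$ we obtain an algebra homomorphism $U(\pi)$, which is graded because $\pi$ is.

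Surjectivity is immediate. $U_\lambda$ is spanned by products $X_1\cdots X_k$ with $\sum\deg X_j\meg\lambda$. Modulo $U_{\lambda^-}$ only those with $\sum\deg X_j=\lambda$ survive, and their classes are $\pi(X_1)\cdots\pi(X_k)$, with $X_j$ read in $\gf_{*,\deg X_j}$.

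The main obstacle is injectivity. I would prove it with a PBW argument adapted to the filtration.
\begin{itemize}
\item Choose a basis $X_1,\dots,X_n$ of $\gf$ adapted to the filtration: for each $\lambda$, the $X_j$ with $\deg X_j\meg\lambda$ span $\gf_\lambda$. Then the classes $Y_j$ of the $X_j$ in $\gf_{*,\deg X_j}$ form a homogeneous basis of $\gf_*$.
\item By PBW for $G_*$, the monomials $\vect Y^\alpha$ form a basis of $U(G_*)$, and $U(\pi)(\vect Y^\alpha)$ is the class of $\vect X^\alpha$ in $U_{*,d_\alpha}$, where $d_\alpha=\sum_j\alpha_j\deg X_j$.
\item The key claim is that $\{\vect X^\alpha\colon d_\alpha\meg\lambda\}$ is a basis of $U_\lambda$. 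Linear independence holds by PBW for $U(G)$. For spanning, take a generator $Z_1\cdots Z_k$ of $U_\lambda$ and expand each $Z_i$ in the adapted basis, using only $X_j$ with $\deg X_j\meg\deg Z_i$. Then reorder the factors. Each swap $X_aX_b=X_bX_a+[X_a,X_b]$ replaces two factors by one of degree at most $\deg X_a+\deg X_b$, and the bracket is again expanded in the adapted basis. Induction on the length $k$ (and, within a fixed length, on the number of inversions) shows the product lies in the span of $\vect X^\alpha$ with $d_\alpha\meg\lambda$.
\item Granting the claim, $U_{\lambda^-}$ is spanned by the $\vect X^\alpha$ with $d_\alpha<\lambda$. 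Indeed $\Lambda\subseteq\Q\dd_0$, so the values $d_\alpha$ form a discrete set, and the union over $\mi<\lambda$ stabilizes.
\item Therefore the classes of $\vect X^\alpha$ with $d_\alpha=\lambda$ form a basis of $U_{*,\lambda}$. So $U(\pi)$ sends a basis of $U(G_*)_\lambda$ to a basis of $U_{*,\lambda}$, and it is bijective.
\end{itemize}
As a byproduct, $\gf\cap U_\lambda=\gf_\lambda$: an element of $\gf$ is a combination of degree-one monomials, and by the claim it lies in $U_\lambda$ only if it involves only $X_j$ with $\deg X_j\meg\lambda$.
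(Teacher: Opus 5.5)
Your proof is correct and takes essentially the same approach the paper relies on; the paper itself only refers to Proposition 4.2 of its earlier work. You choose a filtration-adapted basis and use PBW to show that the ordered monomials $\vect X^\alpha$ with $d_\alpha\meg\lambda$ form a basis of $U_\lambda$, which gives both the graded isomorphism and the consistency fact $\gf\cap U_\lambda=\gf_\lambda$ that the paper attributes to this proof.

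The appeal to $\Lambda\subseteq\Q\dd_0$ is superfluous. Since $U_{\lambda^-}$ is the increasing union over $\mi<\lambda$ of the spans of $\Set{\vect X^\alpha\colon d_\alpha\meg\mi}$, it equals the span of $\Set{\vect X^\alpha\colon d_\alpha<\lambda}$ without any discreteness.
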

Cf.~\cite[Proposition 4.2]{BCP}
From now on, we shall identify $U_*$ and $U(G_*)$ by means of $U(\pi)$.

\begin{deff}
	We say that a family $(X_j)_{j\in J}$ of elements of $\gf$ is a minimal basis if, setting $Y_j\coloneqq X_j+ \gf_{(\deg X_j)^-}$, the family $(Y_j)$ induces a (homogeneous) basis of $\gf_*/[\gf_*,\gf_*]$.
	We denote with $\dd$ the least common multiple of the degrees of the $X_j$, $j\in J$.\footnote{Notice that $\dd$ is well defined since by our assumption the $d_j$ are all integer multiples of some element of $(0,+\infty)$.}
	
	We say that $\Lc $ is weighted subcoercive if $ \Lc+\Lc^*+U_{\grado^-}$ is a positive Rockland operator on $G_*$.
\end{deff}

If $(X_j)$ is a minimal basis of $\gf$, then $(X_j)$ generates $\gf$ as a Lie algebra, and also generates the filtrations $(\gf_\lambda)$ and $U_\lambda$ (cf.~\cite[Proposition 4.4]{BCP}). In other words, $\gf_\lambda$  is the vector space generated by the vector fields of the form $\ad(X_{j_1})\cdots \ad(X_{j_{k-1}})X_{j_k}$, where $k\Meg 1$, $j_1,\dots, j_k\in J$, and $\deg(X_{j_1})+\cdots +\deg(X_{j_k})\meg \lambda$. Analogously, $U_\lambda$ is the vector space generated by the differential operators for the form $X_{j_1}\cdots X_{j_k}$, where $k\Meg 0$, $j_1,\dots, j_k\in J$, and $\deg(X_{j_1})+\cdots +\deg(X_{j_k})\meg \lambda$.
In particular, $(X_j)$ is a `reduced weighted algebraic basis' of $\gf$, in the terminology of~\cite{ElstRobinson}.

Observe that $\dd$ does \emph{not} depend on the choice of $(X_j)$, since it is the least common multiple of the degrees of the non-zero elements of $\gf_*/[\gf_*,\gf_*]$.

\begin{prop}\label{prop:4}
	Let $(X_j)_{j\in J}$ be a minimal basis of $\gf$. Then, $\Lc\coloneqq \sum_{j\in J} (X_j^{\dd/d_j})^\dag X^{\dd/d_j}_j$, where $d_j=\deg X_j$ for every $j\in J$, is a real,  positive and formally self-adjoint  weighted subcoercive operator.
\end{prop}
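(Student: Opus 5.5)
Write $m_j\coloneqq \dd/d_j\in\N$ and $\Lc\coloneqq\sum_{j\in J}(X_j^{m_j})^\dag X_j^{m_j}$; by construction each summand has degree at most $m_j d_j=\dd$. The argument splits into three parts: the elementary algebraic properties, positivity, and weighted subcoercivity. The last is the only nontrivial one.

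\emph{Algebraic properties.} Each $X_j$ is a real left-invariant vector field, so each $X_j^{m_j}$ is real. By Proposition~\ref{prop:9}(1), $(X_j^{m_j})^\dag f=\Delta_R^{-1}(X_j^{m_j})^+(\Delta_R f)$, and $\Delta_R$ is a positive real character, so $(X_j^{m_j})^\dag$ is real as well. Hence $\Lc$ is real and $\Lc^*=\overline{\Lc}^\dag=\Lc^\dag$. Since $(A^\dag B)^\dag=B^\dag A$, we get $\Lc^\dag=\Lc$, so $\Lc$ is formally self-adjoint. Positivity follows from
\[
\int_G (\Lc f)\overline f\,\dd\beta=\sum_{j\in J}\int_G \abs{X_j^{m_j}f}^2\,\dd\beta\Meg 0
\qquad\text{for every } f\in C^\infty_c(G).
\]

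\emph{Principal part of $\Lc$.} I reduce mod $U_{\dd^-}$ using Proposition~\ref{prop:9}(1). Writing $\Delta_R=\ee^{\chi}$ with $\chi$ additive, the Leibniz rule and Proposition~\ref{prop:9}(5) give $X^\dag=X^+ + (\text{lower order terms})$. More precisely, $X^\dag-X^+$ is a linear combination of products of fewer of the $X_j$'s with constant coefficients $X_j\chi(e)$. So $(X_j^{m_j})^\dag\equiv (X_j^{m_j})^+=(-1)^{m_j}X_j^{m_j}$ modulo $U_{\dd^-}$, because dropping a factor strictly lowers the degree. Through the identification of Proposition~\ref{prop:8}, the class of $\Lc$ in $U_{*,\dd}$ is therefore
\[
P\coloneqq\sum_{j\in J}(Y_j^{m_j})^+Y_j^{m_j},
\]
where $Y_j\in\gf_{*,d_j}$ is the image of $X_j$. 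Here I should check that $Y_j\neq 0$ in $\gf_{*,d_j}$; this holds by definition of $d_j$. Moreover $\Lc+\Lc^*=2\Lc$, so the class of $\Lc+\Lc^*$ is $2P$.

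\emph{$P$ is a positive Rockland operator on $G_*$.} $P$ is homogeneous of degree $\dd$, and it is formally self-adjoint and positive with respect to Haar measure on the nilpotent group $G_*$, where $\dag=+$. For the Rockland condition, take a nontrivial irreducible unitary representation $\pi$ and a smooth vector $v$ with $\dd\pi(P)v=0$. Then
\[
0=\langle \dd\pi(P)v,v\rangle=\sum_{j\in J}\norm{\dd\pi(Y_j)^{m_j}v}^2,
\]
so $\dd\pi(Y_j)^{m_j}v=0$ for every $j$.

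This is the main obstacle: passing from $\dd\pi(Y_j)^{m_j}v=0$ to $\dd\pi(Y_j)v=0$. I would use that $\dd\pi(Y_j)$ is skew-adjoint on smooth vectors, so $-\dd\pi(Y_j)^2$ is essentially self-adjoint and nonnegative, and apply spectral calculus to the one-parameter group $\pi(\exp tY_j)$. Then $\dd\pi(Y_j)^{m}v=0$ forces the spectral measure of $v$ to be concentrated at $0$, hence $\dd\pi(Y_j)v=0$. Once $\dd\pi(Y_j)v=0$ for all $j$, the $Y_j$ generate $\gf_*$ as a Lie algebra, since they span $\gf_*$ modulo $[\gf_*,\gf_*]$ and $\gf_*$ is nilpotent. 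Hence $\dd\pi(\gf_*)v=0$, so $v$ is $G_*$-invariant. Irreducibility and nontriviality of $\pi$ then force $v=0$, which establishes the Rockland property and with it weighted subcoercivity.
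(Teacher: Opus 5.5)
The paper gives no proof of this proposition; it is one of the preliminaries imported from earlier work. Your argument is the standard one for this fact. The parts on reality, formal self-adjointness, positivity and the Rockland property are correct.

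There is, however, a degree error that must be repaired. Each summand $(X_j^{m_j})^\dag X_j^{m_j}$ has degree $2m_jd_j=2\dd$, not $m_jd_j=\dd$. So $\Lc$ does not lie in $U_\dd$ in general, its ``class in $U_{*,\dd}$'' is undefined, and the degree $\grado$ in the definition of weighted subcoercivity is $2\dd$. Likewise your $P$ is homogeneous of degree $2\dd$, not $\dd$.

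The fix is immediate. Your congruence $(X_j^{m_j})^\dag\equiv(-1)^{m_j}X_j^{m_j}$ modulo $U_{\dd^-}$ is correct for the factor. Multiply it on the right by $X_j^{m_j}\in U_\dd$ to get $(X_j^{m_j})^\dag X_j^{m_j}\equiv(-1)^{m_j}X_j^{2m_j}$ modulo $U_{(2\dd)^-}$. This holds because $U_\mu U_\dd\subseteq U_{\mu+\dd}\subseteq U_{(2\dd)^-}$ for every $\mu<\dd$. Hence the class of $\Lc$ in $U_{*,2\dd}$ is exactly your $P$.

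You should also state explicitly that $\deg\Lc$ is exactly $2\dd$; otherwise $\Lc+\Lc^*+U_{\grado^-}$ would live in a different quotient. This follows from $P\neq 0$ in $U(G_*)$, which is a consequence of the Rockland property you establish, combined with the identification in Proposition~\ref{prop:8}.

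With this correction the rest of your argument goes through unchanged. As a minor simplification, passing from $A^m v=0$ to $Av=0$ needs no spectral theory, where $A$ is the derived action of $Y_j$ on smooth vectors. $A$ is skew-symmetric on the invariant space of smooth vectors. So if $A^mv=0$ with $m\Meg 2$ and $k\coloneqq\lceil m/2\rceil<m$, then $\norm{A^kv}^2=(-1)^k\langle A^{2k}v,v\rangle=0$. Iterating brings the exponent down to $1$.
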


Here, by `positive' we mean that $\int \Lc f \overline f \,\dd \beta\Meg 0$ for every $f\in C^\infty_c(G)$.

Notice that, if $\gf_1$ is a vector subspace of $\gf$ which generates $\gf$ as a Lie algebra, and if $(\gf_\lambda)$ is the filtration generated by $\gf_1$ (that is, $\gf_\lambda$ is the vector space generated by the elements of $\gf_1$ -- if $\lambda\Meg 1$ -- and their commutators up to order $[\lambda]$), then any basis of $\gf_1$ is a minimal basis (and conversely), and the operator $\Lc$ defined above is a sub-Laplacian (with drift unless $\beta$ is right-invariant). This observation provides the link with the theory developed in~\cite{BPV,BPV2,BPV3}.

We shall now define a control modulus on $G$. 
\begin{deff}\label{def:3}
	Given an absolutely continuous curve $\gamma\colon [0,1]\to G$, we  define the content of $\gamma$ as the greatest lower bound of the $\eps>0$ such that 	
	\[
	\abs{P_\lambda \dd L_{\gamma(t)}^{-1}\gamma'(t)} \meg \min(\eps, \eps^{\lambda}) 
	\]
	for almost every $t\in [0,1]$ and for every $\lambda>0$, where $P_\lambda$ is the orthogonal projector of $\gf$ onto $\gf_\lambda \ominus \gf_{\lambda^-}$ (this is non-zero only for finitely many $\lambda>0$) and $L_{\gamma(t)}$ is the left translation by $\gamma(t)$. 
	
	Given $x\in G$, we shall define $\abs{x}_*$  as the greatest lower bound of the contents of the absolutely continuous curves $\gamma\colon [0,1]\to G$ such that $\gamma(0)=e$ and $\gamma(1)=x$.
	
	We  endow $G$ with the left-invariant distance $d(x,y)\coloneqq \abs{y^{-1}x}_*$.
\end{deff} 

Choosing a different scalar product on $\gf$   gives rise to bi-Lipschitz equivalent control moduli. Equivalence at infinity follows from the fact that all these control moduli are `connected moduli,'\footnote{In fact, every $x\in G$ may be written as $x_1\cdots x_k$, where $\abs{x_j}_*\meg 1$ for $j=1,\dots, k$, and $k\meg \abs{x}_*+1$.}  whereas equivalence near $e$ follows from~\cite[Corollary 6.5]{ElstRobinson}.  
Notice that here we are not requiring $\gamma$ to be `horizontal.' One may also require $\gamma$ to be horizontal with respect to some fixed weighted algebraic basis compatible with the filtration $(\gf_\lambda)$, in the terminology of~\cite{ElstRobinson}, and still get an equivalent control modulus. This would provide a better mean value theorem for the corresponding `horizontal gradient,' but we shall not need this kind of more precise estimates.

It is known that 
\[
\beta(B(e,r))\asymp r^{Q_*}, \qquad r\to 0^+,
\]
while
\[
\beta(B(e,r))\meg \ee^{C r}
\]
for some constant $C>0$ and for every $r\Meg 1$ (cf., e.g.,~\cite[Sect.\ 2.3]{Martini}).

\begin{deff}
	From now on, we shall fix a  weighted subcoercive operator $\Lc$ with degree $\grado$. We shall denote with $(h_t)_{t>0}$ the corresponding heat kernel. In other words, $\ee^{-t\Lc}f=f*h_t$ for every $f\in L^2(\beta)$, where $(\ee^{-t\Lc})_{t>0}$ is the semigroup generated by the closure of $\Lc$, with initial domain $C^\infty_c(G)$, in $L^2(\beta)$ (cf.~Theorem~\ref{teo:7} below).
	
	For every $\omega\in \R$, we shall set $\Lc_\omega \coloneqq \Lc+ \omega I$.
\end{deff}

Cf.~\cite[Theorems 4.8 and 5.4]{BCP} for a proof of the following result.

\begin{teo}\label{teo:7}
	There is $\omega\in \R$ such that the following hold:
	\begin{enumerate} 
		\item[\textnormal{(1)}] the closure of $\Lc$, with initial domain $C^\infty_c(G)$,  generates a semigroup of operators of $L^2(\beta)$; if $\Lc=\Lc^*$, then $\Lc$ is essentially self-adjoint on $C^\infty_c(G)$;
		
		\item[\textnormal{(2)}] for every $\lambda,\lambda'\Meg 0$ there are $b,C>0$ such that  
		\[
		\abs{X Y^R h_t(x)}\meg C \abs{X}\abs{Y} t^{-(Q_* + \lambda+\lambda')/\grado} \ee^{\omega t} \ee^{- b (\abs{x}_*^{\grado}/t)^{1/(\grado-1)}}
		\]
		for every $X\in U_\lambda$, for every $Y\in U_{\lambda'}$, and for every $x\in G$;
	\end{enumerate}
\end{teo}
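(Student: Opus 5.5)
The plan is to reduce both assertions to the theory of ter Elst and Robinson~\cite{ElstRobinson} for the left-invariant Haar measure $\beta_L$, and then to transfer the results to the relatively invariant measure $\beta$ by conjugation with a character.

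For (1), observe that $\Delta_L^{1/2}$ induces a unitary operator $U\colon L^2(\beta)\to L^2(\beta_L)$ defined by $Uf=\Delta_L^{-1/2}f$. By Proposition~\ref{prop:9}(5), $X\chi=(X\chi)(e)\chi$ for every character $\chi$. Hence, by the Leibniz rule, $U\Lc U^{-1}=\Lc'$, where $\Lc'$ is the left-invariant operator obtained from $\Lc$ by the automorphism of $U(G)$ extending $Y\mapsto Y+(Y\Delta_L^{1/2})(e)$ on $\gf$. This automorphism only adds lower-degree terms, since constants have degree $0$. Consequently $\Lc'$ has the same principal part in $U_*=U(G_*)$, modulo $U_{\grado^-}$, as $\Lc$.

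One must also check that adjoints behave correctly. By Proposition~\ref{prop:9}(1), $\Lc^*$ differs from the $\beta_L$-adjoint by conjugation with characters, again only in lower-order terms. Therefore $\Lc'+\Lc'^*\equiv\Lc+\Lc^*$ modulo $U_{\grado^-}$, and $\Lc'$ is weighted subcoercive with respect to $\beta_L$. Ter Elst and Robinson then show that the closure of $\Lc'$ on $C^\infty_c(G)$ generates a holomorphic semigroup on $L^2(\beta_L)$, and that it is essentially self-adjoint when it is formally self-adjoint. Both properties are preserved under $U^{-1}$, and $U$ maps $C^\infty_c(G)$ onto itself.

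For (2), I would write the kernel $h_t$ of $\ee^{-t\Lc}$ with respect to $\beta$ in terms of the kernel $h'_t$ of $\ee^{-t\Lc'}$ with respect to $\beta_L$. Using~\eqref{eq:1}, one finds $h_t=\Delta_L^{1/2}\,\Delta_R^{a}\,h'_t$ for some explicit power $a$. The exact exponents are routine bookkeeping coming from the density $\Delta_L$ of $\beta$ relative to $\beta_L$ and from the convention in~\eqref{eq:1}. The Gaussian bounds for $XY^Rh'_t$ are given in~\cite{ElstRobinson}, namely
\[
\abs{XY^Rh'_t(x)}\meg C t^{-(Q_*+\lambda+\lambda')/\grado}\ee^{\omega t}\ee^{-b(\abs{x}_*^{\grado}/t)^{1/(\grado-1)}}.
\]
Moving $X$ and $Y^R$ past the character factors with Leibniz and Proposition~\ref{prop:9}(5) produces lower-degree derivatives multiplied by constants. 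Each such term satisfies a bound with a smaller power of $t^{-1}$, which is absorbed for $t\meg1$ and by enlarging $\omega$ for $t\Meg1$.

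Finally, the character factors must be controlled. Since $\abs{\log\Delta(x)}\meg c\abs{x}_*$, we have $\Delta(x)\meg\ee^{c\abs{x}_*}$. Young's inequality gives
\[
c\abs{x}_*\meg \tfrac b2(\abs{x}_*^{\grado}/t)^{1/(\grado-1)}+C't,
\]
since the exponent $\grado/(\grado-1)>1$ in $\abs{x}_*$ and the scaling in $t$ is exactly $t^{1}$. This term is absorbed by halving $b$ and increasing $\omega$. Linearity in $\abs{X}\abs{Y}$ follows by applying the estimate to a basis of the finite-dimensional spaces $U_\lambda$ and $U_{\lambda'}$.

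I expect the main obstacle to be the precise algebra in the first step: verifying that conjugation by characters, together with the passage from $\beta$-adjoints to $\beta_L$-adjoints, changes $\Lc+\Lc^*$ only modulo $U_{\grado^-}$. This rests on the fact that $\deg$ of a scalar is $0<\deg$ of any element of $\gf$, and it must be checked carefully using Proposition~\ref{prop:8}.
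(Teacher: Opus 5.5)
Your strategy (derive both parts from ter Elst and Robinson and transfer them to $\beta$) is the one the paper relies on, since it only refers to \cite[Theorems 4.8 and 5.4]{BCP}. However, the argument you add has a genuine gap in the absorption step of (2). Young's inequality gives $c\abs{x}_*\meg \frac b2(\abs{x}_*^{\grado}/t)^{1/(\grado-1)}+C't$ only with $C'\asymp c^{\grado}b^{-(\grado-1)}$, and $b$ depends on $\lambda,\lambda'$. So you prove the weaker statement with $\omega=\omega(\lambda,\lambda')$, whereas in the theorem $\omega$ is fixed before $\lambda,\lambda'$. The same problem arises if the constants in the cited kernel bounds depend on the number of derivatives. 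It can be repaired. For $t\meg 1$, any dependence of $\omega$ goes into $C$. For $t\Meg 1$, write $XY^Rh_t=(Y^Rh_\delta)*h_{t-2\delta}*(Xh_\delta)$ with $\delta$ small in terms of the Gaussian constants of the derivative bounds. The short-time factors then only degrade the Gaussian constant of the zeroth-order bound for $h_{t-2\delta}$ by a fixed factor, and one may take $\omega$ equal to the zeroth-order one plus $1$.

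The detour through $\beta_L$ causes further problems. First, the unitary map $L^2(\beta)\to L^2(\beta_L)$ is $f\mapsto\Delta_L^{1/2}f$ (as $\dd\beta=\Delta_L\,\dd\beta_L$), not $\Delta_L^{-1/2}f$; with the correct map one gets exactly $h_t=\Delta_L^{-1/2}h'_t$. Second, $\beta_L$ does not make the right regular representation unitary; this is the representation whose differential produces the left-invariant operators, and it is unitary on $L^2(\beta_R)$ instead. Ter Elst and Robinson's self-adjointness result is stated for unitary representations and forms with $C^\dag=C$. It therefore does not apply verbatim to an operator that is formally self-adjoint in $L^2(\beta_L)$. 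You can use $\beta_R$ instead, or argue directly: if the closure $A$ of a symmetric $\Lc$ is such that $-A$ generates a $C_0$-semigroup, then $A\subseteq A^*$ with $-A^*$ also a generator, which forces $A=A^*$.

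In fact no conjugation is needed at all. Their theory covers every continuous representation, for instance the right regular one on $L^2(\beta)$, which has norm $\Delta_R(g)^{-1/2}$. The second formula in~\eqref{eq:1} gives $\ee^{-t\Lc}f(x)=\int_G f(xz)h_t(z^{-1})\,\dd\beta_L(z)$. Hence $h_t(x)=K_t(x^{-1})$, with $K_t$ their kernel relative to $\beta_L$, and no character has to be absorbed.

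Finally, the step you call the main obstacle is immediate: $U$ is unitary and preserves $C^\infty_c(G)$, so the $\beta_L$-adjoint of $U\Lc U^{-1}$ is $U\Lc^*U^{-1}$. The substantive verifications are hidden in ``ter Elst and Robinson then show'':
\begin{itemize}
\item the present notion of weighted subcoercivity (via $G_*$) agrees with theirs;
\item their local dimension and modulus are $Q_*$ and (equivalent to) $\abs{\,\cdot\,}_*$;
\item $U_\lambda$ is spanned by products of a minimal basis of weighted length $\meg\lambda$;
\item $C^\infty_c(G)$ is a core for their generator;
\item the right-derivative estimates hold.
\end{itemize}
These are what \cite{BCP} supplies.
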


\subsection{Gaussian Estimates}

\begin{deff}
	In order to simplify the notation, given a measure space $(X,\mi)$, $p\in (0,\infty]$, and a $\mi$-measurable function $f$ on $X$, we shall also write $\norm{f(x)}_{L^p_x(\mi)}$ instead of $\norm{f}_{L^p(\mi)}$ (we are therefore allowing the possibility $\norm{f}_{L^p(\mi)}=+\infty$). This will be particularly useful in the presence of nested $L^p$-norms.
\end{deff}

\begin{deff}
	For every $b,t>0$ and for every $d>1$, define 
	\[
	p_{b,t,d}\colon x\mapsto t^{-Q_*/d} \ee^{-b   (\abs{x}_*^{d}/t)^{1/(d-1)}}
	\]
	and
	\[
	T_{b,t,d}f\coloneqq \abs{f}*p_{b,t,d}
	\]
	for every $\beta$-measurable function $f$ on $G$. 
	We shall simply write $T_{b,t}$ and $p_{b,t}$ instead of $T_{b,t,\grado}$ and $p_{b,t,\grado}$, respectively.
\end{deff}

We now collect some elementary facts. Cf.~\cite[Lemmas 5.5, 5.6, 5.7, 5.8]{BCP} for the relative proofs.

\begin{lem}\label{lem:18}\label{lem:32}\label{lem:3}\label{cor:3}\label{lem:4}\label{lem:4bis}
	Take $b, b'>0$, $\rho\Meg 0$, and $d'\Meg d>1$. Then, the following hold:
	\begin{itemize}
		\item if $b\Meg b'$, then $		p_{b,t,d}\meg \ee^{b'} p_{b',t^{d'/d},d'}$	for every $t>0$;
		
		\item for every $x,y\in G$ and for every $\beta$-measurable function $f$ on $G$,
		\[
		\ee^{-2^{1/(d-1)}b(\abs{y^{-1}x}^d/t)^{1/(d-1)}}(T_{2^{1/(d-1)}b,t,d} f)(y)\meg (T_{b,t,d} f)(x)\meg \ee^{2^{1/(d-1)}b(\abs{y^{-1}x}^d/t)^{1/(d-1)}} (T_{2^{-1/(d-1)}b,t,d} f)(y);
		\]
		
		\item there is a constant $C>0$ such that
		\[
		\norm*{ p_{b,t,d}(x)\ee^{\rho \abs{x}_*}}_{L^p_x(\beta)}\meg C t^{-Q_*/(d p')}\ee^{C t}
		\]
		for every $t>0$ and for every $p\in [1,\infty]$;
		
		\item the mapping $(0,+\infty)\ni t \mapsto \ee^{\rho\abs{\,\cdot\,}_*}p_{t,b,d}\in L^p( \beta)$ is continuous for every $p\in [1,\infty]$;
		
		\item for every $b''\in (0,2^{-1/(d-1)}\min(b,b'))$  there is a constant $C>0$ such that
		\[
		\int_G p_{b,t,d}(x y^{-1}) p_{b',t',d}(y) \ee^{\rho\abs{y}_*}\,\dd \beta(y) \meg C \ee^{C (t+t')} p_{ b'',t+t',d}(x)
		\]
		for every $x\in G$ and for every $t,t'>0$;
		
		\item for every $b''> 2^{1/(d-1)}\max(b,b')$ there is a constant $C>0$ such that
		\[
		\int_G p_{b,t,d}(x y^{-1}) p_{b',t',d}(y) \ee^{-\rho\abs{y}_*}\,\dd \beta(y) \Meg C \ee^{-C(t+t')} p_{ b'',t+t',d}(x)
		\]
		for every $x\in G$ and for every $t,t'>0$.
	\end{itemize}
\end{lem}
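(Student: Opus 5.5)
The plan is to prove the six items one at a time. Throughout I use two standard facts about the control modulus: the triangle inequality $\abs{x}_*\meg\abs{xy^{-1}}_*+\abs{y}_*$, and the estimate $\beta(B(e,r))\meg C\min(r,1)^{Q_*}\ee^{Cr}$, which combines the small- and large-radius volume bounds recalled above. I also use the elementary inequality
\[
(a+b)^{\gamma}\meg 2^{\gamma-1}(a^\gamma+b^\gamma), \qquad \gamma=d/(d-1)\Meg 1,\ a,b\Meg 0,
\]
which is where the factors $2^{\pm1/(d-1)}$ in the statement come from.

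For the first item, set $s=\abs{x}_*$. I compare $t^{-Q_*/d}\ee^{-b(s^d/t)^{1/(d-1)}}$ with $\ee^{b'}t^{-Q_*/d}\ee^{-b'(s^{d'}/t^{d'/d})^{1/(d'-1)}}$; note that the prefactors $t^{-Q_*/d}$ agree because $(t^{d'/d})^{-Q_*/d'}=t^{-Q_*/d}$. Write $u=s/t^{1/d}$, so the exponents become $b\,u^{d/(d-1)}$ and $b'u^{d'/(d'-1)}$. Since $d/(d-1)\Meg d'/(d'-1)$, we have $u^{d/(d-1)}\Meg u^{d'/(d'-1)}$ when $u\Meg1$. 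When $u<1$, the extra factor $\ee^{b'}$ absorbs the difference. Together with $b\Meg b'$, this gives the claim.

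For the second item, write $\abs{y^{-1}x}=r$. Right translation by $y$ versus $x$ moves $z\mapsto zx^{-1}$ to $zy^{-1}$, and by the triangle inequality $\abs{x^{-1}z}\le \abs{y^{-1}z}+r$ or the analogous form coming from~\eqref{eq:1}. Applying the convexity inequality then gives
\[
(\abs{\cdot}+r)^{d/(d-1)}\meg 2^{1/(d-1)}\bigl(\abs{\cdot}^{d/(d-1)}+r^{d/(d-1)}\bigr).
\]
This yields the upper bound pointwise in the integrand. The lower bound follows by exchanging $x$ and $y$ and rescaling $b$.

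For the third item, I split $G$ into the ball $\abs{x}_*\meg1$ and its complement. For $p=\infty$ the bound $t^{-Q_*/d}\ee^{Ct}$ follows by maximizing $\rho s-b(s^d/t)^{1/(d-1)}$ in $s$; the maximum is $\meg Ct$. For $p=1$, I integrate in polar layers using the volume bound. After substituting $s=t^{1/d}u$, this gives $\meg C\ee^{Ct}$ for $t\meg1$, while for $t\Meg 1$ the exponential growth of volume is again absorbed by $\ee^{Ct}$. Interpolating by Hölder between $p=1$ and $p=\infty$ gives the exponent $t^{-Q_*/(dp')}$. Continuity in the fourth item then follows by dominated convergence when $p<\infty$: on compact $t$-intervals the functions are dominated by a fixed $p_{b_0,t_0,d}\ee^{\rho\abs{\cdot}_*}$ with smaller $b_0$, which is integrable by item three. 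When $p=\infty$, continuity is uniform on compacts, and the tails are uniformly small because the Gaussian decay beats $\ee^{\rho s}$.

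The fifth and sixth items are the main obstacle. For the fifth, I use the triangle inequality $\abs{x}\meg\abs{xy^{-1}}+\abs{y}$ together with a sharper convexity bound. By Hölder, minimizing $a^\gamma/t^{\gamma-1}+b^\gamma/t'^{\gamma-1}$ subject to $a+b=\abs{x}$ gives
\[
\frac{(\abs{xy^{-1}})^{\gamma}}{t^{\gamma-1}}+\frac{\abs{y}^{\gamma}}{t'^{\gamma-1}}\Meg \frac{\abs{x}^\gamma}{(t+t')^{\gamma-1}}.
\]
Note that $(\abs x^d/t)^{1/(d-1)}=\abs x^\gamma/t^{\gamma-1}$. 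I spend half of the smaller constant $b$ on this bound, which extracts the factor $\ee^{-b''(\abs x^d/(t+t'))^{1/(d-1)}}$. The remaining halves are used to integrate out $y$ with the weight $\ee^{\rho\abs y}$, estimated via item three. The prefactors $t^{-Q_*/d}t'^{-Q_*/d}$ times an $L^1$-norm of the smaller-time kernel give $\meg\max(t,t')^{-Q_*/d}\meg 2^{Q_*/d}(t+t')^{-Q_*/d}$.

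For the sixth, I restrict the integral to a small ball of $y$'s near the point $\gamma(\theta)$ of a near-geodesic from $e$ to $x$, with $\theta=t'/(t+t')$. On that ball both exponents are controlled by the reverse inequality with constant $2^{1/(d-1)}\max(b,b')$. The ball has radius $\sim\min(t,t')^{1/d}$, so its measure is $\gtrsim\min(t,t')^{Q_*/d}\ee^{-C}$, which cancels the prefactors. The weight $\ee^{-\rho\abs y}$ is bounded below by $\ee^{-\rho\abs x-\rho}$. This gives the loss $\ee^{-C(t+t')}$ only after absorbing $\rho\abs x$ into the Gaussian, and that absorption is the delicate point. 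My fallback there is to enlarge $b''$ and use the slack $b''>2^{1/(d-1)}\max(b,b')$ to absorb $\ee^{-\rho\abs x}$ up to $\ee^{-C(t+t')}$.
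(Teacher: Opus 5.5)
\textbf{There is a genuine gap in item 6.} Your construction needs, for $\theta=t'/(t+t')$, a point $z$ with $\abs{z}_*\meg\theta\abs{x}_*+O(\min(t,t')^{1/d})$ and complementary distance $\meg(1-\theta)\abs{x}_*+O(\min(t,t')^{1/d})$. That is an approximate midpoint (length-space) property of $\abs{\,\cdot\,}_*$. The control modulus does not have this property when directions of degree $>1$ are present, and the paper explicitly allows $\gf_1=\{0\}$.

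For example, take $G=\R$ with $\gf_\lambda=\{0\}$ for $\lambda<2$ and $\gf_\lambda=\R$ for $\lambda\Meg 2$. Then $\abs{x}_*=\abs{x}^{1/2}$ for $\abs{x}\meg 1$, so with $d=4$ one has $p_{b,t,4}(x)=t^{-Q_*/4}\ee^{-b(\abs{x}^2/t)^{1/3}}$ there. Take $t=t'$ and $b=b'$. Once $t^{1/4}\ll\abs{x}_*$, no $z$ satisfies both distance bounds, since they would force $\abs{z}+\abs{x-z}<\abs{x}$. At the natural midpoint $y\approx x/2$ the exponent is $2^{1/3}b(\abs{x}^2/t)^{1/3}$, whereas the right-hand side only allows $2^{-1/3}b''(\abs{x}^2/t)^{1/3}$. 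This forces $b''\Meg 2^{2/3}b$, while the statement allows every $b''>2^{1/3}b$. By concavity of $s\mapsto s^{2/3}$ the best $y$ is actually an endpoint, and in this example the threshold $2^{1/(d-1)}\max(b,b')$ is sharp.

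\textbf{The fix is to use only endpoints}, which is exactly where the factor $2^{1/(d-1)}$ comes from. Let $\gamma=d/(d-1)$ and $r=\min(t,t',1)^{1/d}$, so that $r^\gamma\min(t,t')^{1-\gamma}\meg 1$.

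If $t\Meg t'$, integrate over $y\in B(e,r)$. Then $\abs{xy^{-1}}_*\meg\abs{x}_*+r$, $\abs{y}_*\meg r$, and $t^{1-\gamma}\meg 2^{\gamma-1}(t+t')^{1-\gamma}$.

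If $t<t'$, integrate over $y\in B(e,r)x$. Then $\abs{xy^{-1}}_*<r$, $\abs{y}_*\meg\abs{x}_*+r$, $t'^{1-\gamma}\meg 2^{\gamma-1}(t+t')^{1-\gamma}$, and $\beta(B(e,r)x)=\Delta_R(x)\beta(B(e,r))$.

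In both cases, use $(a+r)^\gamma\meg(1+\eps)^{\gamma-1}a^\gamma+(1+1/\eps)^{\gamma-1}r^\gamma$ with $\eps$ small. The crude constant $2^{\gamma-1}$ would cost a second factor $2^{1/(d-1)}$. Then absorb $\ee^{-\rho\abs{x}_*}$ and $\Delta_R(x)\Meg\ee^{-c\abs{x}_*}$ into the leftover slack $\sigma\abs{x}_*^\gamma\tau^{1-\gamma}$, $\tau=t+t'$, via $\sup_{s\Meg 0}(As-\sigma s^\gamma\tau^{1-\gamma})=C_{A,\sigma}\tau$. The step you flagged as delicate is therefore just the maximization from item 3.

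\textbf{Items 1--5} are otherwise sound, but item 5 needs two small corrections. First, spend the fraction $b''/\min(b,b')$ of $\min(b,b')$ on your (correct, and sharper than needed) H\"older bound rather than one half, since for $d>2$ admissible $b''$ exceed $\min(b,b')/2$. Second, $\int_G F(xy^{-1})\,\dd\beta(y)=\Delta_R(x)\int_G F(y^{-1})\,\dd\beta(y)$. So when $t<t'$ the $L^1$-norm of the smaller-time kernel depends on $x$, unless you first absorb $\Delta_R(y)\meg\ee^{c\abs{y}_*}$ into $\ee^{\rho\abs{y}_*}$ and integrate against $\beta_R$.
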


%
%
%

%
%
\begin{deff}
	Take $ b>0$, $\omega\in \R$, and $d>1$. Then, we define  
	\[
	T^{\omega}_{b,*,d} f(x) \coloneqq  \sup_{t\in (0,\kappa]}  \ee^{-t\omega}(T_{b,t,d} f)(x)
	\]
	for every $\beta$-measurable function $f$ on $G$ and for every $x\in G$. 
	
	We shall generally omit $d$ if it is $\grado$.
\end{deff}

%
%
%
%
%

\subsection{The `Schwartz Space'}

\begin{deff}\label{def:1}
	We define $\Sc(G)$ as the space of $f\in C^\infty(G)$ such that the seminorms $\norm{\ee^{c \abs{\,\cdot\,}_*} X^R f}_{L^1(\beta)}$, $c>0$, $X\in U(G)$, are finite, endowed with the corresponding topology.
	
	We denote with $\Sc'(G)$ the dual of $\Sc(G)$, endowed with the topology of uniform convergence on the bounded subsets of $\Sc(G)$.
\end{deff}

Observe that, by the arbitrariness of $c$, one may replace $\beta$ with $\beta_L$ in Definition~\ref{def:1}. Consequently, the results in~\cite{Schweitzer} are applicable in this context.

Cf.~\cite[Theorem 4.16, Corollary 4.17, and Lemma 5.13]{BCP} for a proof of the following results.

\begin{teo}\label{teo:8}
	The following hold:
	\begin{enumerate}
		\item[\textnormal{(1)}] $\Sc(G)$ is a nuclear Fréchet space;
		
		\item[\textnormal{(2)}] $\Sc(G)$ is reflexive;
		
		\item[\textnormal{(3)}] the bounded subsets of $\Sc(G)$ are relatively compact;
		
		\item[\textnormal{(4)}] $\Sc(G)$ is a Fréchet $*$-algebra under convolution and under pointwise multiplication;
		
		\item[\textnormal{(5)}] for every $p\in [1,\infty]$, $\Sc(G)$ is the space of $f\in C^\infty(G)$ such that the seminorms  $\norm{\ee^{c \abs{\,\cdot\,}_*} X Y^R f}_{L^p(\beta)}$, $c>0$, $X,Y\in U(G)$ (resp.\ $\norm{\ee^{c \abs{\,\cdot\,}_*} X   f}_{L^p(\beta)}$, $c>0$, $X\in U(G)$; $\norm{\ee^{c \abs{\,\cdot\,}_*} X^R f}_{L^p(\beta)}$, $c>0$, $X\in U(G)$), are finite, and has the corresponding topology;
		
		\item[\textnormal{(6)}] 	$\Sc'(G)$ is a complete, reflexive, bornological, and nuclear space.  The bounded subsets of $\Sc'(G)$ are relatively compact.
	\end{enumerate} 
\end{teo}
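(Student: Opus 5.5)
The plan is to prove (5) first, since it gives the flexibility needed for everything else. Then (1) follows by a countability argument together with an appeal to~\cite{Schweitzer}, (2), (3) and (6) are abstract consequences of nuclearity, and (4) is a direct computation.

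\emph{Step 1: (5).} I use two facts.
\begin{itemize}
\item For $X\in U(G)$ we can write $X^R=\sum_k c_k X_k$, where $(X_k)$ is a basis of $U_\lambda$ (with $\lambda$ large enough that it contains $\operatorname{Ad}(x^{-1})X$ for every $x$) and the $c_k$ are matrix coefficients of $\operatorname{Ad}(x^{-1})$ on $U_\lambda$. These coefficients satisfy $\abs{c_k(x)}\meg C\ee^{C\abs{x}_*}$. The same holds with the roles of $X$ and $X^R$ exchanged.
\item The characters $\Delta_L,\Delta_R$ are bounded by $\ee^{C\abs{\,\cdot\,}_*}$.
\end{itemize}
Since $\abs{xy}_*\meg \abs{x}_*+\abs{y}_*$, the weights $\ee^{c\abs{\,\cdot\,}_*}$ are comparable to constants on unit balls. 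The arbitrariness of $c$ therefore absorbs all these exponential factors, and the three families of seminorms are equivalent for any fixed $p$.

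To change $p$, I use a local Sobolev embedding applied on $B(x,1)$, transported by left translation:
\[
\abs{f(x)}\meg C\sum_{X} \norm{Xf}_{L^1(B(x,1),\beta)}.
\]
Multiplying by $\ee^{c\abs{x}_*}$ bounds the weighted $L^\infty$ norms by the weighted $L^1$ norms. Interpolation then controls the weighted $L^p$ norms. In the other direction, Hölder's inequality together with $\ee^{-\abs{\,\cdot\,}_*}\in L^{p'}(\beta)$ (which holds because $\beta(B(e,r))\meg \ee^{Cr}$) bounds $L^1$ norms with weight $c$ by $L^p$ norms with weight $c+1$.

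\emph{Step 2: (1), then (2), (3), (6).} Restricting to $c\in\N$ and to $X$ in a basis of each $U_k$ gives a countable family of seminorms. Completeness is standard: a Cauchy sequence converges in $C^\infty$ together with all its weighted derivatives. For nuclearity, the remark after Definition~\ref{def:1} lets us replace $\beta$ by $\beta_L$, and then I invoke~\cite{Schweitzer}. A self-contained alternative is to write $\Sc(G)$ as a projective limit of weighted Hilbert Sobolev spaces built on $L^2(\ee^{2c\abs{\,\cdot\,}_*}\beta)$ (using $p=2$ from Step 1) and show that the connecting inclusions are Hilbert--Schmidt, by combining local Sobolev embedding with the spare exponential weight. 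This nuclearity step is the one I expect to be the main obstacle, so I would rely on~\cite{Schweitzer} for it.

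Given (1), the rest is abstract. A nuclear Fréchet space is Montel, which gives (3) and the reflexivity in (2). The strong dual of a Fréchet--Montel nuclear space is complete, reflexive, bornological and nuclear, and its bounded sets are relatively compact; this is (6).

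\emph{Step 3: (4).} For convolution I use $X^R Y(f*g)=(X^Rf)*(Yg)$ and the pointwise bound $\ee^{c\abs{x}_*}\meg \ee^{c\abs{xy^{-1}}_*}\ee^{c\abs{y}_*}$. Young's inequality, with the modular functions absorbed into the exponential weights, then bounds every seminorm of $f*g$ by products of seminorms of $f$ and $g$, so convolution is continuous. For pointwise multiplication I apply Leibniz's rule to $X(fg)$ and estimate one factor in weighted $L^\infty$ and the other in weighted $L^1$, using (5). The involution is $f\mapsto \Delta\,\overline{\check f}$ for the appropriate product $\Delta$ of powers of $\Delta_L,\Delta_R$. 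It is continuous by Proposition~\ref{prop:9}(3), which converts $X$ into $X^R$ under $f\mapsto\check f$, together with the exponential bounds on $\Delta_L,\Delta_R$.
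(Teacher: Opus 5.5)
Your proposal is correct and follows essentially the route the paper takes. The paper only cites \cite{BCP}, after observing that the arbitrariness of $c$ lets one replace $\beta$ by $\beta_L$ so that \cite{Schweitzer} applies; that is exactly how you obtain nuclearity, with (2), (3) and (6) following as standard consequences for nuclear Fréchet spaces and their strong duals, and (4) and (5) from the routine weighted estimates you describe.

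There are two small slips, both absorbed by the arbitrariness of $c$. First, $\ee^{-\abs{\,\cdot\,}_*}$ need not lie in $L^{p'}(\beta)$ when the exponential growth rate of $\beta(B(e,r))$ is at least $p'$, so use $\ee^{-C'\abs{\,\cdot\,}_*}$ with $C'$ large instead. Second, your left-translated Sobolev inequality, written with respect to $\beta$, carries an extra factor $\Delta_L(x)^{-1}$.
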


\subsection{Lattices}

\begin{deff}
	Take $\delta>0$ and $R\Meg 2$. We say that a family $(x_j)_{j\in J}$ of elements of $G$ is a $(\delta,R)$-lattice if the $B(x_j,\delta)$ are pairwise disjoint, while the $B(x_j,R\delta)$ cover $G$.
\end{deff}

It is clear that any maximal $2\delta$-separated family of elements of $G$ is a $(\delta,2)$-lattice, so that existence of $(\delta,R)$-lattices is ensured. It is nonetheless convenient, for technical reasons, to allow for `coarser' lattices (i.e., for the case $R>2$).  Cf.~\cite[Lemma 4.3]{Calzi2} for a proof of the following result.

\begin{lem}\label{lem:50}
	Take $\delta_0>0$ and $R_0\Meg 2$. Then, there is $N\in\N$ such that
	\[
	1\meg \sum_{j\in J} \chi_{B(x_j,R\delta)}\meg N
	\]
	for every $(\delta,R)$-lattice $(x_j)_{j\in J}$ on $G$ with $\delta\meg \delta_0$ and $2\meg R\meg R_0$. In particular, it is possible to find a partition $J_1,\dots, J_N$ of $J$ such that $d(x_j,x_{j'})\Meg  R\delta$ for every two distinct $j,j'\in J_h$, and for every $h=1,\dots, N$.
\end{lem}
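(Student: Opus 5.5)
The plan is a volume-counting argument, with the uniformity in $\delta\meg\delta_0$ and $2\meg R\meg R_0$ coming from local doubling of $\beta$ at bounded scales. Throughout I will use the left-invariant measure $\beta_L$, which is legitimate because $\sum_{j}\chi_{B(x_j,R\delta)}$ does not depend on the measure at all.

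The lower bound $1\meg\sum_j\chi_{B(x_j,R\delta)}$ is immediate, since the balls $B(x_j,R\delta)$ cover $G$.

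For the upper bound, fix $x\in G$ and let $J_x$ be the set of $j\in J$ with $x\in B(x_j,R\delta)$. For $j\in J_x$ the triangle inequality gives $B(x_j,\delta)\subseteq B(x,(R+1)\delta)$. These balls are pairwise disjoint, and by left invariance each has $\beta_L$-measure $\beta_L(B(e,\delta))$. Therefore
\[
\card(J_x)\,\beta_L(B(e,\delta))\meg \beta_L(B(e,(R+1)\delta))\meg \beta_L(B(e,(R_0+1)\delta)).
\]
It remains to bound the ratio $\beta_L(B(e,(R_0+1)\delta))/\beta_L(B(e,\delta))$ uniformly for $\delta\in(0,\delta_0]$. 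For this I use $\beta_L(B(e,r))\asymp r^{Q_*}$ as $r\to0^+$. The stated asymptotic is for $\beta$, but $\Delta_L$ is continuous and bounded above and below on $B(e,1)$, so it transfers to $\beta_L$. Hence there is $r_0>0$ such that the ratio is at most $C(R_0+1)^{Q_*}$ whenever $(R_0+1)\delta\meg r_0$. On the compact range $\delta\in[r_0/(R_0+1),\delta_0]$, the numerator is bounded above (balls are relatively compact) and the denominator is bounded below by $\beta_L(B(e,r_0/(R_0+1)))>0$, since open sets have positive Haar measure. This yields a uniform $N$, and this uniformity over the full range $\delta\meg\delta_0$ is the only delicate point of the argument.

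For the partition, build the graph on $J$ whose edges join $j\neq j'$ with $d(x_j,x_{j'})<R\delta$. If $j'$ is adjacent to $j$, then $x_{j'}\in B(x_j,R\delta)$, so $x_j\in B(x_{j'},R\delta)$, i.e.\ $j\in J_{x_{j'}}$; hence each vertex has degree at most $\card(J_{x_{j'}})-1\meg N-1$. A graph of maximum degree $N-1$ can be properly coloured with $N$ colours: greedily, or via Zorn's lemma, or by compactness, since $J$ is countable because $G$ is $\sigma$-compact and the balls $B(x_j,\delta)$ are disjoint and open. The colour classes $J_1,\dots,J_N$ then satisfy $d(x_j,x_{j'})\Meg R\delta$ for distinct $j,j'$ in the same class. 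Some classes may be empty, which is harmless, or $N$ can simply be enlarged.
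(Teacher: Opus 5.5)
Your proof is correct. Packing the disjoint balls $B(x_j,\delta)$, $j\in J_x$, into $B(x,(R+1)\delta)$, together with $\beta_L(B(e,r))\asymp r^{Q_*}$ near $0$ and the two-sided bound on the compact range of $\delta$, gives a uniform $N$. The $N$-colouring of the graph joining pairs with $d(x_j,x_{j'})<R\delta$ then yields the partition, since that graph has maximum degree at most $N-1$. The paper does not reprove this lemma; it only refers to \cite[Lemma 4.3]{Calzi2}, so no in-paper proof is available for comparison, but your volume-counting argument followed by greedy extraction of $R\delta$-separated subfamilies is the standard proof of this kind of statement.
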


Notice that every $(\delta,R)$-lattice is also a $(\delta',R')$-lattice for every $\delta'\meg \delta$ and for every $R'\Meg R\delta/\delta'$, so that the above assertion gives information also on the possible overlaps of balls centred at the $x_j$ and of arbitrarily large (but fixed) radii.

\subsection{Supplementary Lemmas}

\begin{deff}
	We denote with $\Mc_\Car$ (`Carleson measures') the space of positive Radon measures $\mi$ on $(0,+\infty)$ with bounded support such that there is a constant $C>0$ such that
	\[
	\mi((r,2r])\meg C
	\]
	for every $r>0$.
	
	We denote with $\Mc_\RC$  (`reverse Carleson measures') the space of positive Radon measures $\mi$ on $(0,+\infty)$ such that there are constants $\eps,C\in (0,1)$   such that
	\[
	\mi((\eps r, r])\Meg C
	\]
	for every $r\in (0,\eps]$.
	
	We define $\Mc_\Samp\coloneqq \Mc_\Car\cap \Mc_\RC$  (`sampling measures').
	
	Finally, for every $\kappa>0$ we define $\mi_\kappa$ as the Radon measure on $(0,+\infty)$  supported in $(0,\kappa]$ and such that $\dd\mi_\kappa(t)=\frac{\dd t}{t}$ on $(0,\kappa]$.
\end{deff}
%
%
%

Cf.~\cite[Lemmas 6.4 and 6.6]{BCP} for a proof of the following results.

\begin{lem}\label{lem:25}
	Take $\eta,\gamma>0$,  $\mi,\nu\in \Mc_\Car$. Then, there is a constant $C>0$ such that
	\[
	\norm*{\int_0^\infty \frac{s^\gamma t^{\eta}}{(s+t)^{\eta+\gamma}} \abs{f(t)}\,\dd \nu(t)  }_{L^q_s(\mi)} \meg C \norm{f}_{L^q(\nu)}
	\]
	for every  $q\in [1,\infty]$ and for every   $\nu$-measurable function $f$.
\end{lem}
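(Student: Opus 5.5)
The estimate is a Schur-type bound, and I would obtain it by duality for $q\in(1,\infty)$, with direct arguments at the endpoints $q=1$ and $q=\infty$. Set $K(s,t)\coloneqq s^\gamma t^\eta/(s+t)^{\eta+\gamma}$, and assume $f\Meg 0$ (replace $f$ by $\abs{f}$). The key ingredient is a uniform bound on the partial integrals of $K$ against Carleson measures. Namely, I would show that for every $\lambda\in\Mc_\Car$ there is $C_\lambda$ such that
\[
\sup_{s>0}\int_0^\infty K(s,t)\,\dd\lambda(t)\meg C_\lambda
\qquad\text{and}\qquad
\sup_{t>0}\int_0^\infty K(s,t)\,\dd\lambda(s)\meg C_\lambda .
\]
To see this, fix $s$ and split $(0,\infty)$ into dyadic shells $(2^k s,2^{k+1}s]$, $k\in\Z$. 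On a shell with $k\Meg 0$ we have $K(s,t)\meg (s/t)^\gamma\meg 2^{-k\gamma}$. On a shell with $k<0$ we have $K(s,t)\meg (t/s)^\eta\meg 2^{(k+1)\eta}$. Each shell carries $\lambda$-mass at most $C$ by the definition of $\Mc_\Car$. Summing the two geometric series gives a bound depending only on $\gamma$, $\eta$ and the Carleson constant of $\lambda$. The second bound follows symmetrically, since $K(s,t)$ has the same form with the roles of $(s,\gamma)$ and $(t,\eta)$ exchanged.

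With these bounds in hand, the case $q=\infty$ is immediate. For each $s$,
\[
\int_0^\infty K(s,t) f(t)\,\dd\nu(t)\meg \norm{f}_{L^\infty(\nu)}\int_0^\infty K(s,t)\,\dd\nu(t)\meg C_\nu\norm{f}_{L^\infty(\nu)},
\]
and then we take the supremum over $s$, or the $\mi$-essential supremum. For $q=1$ I would use Tonelli:
\[
\int_0^\infty\int_0^\infty K(s,t)f(t)\,\dd\nu(t)\,\dd\mi(s)=\int_0^\infty f(t)\int_0^\infty K(s,t)\,\dd\mi(s)\,\dd\nu(t)\meg C_\mi\norm{f}_{L^1(\nu)} .
\]

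For $1<q<\infty$ I would apply Hölder's inequality to $K f=K^{1/q'}\cdot K^{1/q}f$ in $L^1(\nu)$. This gives
\[
\Big(\int_0^\infty K(s,t)f(t)\,\dd\nu(t)\Big)^q\meg C_\nu^{q/q'}\int_0^\infty K(s,t)f(t)^q\,\dd\nu(t).
\]
Integrating in $\mi$ and using Tonelli together with the second uniform bound, we get the inequality with constant $C_\nu^{1/q'}C_\mi^{1/q}\meg\max(C_\mi,C_\nu)$. In particular the constant is uniform in $q$, as the statement requires.

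The only delicate point is the uniform kernel bound; the rest is the standard Schur test. Note that bounded support of $\mi$ and $\nu$ is not even needed here, because the kernel decays geometrically on both sides of the diagonal in the dyadic scale.
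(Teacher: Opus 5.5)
Your argument is correct. The dyadic-shell estimates give $\sup_s\int K(s,t)\,\dd\nu(t)<\infty$ and $\sup_t\int K(s,t)\,\dd\mi(s)<\infty$ from the Carleson condition, and the Schur test (H\"older plus Tonelli) then yields the inequality with a constant uniform in $q$. The paper does not reprove this lemma but refers to \cite[Lemma 6.4]{BCP}; the way that proof is reused in Lemma~\ref{lem:25b}, via uniform bounds for $\int_0^\infty(\sqrt{t's}/(t'+s))^\eta\,\dd\mi(s)$, indicates it follows essentially the same Schur-type route.
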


\begin{lem}\label{lem:25bis}
	Take $\eta,\gamma>0$ and  let $\mi$ be a Haar measure on $(0,+\infty)$. Take $\nu\in \Mc_\Car$, and assume that either $\eta\Meg 1$ or $\nu\in L^\infty(\mi)\cdot \mi$.	
	Then, there is a constant $C>0$ such that
	\[
	\norm*{\int_0^\infty t^\eta s^\gamma \abs{f(s+t)}\,\dd \mi(t)  }_{L^q_s(\nu)} \meg C \norm{ t^{\gamma+\eta}f(t)}_{L^q_t(\mi)}
	\]
	for every  $q\in [1,\infty]$ and for every  $\mi$-measurable function $f$.
\end{lem}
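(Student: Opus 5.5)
The plan is to reduce the statement to a Schur-type test for a positive integral operator acting from $L^q(\mi)$ to $L^q(\nu)$. Since $\mi$ is a Haar measure on $(0,+\infty)$, after normalizing we may assume $\dd\mi(t)=\frac{\dd t}{t}$. Put $g(u)\coloneqq u^{\gamma+\eta}\abs{f(u)}$, so that the right-hand side is $C\norm{g}_{L^q(\mi)}$. The change of variables $u=s+t$ gives
\[
\int_0^\infty t^\eta s^\gamma \abs{f(s+t)}\,\dd\mi(t)=\int_s^\infty K(s,u) g(u)\,\frac{\dd u}{u},\qquad K(s,u)\coloneqq (u-s)^{\eta-1}u^{1-\gamma-\eta}s^\gamma\,\chi_{\Set{u>s}}.
\]
We then use the elementary Schur lemma for positive kernels and two measures. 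Suppose
\[
\sup_s\int K(s,u)\,\dd\mi(u)\meg A \qquad\text{and}\qquad \sup_u\int K(s,u)\,\dd\nu(s)\meg B .
\]
Then $g\mapsto\int K(\,\cdot\,,u)g(u)\,\dd\mi(u)$ is bounded from $L^q(\mi)$ to $L^q(\nu)$ with norm at most $A^{1/q'}B^{1/q}$, uniformly in $q\in[1,\infty]$. This follows from H\"older's inequality applied to $K=K^{1/q'}K^{1/q}$, together with Fubini. It therefore suffices to verify the two bounds on $K$.

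\emph{Case $\eta\Meg 1$.} Here $(u-s)^{\eta-1}\meg u^{\eta-1}$, so $K(s,u)\meg (s/u)^\gamma\chi_{\Set{u>s}}$. For the first bound, $\int_s^\infty (s/u)^\gamma\frac{\dd u}{u}=1/\gamma$. For the second, split $(0,u]$ into the dyadic intervals $(2^{-k-1}u,2^{-k}u]$, $k\Meg 0$. The Carleson condition $\nu((r,2r])\meg C$ gives
\[
\int_{(0,u]}(s/u)^\gamma\,\dd\nu(s)\meg C\sum_{k\Meg0}2^{-k\gamma}<\infty .
\]
This bound is uniform in $u$, which concludes this case.

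\emph{Case $\nu=h\cdot\mi$ with $h\in L^\infty(\mi)$, $\eta<1$.} Now $K$ has an integrable singularity on the diagonal $u=s$, so we split the kernel into two regions.
\begin{itemize}
\item On $\Set{u\Meg 2s}$ we have $u-s\Meg u/2$, hence $(u-s)^{\eta-1}\meg 2^{1-\eta}u^{\eta-1}$ and again $K\meg 2^{1-\eta}(s/u)^\gamma$. This part is handled exactly as in the previous case; it only uses the Carleson property of $\nu$.
\item On $\Set{s<u<2s}$ we have $s\asymp u$, so $K(s,u)\asymp (u-s)^{\eta-1}u^{1-\eta}$.
\end{itemize}
For the diagonal region, the first Schur bound becomes
\[
\int_s^{2s}(u-s)^{\eta-1}u^{1-\eta}\frac{\dd u}{u}\meg s^{-\eta}\int_0^{s}v^{\eta-1}\,\dd v=\frac1\eta .
\]
For the second Schur bound we use the bounded density of $\nu$:
\[
\int_{u/2}^u (u-s)^{\eta-1}u^{1-\eta}h(s)\frac{\dd s}{s}\meg 2\norm{h}_\infty u^{-\eta}\int_0^{u/2}v^{\eta-1}\,\dd v\meg \frac{2\norm{h}_\infty}{\eta}.
\]
This explains why the absolute continuity assumption is needed when $\eta<1$. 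A general Carleson measure could concentrate near the singularity $s=u$, and the $\nu$-integral of $(u-s)^{\eta-1}$ could then diverge. Adding the two regions gives the two Schur bounds and hence the claim.

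The main point to check carefully is that the Schur lemma holds with constants independent of $q$, including the endpoints $q=1$ and $q=\infty$. The endpoints are immediate: $q=\infty$ uses only $A$ and $q=1$ uses only $B$. The remaining verifications are the one-dimensional integrals computed above.
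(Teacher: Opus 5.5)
Your proof is correct. The substitution $u=s+t$ gives exactly the kernel $K(s,u)=(u-s)^{\eta-1}u^{1-\gamma-\eta}s^\gamma\chi_{\{u>s\}}$ against $\frac{\dd u}{u}$. The Schur bound $A^{1/q'}B^{1/q}\meg\max(A,B)$ gives the required uniformity in $q$, and rescaling the Haar measure only costs a factor $c^{1-1/q}\meg\max(1,c)$. Both kernel estimates check out, and you use the hypothesis $\nu\in L^\infty(\mu)\cdot\mu$ precisely where it is needed: for the $\nu$-integral of the diagonal singularity $(u-s)^{\eta-1}$ when $\eta<1$.

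The paper does not prove this lemma itself; it quotes it from \cite[Lemma 6.6]{BCP}, so there is no in-text proof to compare against. Your Schur-type argument, with the Carleson bounds obtained by dyadic decomposition, is the same kind of kernel estimate that the paper borrows from the proof of \cite[Lemma 6.4]{BCP} at the end of the proof of Lemma~\ref{lem:25b}. It therefore serves as a self-contained substitute for the citation.
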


\subsection{Besov and Triebel-Lizorkin Spaces}

\begin{deff}\label{def:2}
	Define, for every $k\in \N$, for every $t>0$, and for every $f\in \Sc'(G)$,
	\[
	W_t^{(k)} f\coloneqq (t \Lc)^k \ee^{-t \Lc} f.
	\]
	In addition, define,  for every $\alpha\in\R$ and for every $\eps\in (0,1]$,
	\[
	W_{t,\eps}^{(\alpha),*} f\coloneqq t^\alpha \max_{s\in [\eps t,t/\eps ]} \max_{\substack{\deg(X)+\deg(Y)\meg \alpha \grado \\ \abs{X} ,\abs{Y}\meg 1}} \abs{X \ee^{-s \Lc} Y f }.
	\]
\end{deff}

\begin{deff}
	For every $\alpha \in \R$, for every $m\in\N$ with $m>\alpha/\grado$, for every $\mi\in \Mc_\Samp$ and for every $p,q\in [1,\infty]$,  define
	\[
	\Bs^{p,q}_{\alpha,m,\mi}(f)\coloneqq \norm{   t^{-\alpha/\grado} \norm{ W_t^{(m)} f }_{L^p(\beta)}  }_{L^q_t(\mi)},
	\]
	and, if $p\in (1,\infty)$,
	\[
	\Fs^{p,q}_{\alpha,m,\mi}(f)\coloneqq \norm*{ \norm{ t^{-\alpha/\grado}  W_t^{(m)} f(x)  }_{L^q_t(\mi_1)}}_{L^p_x(\beta)} ,
	\]
	Then, define, for every $f\in \Sc'(G)$,
	\[
	\norm{f}_{B^{p,q}_\alpha(\beta)}\coloneqq \norm{\ee^{-\Lc} f}_{L^p(\beta)}+ \Bs^{p,q}_{\alpha,([\alpha/\grado]+1)_+,\mi_1}(f),
	\]
	and, if $p\in (1,\infty)$,
	\[
	\norm{f}_{F^{p,q}_\alpha(\beta)}\coloneqq \norm{\ee^{-\Lc} f}_{L^p(\beta)}+ \Fs^{p,q}_{\alpha,([\alpha/\grado]+1)_+,\mi_1}(f).
	\] 
	We define $B^{p,q}_\alpha(\beta)$ and $F^{p,q}_\alpha(\beta)$ accordingly.
\end{deff}

By~\cite[Proposition 6.9]{BCP}, the norms $\norm{\ee^{-\Lc} \,\cdot\,}_{L^p(\beta)}+\Bs^{p,q}_{\alpha,m,\mi}$ are all equivalent on $\Sc'(G)$, as well as the norms $\norm{\ee^{-\Lc} \,\cdot\,}_{L^p(\beta)}+\Fs^{p,q}_{\alpha,m,\mi}$. Hence, they may be used to define the spaces $B^{p,q}_\alpha(\beta)$ and $F^{p,q}_\alpha(\beta)$, respectively.

Let us also recall the following simple result (cf.~\cite[Proposition 7.4]{BCP}) which will be often useful to reduce to the case in which $\beta$ is left invariant.

\begin{prop}\label{prop:5}
	Suppose $\alpha\in \R$ and $p,q\in [1,\infty]$. Then 
	\begin{enumerate}
		\item[\textnormal{(i)}]  $B^{p,q}_\alpha(\beta)= \Delta_L^{-1/p} B^{p,q}_\alpha(\beta_L)$, and
		\item[\textnormal{(ii)}]  $F^{p,q}_\alpha(\beta)= \Delta_L^{-1/p} F^{p,q}_\alpha(\beta_L)$  if $p\in (1,\infty)$.
	\end{enumerate}
\end{prop}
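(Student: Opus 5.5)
The plan is to realize multiplication by the positive character $\chi\coloneqq\Delta_L^{1/p}$ as an intertwiner between the heat semigroup of $\Lc$ and that of a second weighted subcoercive operator $\Lc'$. The norms on the $\beta$ side then become norms on the $\beta_L$ side built from $\Lc'$, and the independence of the spaces from the chosen operator (\cite{BCP}) finishes the argument. The basic identity is
\[
\norm{\chi^{-1} g}_{L^p(\beta)}=\norm{g}_{L^p(\beta_L)}
\]
for every measurable $g$, which holds because $\beta=\Delta_L\cdot\beta_L$. It therefore suffices to show that $f\mapsto \chi f$ maps $B^{p,q}_\alpha(\beta)$ onto $B^{p,q}_\alpha(\beta_L)$ with equivalent norms, and likewise for $F^{p,q}_\alpha$ when $p\in(1,\infty)$.

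\textbf{Step 1: conjugating $\Lc$.} By Proposition~\ref{prop:9}(5), for every $X\in\gf$ one has $X(\chi\phi)=\chi\,(X+c_X)\phi$ with $c_X\coloneqq(X\chi)(e)\in\R$. Multiplying out, $\Lc'\coloneqq \chi\Lc\chi^{-1}$ is again a left-invariant differential operator. Each factor $X$ of degree $\deg X\Meg 1$ is replaced by $X-c_X$, and the constant $c_X$ has degree $0$, so $\Lc'-\Lc\in U_{\grado^-}$. The same holds for the formal adjoints: these change when passing from $\beta$ to $\beta_L$, but by Proposition~\ref{prop:9}(1) only by conjugation with characters, hence again only by terms in $U_{\grado^-}$. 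Consequently $\Lc'+\Lc'^*+U_{\grado^-}=\Lc+\Lc^*+U_{\grado^-}$ is a positive Rockland operator on $G_*$, so $\Lc'$ is weighted subcoercive with respect to $\beta_L$.

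\textbf{Step 2: intertwining the semigroups.} I would show that $\ee^{-t\Lc'}(\chi f)=\chi\,\ee^{-t\Lc}f$, first for $f\in C^\infty_c(G)$. The family $\chi\ee^{-t\Lc}\chi^{-1}$ solves the heat equation for $\Lc'$. Moreover its kernel is the heat kernel $h_t$ twisted by characters, and by Theorem~\ref{teo:7}(2) together with Lemma~\ref{lem:18} it still has Gaussian bounds, since $\abs{\log\chi(x)}\meg C\abs{x}_*$ is dominated by the Gaussian decay. Uniqueness of the semigroup generated by $\Lc'$ then identifies the two families. I would prove this uniqueness on $C^\infty_c(G)$ by a duality argument against $\ee^{-(t-s)\Lc'^*}$. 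The identity then extends to $\Sc'(G)$ by duality, since $\chi$ and $\chi^{-1}$ act continuously on $\Sc(G)$ (the seminorms contain arbitrary weights $\ee^{c\abs{\cdot}_*}$).

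\textbf{Step 3: comparing norms.} Step 2 also gives $(t\Lc')^m\ee^{-t\Lc'}(\chi f)=\chi\,W_t^{(m)}f$. Combined with the identity above, this yields $\norm{\ee^{-\Lc'}\chi f}_{L^p(\beta_L)}=\norm{\ee^{-\Lc}f}_{L^p(\beta)}$ and the corresponding equality for $\Bs^{p,q}_{\alpha,m,\mi}$. For $\Fs^{p,q}_{\alpha,m,\mi}$ the same holds, because the factor $\chi(x)$ is independent of $t$ and passes through the inner $L^q_t$ norm. The norms on $B^{p,q}_\alpha(\beta_L)$ and $F^{p,q}_\alpha(\beta_L)$ built from $\Lc'$ are equivalent to those built from any fixed operator, by the independence of the operator established in \cite{BCP}. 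This gives (i) and (ii).

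The main obstacle is Step 2. One must handle the fact that the action of left-invariant operators on distributions depends on the measure through $X^\dag$. One must also verify that the conjugated semigroup is exactly the one generated by the closure of $\Lc'$ in $L^2(\beta_L)$, and not merely some solution of the same heat equation. I would first try to settle both points on $C^\infty_c(G)$ via the uniqueness argument above, and only afterwards extend to $\Sc'(G)$.
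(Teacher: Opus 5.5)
Your argument is correct and is the natural one behind this result, which the paper does not reprove but quotes from \cite[Proposition~7.4]{BCP}. Conjugating by the character $\chi=\Delta_L^{1/p}$ turns $\Lc$ into a weighted subcoercive operator $\Lc'$ whose heat semigroup is $\chi\,\ee^{-t\Lc}\chi^{-1}$; the $L^p$ and mixed norms then transform exactly, and independence of the spaces from the operator finishes the proof.

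The one point to settle in the extension to $\Sc'(G)$, which is the obstacle you flag, is the following. With $f\in L^1_\loc$ identified with $f\cdot\beta$ on one side and with $f\cdot\beta_L$ on the other, the map $f\mapsto\chi f$ on densities corresponds on distributions to multiplication by $\Delta_L^{1/p-1}$, not by $\chi$. Literal multiplication by $\chi$ on $\Sc'(G)$ does not intertwine the two semigroups. Indeed, $\chi\,(u*(h_t\cdot\beta))=(\chi u)*(\chi h_t\cdot\beta)$, whereas $\ee^{-t\Lc'}$ is convolution with $\chi h_t\cdot\beta_L$, and $\beta=\Delta_L\cdot\beta_L$. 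This density-level correspondence is also how the statement itself must be read.
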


\section{The Spaces $\Cc^{q}(\mi,a)$}\label{sec:3}

In order to define the spaces $F^{\infty,q}_\alpha(\beta)$, we (essentially) follow the definition introduced in~\cite{FrazierJawerth}, partially following the approach of~\cite{Rychkov}. Since in this more general context we do not have access to a suitable `discretization,' we shall need to make some changes and weaken some statements. We consider the case of general $q\in (0,\infty]$ for future reference.

\begin{deff}
	Take $q\in(0,\infty]$, $a>0$, and a positive measure $\mi$ on $(0,+\infty)$ with bounded support. Set $c\coloneqq \max \supp \mi$. Then, we denote with $\Cc^q(\mi,a)$ the space of $(\mi\otimes \beta)$-measurable functions $f\colon (0,+\infty)\times G\to \C$ such that 
	\[
	\sup_{(t,x)\in (0,c]\times G}\norm*{t^{-aQ_*/q} \chi_{(0,t]\times B(x,t^a)} f }_{L^q(\mi\otimes \beta_L)} 
	\]
	is finite, 	endowed with the corresponding topology. We shall simply write $\Cc^q(\mi)$ instead of $\Cc^q(\mi,1)$.
\end{deff}

Clearly, $\Cc^\infty(\mi,a)=L^\infty(\mi\otimes\beta)$ for every $a>0$.

We observe explicitly that the parameter $a>0$ does not appear in~\cite{FrazierJawerth,Rychkov}. We introduced this parameter since it should be naturally chosen as $1$ in an abstract treatment, but when applied to functions of the form $(t,x)\mapsto (W^{(m)}_t g)(x)$ it is more naturally chosen as $1/\grado$ (unless one chooses to deal with the somewhat less natural function $(t,x)\mapsto (W^{(m)}_{t^{\grado}} g)(x)$). When $\mi$ is discrete (and appropriately chosen), these issues do not occur.

In the next few lemmas, we discuss some basic properties of the spaces $\Cc^q(\mi,a)$.

\begin{lem}\label{lem:63}
	Take $q\in(0,\infty]$, $a>0$, $\kappa>1$, and a positive measure $\mi$ on $(0,+\infty)$ with bounded support. Then, there is a constant $C>0$ such that
	\[
	\sup_{(t,x)\in  (0,\kappa c]\times G}\norm*{t^{-aQ_*/q} \chi_{(0,t]\times B(x,\kappa t^a)} f }_{L^q(\mi\otimes \beta_L)}\meg C \norm{f}_{\Cc^q(\mi,a)}
	\]
	for every $(\mi\otimes \beta)$-measurable function $f\colon (0,+\infty)\times G\to \C$, where $c\coloneqq \max \supp \mi$.
\end{lem}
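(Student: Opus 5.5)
We reduce both enlargements, in the radius ($\kappa t^a$ instead of $t^a$) and in the time range ($(0,\kappa c]$ instead of $(0,c]$), to the definition of $\Cc^q(\mi,a)$ by a covering argument. The case $q=\infty$ is trivial: the left-hand side is at most $\norm{f}_{L^\infty(\mi\otimes\beta)}=\norm{f}_{\Cc^\infty(\mi,a)}$. So assume $q<\infty$. The key point is that $\beta_L$ is left-invariant and $d$ is left-invariant. Hence the number of balls of radius $t^a$ needed to cover a ball of radius $\kappa t^a$ is bounded uniformly in the centre, and also uniformly in $t$ as long as $t$ ranges in a bounded set, here $(0,\kappa c]$.

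\emph{Step 1 (time).} For $t\in(c,\kappa c]$ we have $\chi_{(0,t]}=\chi_{(0,c]}$ $\mi$-almost everywhere, since $\mi$ is supported in $(0,c]$. Also $t^{-aQ_*/q}\meg c^{-aQ_*/q}$, and $B(x,\kappa t^a)\subseteq B(x,\kappa^{1+a} c^a)$. So it is enough to control
\[
t^{-aQ_*/q}\norm{\chi_{(0,t]\times B(x,R t^a)}f}_{L^q(\mi\otimes\beta_L)}
\]
for $t\in(0,c]$, with $R\coloneqq\kappa^{1+a}$, uniformly in $x$.

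\emph{Step 2 (space).} Fix $t\in(0,c]$ and $x\in G$. Take a maximal $t^a$-separated family $(y_j)$ in $B(x,Rt^a)$, so that the balls $B(y_j,t^a)$ cover $B(x,Rt^a)$. The balls $B(y_j,t^a/2)$ are pairwise disjoint and contained in $B(x,(R+1)t^a)$. Left-invariance of $\beta_L$ then gives
\[
\#\{j\}\meg \frac{\beta_L(B(e,(R+1)t^a))}{\beta_L(B(e,t^a/2))}.
\]
This ratio is bounded uniformly for $t\in(0,c]$ by a constant $N$. For small radii it follows from $\beta_L(B(e,r))\asymp r^{Q_*}$ as $r\to0^+$; on the compact range of radii away from $0$ it follows from continuity and positivity of the volumes. (Equivalently, one may invoke Lemma~\ref{lem:50} with $\delta=t^a/2$, $R_0$ fixed.) Then
\[
\norm{\chi_{(0,t]\times B(x,Rt^a)}f}_{L^q}^q\meg\sum_j\norm{\chi_{(0,t]\times B(y_j,t^a)}f}_{L^q}^q\meg N\,t^{aQ_*}\norm{f}_{\Cc^q(\mi,a)}^q.
\]
Taking $q$-th roots and combining with Step 1 (for $t>c$, apply the bound at time $c$, and note that $c^{-aQ_*/q}$ is comparable to $t^{-aQ_*/q}$ up to $\kappa^{aQ_*/q}$) yields the claim with $C=N^{1/q}\kappa^{aQ_*/q}$.

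The only delicate step is the uniform volume-doubling bound for the ratio $\beta_L(B(e,(R+1)r))/\beta_L(B(e,r/2))$ with $r=t^a\meg c^a$. Since $G$ may have exponential growth, no global doubling is available. Boundedness of the radii is exactly what saves us, and it is here that restricting to $t\meg\kappa c$ (with $\mi$ of bounded support) is used. If $c^a$ lies beyond the range where the asymptotics $r^{Q_*}$ is effective, the compactness and continuity argument covers the intermediate radii.
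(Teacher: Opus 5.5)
Your proof is correct and is essentially the paper's argument. Since $\mi$ lives on $(0,c]$, you may replace $t$ by $\min(t,c)$ in the time cut-off, and then cover $B(x,\kappa t^a)$ by a number of balls of radius $\min(t,c)^a$ that is bounded uniformly in $x$ and in $t\in(0,\kappa c]$; the paper takes this bound from Lemma~\ref{lem:50}, while you derive it by a direct volume count, where on radii bounded away from $0$ you only need monotonicity of $r\mapsto\beta_L(B(e,r))$ and positivity of $\beta_L$ on open balls, not continuity.
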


\begin{proof}
	Observe that  Lemma~\ref{lem:50} shows that there is $N\in\N$ such that, for every $t\in (0,\kappa c]$ and for every $(\min(t,c)^a/2,2)$-lattice $(y_{t,j})_{j\in \N}$, there is a subset $J_t$ of $\N$ such that $B(e,\kappa t^a)\subseteq \bigcup_{j\in J_t} B(y_{t,j},\min(t,c)^a)$ and $\card(J_t)\meg N$. Then,
	\[
	\begin{split} 
		\norm*{t^{-aQ_*/q} \chi_{(0,t]\times B(x,\kappa t^a)} f }_{L^q(\mi\otimes \beta_L)}^{\min(1,q)}&\meg \norm*{\min(t,c)^{-aQ_*/q} \chi_{(0,\min(t,c)]\times B(x,\kappa t^a)} f }_{L^q(\mi\otimes \beta_L)}^{\min(1,q)}\\
			&\meg   \sum_{j\in J_t}\norm*{\min(t,c)^{-aQ_*/q} \chi_{(0,\min(t,c)]\times B(x y_{t,j}, \min(t,c)^a)} f }_{L^q(\mi\otimes \beta_L)}^{\min(1,q)}\\
			&\meg  N \norm{f}_{\Cc^q(\mi,a)}^{\min(1,q)}
	\end{split}
	\]
	for every $x\in G$ and for every $t\in (0,\kappa c]$. The assertion follows.
\end{proof}

\begin{lem}\label{lem:61}
	Take $q\in(0,\infty]$, $a>0$, and a positive measure $\mi$ on $(0,+\infty)$ with bounded support. Take $\delta>0$ and let $\mi_\delta$ be the push-forward of $\mi$ under the mapping $t\mapsto t/\delta$. Then, the mapping
	\[
	T_\delta\colon \Cc^q(\mi,a)\ni f \mapsto [(t,x)\mapsto f(\delta t,x) ]\in \Cc^q(\mi_\delta,a)
	\]
	is an isomorphism.
\end{lem}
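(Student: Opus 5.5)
The plan is to compute $\norm{T_\delta f}_{\Cc^q(\mi_\delta,a)}$ directly by changing variables in the time coordinate, and then compare the result with $\norm{f}_{\Cc^q(\mi,a)}$ by means of Lemma~\ref{lem:63}. Three preliminary observations:
\begin{itemize}
\item $T_\delta$ is clearly linear and bijective on measurable functions, with inverse $f\mapsto[(t,x)\mapsto f(t/\delta,x)]$.
\item $\mi$ is the push-forward of $\mi_\delta$ under $t\mapsto \delta t$, so $T_{1/\delta}$ maps $\Cc^q(\mi_\delta,a)$ into $\Cc^q(\mi,a)$ by the same argument with $\delta$ replaced by $1/\delta$.
\item By symmetry it therefore suffices to prove $\norm{T_\delta f}_{\Cc^q(\mi_\delta,a)}\meg C\norm{f}_{\Cc^q(\mi,a)}$ for every $\delta>0$, with $C$ depending on $\delta$. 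Since the spaces are quasi-normed (normed for $q\Meg1$), boundedness of $T_\delta$ and of its inverse gives the isomorphism.
\end{itemize}

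Set $c\coloneqq\max\supp\mi$, so that $\max\supp \mi_\delta=c/\delta$. For $(t,x)\in(0,c/\delta]\times G$, the definition of push-forward gives
\[
\norm*{t^{-aQ_*/q}\chi_{(0,t]\times B(x,t^a)}T_\delta f}_{L^q(\mi_\delta\otimes\beta_L)}
=\norm*{t^{-aQ_*/q}\chi_{(0,\delta t]\times B(x,t^a)} f}_{L^q(\mi\otimes\beta_L)}.
\]
The substitution is in $s\mapsto \delta s$ only; the spatial variable and $\beta_L$ are untouched. Write $t'\coloneqq\delta t\in(0,c]$. Then $t^a=\delta^{-a}t'^a$ and $t^{-aQ_*/q}=\delta^{aQ_*/q}t'^{-aQ_*/q}$, so the quantity above equals
\[
\delta^{aQ_*/q}\norm*{t'^{-aQ_*/q}\chi_{(0,t']\times B(x,\delta^{-a}t'^a)}f}_{L^q(\mi\otimes\beta_L)}.
\]
For $q=\infty$ the power of $\delta$ is simply $1$.

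To finish, I will bound the ball:
\begin{itemize}
\item If $\delta^{-a}\meg 1$, enlarge $B(x,\delta^{-a}t'^a)$ to $B(x,t'^a)$. The supremum over $(t',x)\in(0,c]\times G$ is then at most $\norm{f}_{\Cc^q(\mi,a)}$.
\item If $\delta^{-a}>1$, apply Lemma~\ref{lem:63} with $\kappa\coloneqq\delta^{-a}>1$. Since $t'\meg c\meg\kappa c$, that lemma bounds the supremum by $C\norm{f}_{\Cc^q(\mi,a)}$.
\end{itemize}
Taking the supremum over $(t,x)$ gives the claimed bound. This step is the only nontrivial one: the time rescaling distorts the radius of the spatial ball by a fixed factor. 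It is handled entirely by the covering argument already encoded in Lemma~\ref{lem:63}, which relies on the uniform finite overlap of lattices from Lemma~\ref{lem:50}.

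The measurability of $T_\delta f$ with respect to $\mi_\delta\otimes\beta$ is routine, since $(t,x)\mapsto(\delta t,x)$ pushes $\mi_\delta\otimes\beta$ forward to $\mi\otimes\beta$.
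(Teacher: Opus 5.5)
Your proposal is correct and follows the paper's proof essentially verbatim. The time substitution $s\mapsto\delta s$ produces the factor $\delta^{aQ_*/q}$ and the ball $B(x,\delta^{-a}t'^a)$. That ball is enlarged to $B(x,t'^a)$ when $\delta\geq 1$, and controlled by Lemma~\ref{lem:63} with $\kappa=\delta^{-a}$ when $\delta<1$. Continuity of the inverse follows by applying the same bound to $T_{1/\delta}$ on $\Cc^q(\mi_\delta,a)$.
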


\begin{proof}
	We may assume that $\beta=\beta_L$; set $c\coloneqq \sup \supp \mi$, so that $c/\delta=\sup \supp \mi_\delta$.
	It will suffice to prove continuity for $\delta\neq 1$. If $\delta>1$, the assertion is clear, since
	\[
	\begin{split}
		\norm{T_\delta f}_{\Cc^q(\mi_\delta,a)}&=\sup_{(t,x)\in  (0,c/\delta]\times G}\norm*{t^{-aQ_*/q} \chi_{(0,t]\times B(x,t^a)}  (s,y) f(\delta s,y) }_{L^q_{(s,y)}(\mi_\delta \otimes \beta )}\\
			&=\sup_{(t,x)\in  (0,c/\delta]\times G}\norm*{t^{-aQ_*/q} \chi_{(0,\delta t ]\times B(x,t^a)}   f }_{L^q(\mi \otimes \beta )}\\
			&=\delta^{aQ_*/q} \sup_{(t,x)\in  (0,c]\times G}\norm*{t^{-aQ_*/q} \chi_{ (0,t]\times B(x,t^a/\delta^a)}   f }_{L^q(\mi \otimes \beta )}\\
			&\meg  \delta^{aQ_*/q}\norm{f}_{\Cc^q(\mi,a)}
	\end{split}
	\]
	for every $(\mi \otimes \beta)$-measurable function $f$. Next, assume that $\delta\in (0,1)$.  
	Then, arguing as before and using Lemma~\ref{lem:63}, we see that there is a constant $C>0$ such that
	\[
	\begin{split}
		\norm{T_\delta f}_{\Cc^q(\mi_\delta,a)}& = \delta^{aQ_*/q}\sup_{(t,x)\in  (0,c]\times G}\norm*{t^{-aQ_*/q} \chi_{ (0,t]\times B(x,t^a/\delta^a)}   f }_{L^q(\mi \otimes \beta )}\\ 
		&\meg  C\norm{f}_{\Cc^q(\mi,a)},
	\end{split}
	\]
	for every $(\mi \otimes \beta)$-measurable function $f$, whence the conclusion.
\end{proof}

The following result shows the relation between $\Cc^q(\mi)$ and $\Cc^q(\mi,a)$. In particular, it allows to reduce to a specific $a$ in several proofs, if desired.

\begin{lem}\label{lem:61bis}
	Take $q\in(0,\infty]$, $a,a'>0$, and a positive measure $\mi$ on $(0,+\infty)$ with bounded support. Let $\mi^{(a')}$ be the push-forward of $\mi$ under the mapping $t\mapsto t^{a'}$. Then, the mapping
	\[
	T_\delta\colon \Cc^q(\mi,a)\ni f \mapsto [(t,x)\mapsto f(t^{1/a'},x) ]\in \Cc^q(\mi^{(a')},a/a')
	\]
	is an isometric isomorphism.
\end{lem}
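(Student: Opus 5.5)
The plan is a direct change of variables in the time parameter, checking that the defining suprema correspond exactly. Set $\phi(t)\coloneqq t^{a'}$, so that $\mi^{(a')}=\phi_*\mi$ and $T_\delta f=f\circ(\phi^{-1}\times \mathrm{id}_G)$. First, since $\phi$ is an increasing homeomorphism of $(0,+\infty)$, $\mi^{(a')}$ has bounded support and $\max\supp\mi^{(a')}=c^{a'}$, where $c\coloneqq\max\supp\mi$. Moreover, $T_\delta f$ is $(\mi^{(a')}\otimes\beta)$-measurable if and only if $f$ is $(\mi\otimes\beta)$-measurable. Finally, $T_\delta$ is clearly linear and bijective, with inverse $g\mapsto[(t,x)\mapsto g(t^{a'},x)]$. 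It therefore suffices to prove that $\norm{T_\delta f}_{\Cc^q(\mi^{(a')},a/a')}=\norm{f}_{\Cc^q(\mi,a)}$.

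Fix $(t,x)\in(0,c^{a'}]\times G$ and set $s\coloneqq t^{1/a'}\in(0,c]$. By the abstract change of variables formula for push-forward measures, applied to the positive function $|\chi_{(0,t]\times B(x,t^{a/a'})}T_\delta f|^q$ (or to its essential supremum when $q=\infty$), the $L^q(\mi^{(a')}\otimes\beta_L)$-norm of $\chi_{(0,t]\times B(x,t^{a/a'})}T_\delta f$ equals the $L^q(\mi\otimes\beta_L)$-norm of $(u,y)\mapsto \chi_{(0,t]}(u^{a'})\chi_{B(x,t^{a/a'})}(y)f(u,y)$. Since $\chi_{(0,t]}(u^{a'})=\chi_{(0,s]}(u)$ and $t^{a/a'}=s^a$, this is exactly the norm of $\chi_{(0,s]\times B(x,s^a)}f$. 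The weight also matches: $t^{-(a/a')Q_*/q}=s^{-aQ_*/q}$.

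Taking the supremum over $(t,x)\in(0,c^{a'}]\times G$ is the same as taking it over $(s,x)\in(0,c]\times G$, because $t\mapsto t^{1/a'}$ maps $(0,c^{a'}]$ bijectively onto $(0,c]$. This gives the required isometry. The only point needing care is the case $q=\infty$: there the essential supremum must be transported under the push-forward, which holds because $\phi\times\mathrm{id}$ is a Borel isomorphism carrying null sets to null sets. Beyond this there is no real obstacle; the argument is bookkeeping.
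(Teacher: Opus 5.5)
Your proof is correct and matches the paper's intent. The paper states this lemma without proof and treats it as an immediate change of variables $t\mapsto t^{a'}$. Your argument carries out exactly that bookkeeping: you match $\max\supp\mu^{(a')}=c^{a'}$, identify the window $(0,t]\times B(x,t^{a/a'})$ with $(0,s]\times B(x,s^{a})$ for $s=t^{1/a'}$, equate the weights $t^{-(a/a')Q_*/q}=s^{-aQ_*/q}$, and use that $t\mapsto t^{1/a'}$ maps $(0,c^{a'}]$ bijectively onto $(0,c]$.
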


\begin{lem}\label{lem:2bis}
	There is $\omega\in \R$ such that the following hold.
	Take $ q\in [1,\infty]$, $a,b>0$, $c,d>1$,   a positive Radon measure $\mi$ on $(0,+\infty)$ with bounded support,  and a  function $\kappa\colon (0,+\infty)\to (0,+\infty)$ such that $\kappa(t)\in (0,c t^{a d}]\cup [1/c,+\infty)$ for every $t>0$. 
	Then, there is   $C>0$ such that, for every $(\mi \otimes \beta)$-measurable function $f\colon (0,+\infty) \times G\to [0,+\infty]$,
	\[
	\begin{split}
		\norm{\ee^{-\omega\kappa(t)} [T_{b,\kappa(t),d} f(\,\cdot\,,t)](x) }_{\Cc^q_{(t,x)}(\mi,a)} \meg C  \norm{  f }_{\Cc^q(\mi,a)} .
	\end{split}
	\]
\end{lem}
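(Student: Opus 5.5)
We first reduce to the case $\beta=\beta_L$. Replacing $f$ by $\Delta_L f$ changes $T_{b,\kappa(t),d}$ only by a factor controlled by $\ee^{C\abs{\,\cdot\,}_*}$ inside the convolution. This factor is absorbed by slightly decreasing $b$ and enlarging $\omega$, using the third item of Lemma~\ref{lem:18}. By Lemma~\ref{lem:61bis} we may also take $a$ fixed if convenient, although the argument below works for general $a$.

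Fix $(t_0,x_0)\in(0,c_\mi]\times G$, where $c_\mi=\max\supp\mi$, and set $B_0\coloneqq B(x_0,t_0^a)$. We must bound
\[
t_0^{-aQ_*/q}\norm{\chi_{(0,t_0]\times B_0}(t,x)\,\ee^{-\omega\kappa(t)}T_{b,\kappa(t),d}f(\,\cdot\,,t)(x)}_{L^q(\mi\otimes\beta)} .
\]
Write $T_{b,\kappa(t),d}f(\,\cdot\,,t)(x)=\int f(y,t)\,p_{b,\kappa(t),d}(y^{-1}x)\,\dd\beta(y)$. Cover $G$ by a $(t_0^a,2)$-lattice $(y_j)$, ordered by the distance from $x_0$, and decompose $f(\,\cdot\,,t)=\sum_j f_j(\,\cdot\,,t)$ with $f_j$ supported in $B(y_j,2t_0^a)$ and bounded overlap (Lemma~\ref{lem:50}). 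By Young's inequality in $x$ (case $p_1=q$, $p_2=1$, unimodular weights because $\beta=\beta_L$), together with the pointwise decay of $p_{b,\kappa(t),d}$ between $B_0$ and $B(y_j,2t_0^a)$, we get for each $t$
\[
\norm{\chi_{B_0}T_{b,\kappa(t),d}f_j(\,\cdot\,,t)}_{L^q}\meg \sup_{z\in B(y_j,2t_0^a)^{-1}B_0}p_{b/2,\kappa(t),d}(z)^{1/2}\,\norm{p_{b/2,\kappa(t),d}}_{L^1}^{}\,\norm{f_j(\,\cdot\,,t)}_{L^q}.
\]
Here the first factor is obtained by splitting $p_b\meg p_{b/2}^{1/2}\cdot(\text{decay})$, in the spirit of the second item of Lemma~\ref{lem:18}.

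We then treat the two regimes allowed for $\kappa(t)$. If $\kappa(t)\meg c\,t^{ad}\meg c\,t_0^{ad}$, then $(\abs{z}_*^d/\kappa(t))^{1/(d-1)}\gtrsim (\abs{z}_*/t_0^a)^{d/(d-1)}$. Hence the $j$-th term gains a factor $\ee^{-b'(d(x_0,y_j)/t_0^a)^{d/(d-1)}}$ once $d(x_0,y_j)\Meg 4t_0^a$, while $\norm{p_{b/2,\kappa(t),d}}_{L^1}\meg C\ee^{C\kappa(t)}$ by Lemma~\ref{lem:18}. If instead $\kappa(t)\Meg 1/c$, the kernel is spread at scale $\gtrsim1$, so $\kappa(t)^{-Q_*/d}\meg C$ and the decay is only $\ee^{-b(\abs{z}_*^d/\kappa(t))^{1/(d-1)}}$. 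We compensate the possible exponential growth of $\kappa(t)$ (and of the volume of balls) by choosing $\omega$ large, using $\ee^{-\omega\kappa}\ee^{C\kappa}$ together with $\sup_\kappa \ee^{-\omega\kappa}\ee^{-b(r^d/\kappa)^{1/(d-1)}}\meg \ee^{-b'r}$ for suitable $\omega$, so that we still get exponential decay $\ee^{-b' d(x_0,y_j)}$. Summing over $j$, Lemma~\ref{lem:63} gives $\norm{\chi_{(0,t_0]\times B(y_j,2t_0^a)}f}_{L^q}\meg C t_0^{aQ_*/q}\norm f_{\Cc^q(\mi,a)}$. Since the number of lattice points with $d(x_0,y_j)\sim 2^k t_0^a$ is at most $C(2^kt_0^a)^{Q_*}t_0^{-aQ_*}\ee^{C2^kt_0^a}$ (from the volume estimates for balls), the decaying factors make the series converge uniformly in $t_0\meg c_\mi$, which gives the claim. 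The case $q=\infty$ is the same with sup norms.

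The main obstacle is the regime $\kappa(t)\Meg 1/c$ with $t_0$ small. There the kernel is much wider than $t_0^a$, so counting lattice points at scale $t_0^a$ produces a factor $t_0^{-aQ_*}$ that is not beaten by Gaussian decay. For that regime I would instead use a lattice at scale $1$: apply Lemma~\ref{lem:63} with radius $1$, valid since $t_0\meg c_\mi$ and the $\Cc^q$ norm with larger balls at a fixed top time is controlled, giving a bound $t_0^{-aQ_*/q}\cdot 1^{aQ_*/q}$. This is not yet good enough, so I would exploit that $p_{b,\kappa,d}$ is essentially constant on $t_0^a$-balls: the $L^q(B_0)$ norm of the convolution is at most $\beta(B_0)^{1/q}\sup_{B_0}$, and $\sup_{B_0}$ is bounded by $\kappa(t)^{-Q_*/(dq)}$ times the local $L^q$ norms of $f$ on unit balls, weighted by decay. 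The factor $\beta(B_0)^{1/q}\sim t_0^{aQ_*/q}$ then cancels the normalization. This Hölder/sup argument replaces Young in that regime.
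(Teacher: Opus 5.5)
Your argument is correct once the Hölder/sup estimate of your last paragraph replaces the scale-$t_0^a$ lattice count whenever $\kappa(t)\Meg 1/c$. It is, however, organized differently from the paper's.

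The paper splits in space rather than in $\kappa$. The part of $f$ living in $B(x,4t)$ is handled for all values of $\kappa$ at once, by the uniform $L^q$-boundedness of $\ee^{-\omega\kappa}T_{b,\kappa,d}$ (through the maximal operator $T^\omega_{b,*,d}$ if $q>1$, by duality if $q=1$). For the rest, the paper proceeds as follows:
\begin{enumerate}
\item it applies Jensen, $(Tg)^q\meg C_1^{q/q'}T(g^q)$;
\item it bounds the kernel pointwise on each far ball $B(xy_{t,j},t)$ by $\sup_{t'\in(0,ct^d]\cup[1/c,+\infty)}t'^{-Q_*/d}\ee^{-\omega t'-b'(\abs{y_{t,j}}_*^d/t')^{1/(d-1)}}$;
\item it integrates over the target ball $B(x,t)$.
\end{enumerate}
The resulting factor $\beta(B(e,t))$ turns the sum over a single $t$-scale lattice into a Riemann sum of an integrable function, namely $t^{-Q_*}\ee^{-b''(\abs{y}_*/t)^{d/(d-1)}}$ or $\ee^{-b'''\omega^{1/d}\abs{y}_*}$.

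Your Hölder/sup step for large $\kappa$ is exactly this mechanism, run on a unit-scale lattice; your $\beta(B_0)^{1/q}$ plays the r\^ole of $\beta(B(e,t))$. Your Young-plus-Gaussian-decay step for $\kappa(t)\meg ct^{ad}$ replaces the paper's combination of local $L^q$-boundedness and a pointwise far-field bound. Your route is more elementary (no maximal function, no Jensen) and makes transparent why both ranges of $\kappa$ are needed. The paper's route treats the near terms uniformly in $\kappa$ and needs only one lattice and one kernel computation.

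When you write it out, fix the following points.
\begin{enumerate}
\item The factor in your display must be $\sup_{z}\ee^{-(b/2)(\abs{z}_*^d/\kappa(t))^{1/(d-1)}}$, not $\sup_z p_{b/2,\kappa(t),d}(z)^{1/2}$. The latter contains $\kappa(t)^{-Q_*/(2d)}$, which is unbounded on the pieces with $d(x_0,y_j)<4t_0^a$ as $\kappa(t)\to 0$.
\item Young's inequality with $\beta=\beta_L$ still puts $\Delta_R^{-1/q'}$ on the kernel; the weights are not trivial unless $G$ is unimodular. This factor is absorbed by the third item of Lemma~\ref{lem:18} with $\rho>0$, at the cost of $\ee^{C\kappa(t)}$, which is bounded in this regime.
\item The claim in your third paragraph, that the series converges uniformly in $t_0$ also when $\kappa(t)\Meg 1/c$, is false, as you yourself observe. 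The Young argument must therefore be restricted to $\kappa(t)\meg ct^{ad}$.
\item The reduction to $\beta=\beta_L$ is trivial, but not for the reason you give. By~\eqref{eq:1} the convolution equals $\int_G f(y)g(y^{-1}x)\,\dd\beta_L(y)$, and $\Cc^q(\mi,a)$ is defined through $\beta_L$, so nothing depends on $\beta$. Replacing $f$ by $\Delta_L f$, as you propose, would not preserve $\Cc^q$-norms.
\end{enumerate}
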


This result replaces~\cite[Lemma 5.11]{BCP} in most occurrences; the proof is inspired by that of~\cite[Lemma 5]{Rychkov}. It would  be desirable to have an analogue of the above result (possibly for $q>1$) with $\ee^{-\omega \kappa(s)} T_{b,\kappa(s),d}$ replaced by $T^{\omega}_{b,*,d}$. This would allow to provide a cleaner description of the space $\mathring F^{\infty,q}_\alpha(\beta)$ (defined below), but we have not been able to find a proof.

\begin{proof} 
	We may assume that $\beta=\beta_L$, and also that $a=1$, thanks to Lemma~\ref{lem:61bis}.		
	We define $(T f)(t,x)\coloneqq \ee^{-\omega \kappa(t)}T_{b,\kappa(t),d}(f(t,\,\cdot\,))(x)$.
	The measurability of $Tf$ may be established as in the proof of~\cite[Lemma 5.11]{BCP}. 
	Take $\omega>0$ so that $T^{\omega}_{b,*,d}$ is of strong type $(r,r)$ for every $r\in (1,\infty]$  (cf.~\cite[Lemma 5.10]{BCP}) and such that 
	\[
	C_1\coloneqq \sup_{t>0} \sup_{r\in [1,\infty]}\ee^{-\omega t}\norm{ \Delta_R^{-1/r} p_{b,t,d}}_{L^1(\beta)} 
	\]
	is finite  (cf.~Lemma~\ref{lem:3}).
	Observe that the case $q=\infty$ is trivial, since $\Cc^\infty(\mi,1)=L^\infty(\mi\otimes \beta)$.
	We may therefore assume that $q\in [1,\infty)$.
	
	Set $c'\coloneqq \max \supp \mi$ and take, for every $t\in (0,c']$, a $(t/2,2)$-lattice $(y_{t,j})_{j\in \N}$ on $G$.
	Then, by Lemma~\ref{lem:50}, there is a constant $N\in\N$ such that   $\sum_j \chi_{B(y_{t,j},5t)}\meg N$ for every $t\in (0,c']$. 
	If $q>1$, then by the previous remarks and Lemma~\ref{lem:63} there is a constant $C_2>1$ such that
	\[
	\begin{split}
		&\norm*{t^{-Q_*/q}\chi_{(0,t]\times B(x,t)}  T(\chi_{(0,+\infty)\times B(x,4 t)} f ) }_{L^q(\mi \otimes \beta)}\\
		&\qquad\meg\norm*{t^{-Q_*/q}\chi_{ (0,t]}( s)  T^{\omega}_{b,*,d}(\chi_{  B(x,4 t)} f(s,\,\cdot\,) )(y)}_{L^q_{(y,s)}(\mi \otimes \beta)}\\
		&\qquad\meg C_2 \norm*{t^{-Q_*/q}\chi_{(0,t]\times B(x,4 t)}  f }_{L^q (\mi \otimes \beta)}\\ 
		&\qquad\meg C_2^2  \norm{f}_{\Cc^q(\mi)}
	\end{split}
	\]
	for every $x\in G$ and for every $t\in (0,c']$.
	If, otherwise, $q=1$, then by the previous remarks and Lemma~\ref{lem:63} we see that there is a constant $C_3>0$ such that, for every $x\in G$ and for every $t\in (0,c']$,
	\[
	\begin{split}
		&\norm*{t^{-Q_*}\chi_{(0,t]\times B(x,t)}   T(\chi_{ (0,+\infty)\times B(x,4 t)} f ) }_{L^1 (\mi \otimes \beta)}\\
		&\qquad= t^{-Q_*}\sup_{g\in \Gc_t}  \int_{ (0,t]} \ee^{-\omega \kappa(s)} \langle (\chi_{B(x,4 t)} {f(s,\,\cdot\,)} )*p_{b,\kappa(s),d}\vert g(s,\,\cdot\,) \rangle \,\dd \mi(s) \\
		&\qquad\meg t^{-Q_*}\sup_{g\in \Gc_t} \int_{ (0,t]} \ee^{-\omega \kappa(s)}  \langle \chi_{B(x,4 t)} {f(s,\,\cdot\,)} \vert g(s,\,\cdot\,)* p_{b,\kappa(s),d} \rangle  \,\dd \mi(s)\\
		&\qquad \meg C_1 t^{-Q_*} \int_{ (0,t]}   \int_{B(x,4 t)} {f(s,y)}\,\dd \beta(y) \,\dd \mi(s)\\ 
		&\qquad \meg C_3\norm{f}_{\Cc^q(\mi)}
	\end{split}
	\]
	where $\Gc_t$ is the set of \emph{positive} $g\in L^\infty(\mi \otimes \beta)$ concentrated on $(0,t]\times B(x,t) $ and such that $\norm{g}_{L^\infty(\mi \otimes \beta)}\meg 1$.
	
	It then remains to estimate $\norm*{t^{-Q_*/q}\chi_{(0,t]\times B(x,t)}  T(\chi_{(0,+\infty)\times [G\setminus B(x,4 t)]} f ) }_{L^q(\mi \otimes \beta)}$ for $q\in [1,\infty)$.
	Observe first that Jensen's inequality shows that
	\[
	\begin{split}
		(T g)(s,y)^q&=  \left( \ee^{-\omega \kappa(s)} \int_G {g(s,yz^{-1})} p_{b,\kappa(s),d}(z)\,\dd\beta_R(z)\right) ^q\\
		&\meg C_1^{q/q'}\ee^{-\omega \kappa(s)} \int_G {g(s,yz^{-1})}^q p_{b,\kappa(s),d}(z)\,\dd\beta_R(z)\\
		&=C_1^{q/q'} T (g^q)(s,y)
	\end{split}
	\]
	for every $(\mi\otimes \beta)$-measurable function $g\colon (0,+\infty)\times G\to [0,+\infty]$ and for every $(s,y)\in (0,+\infty) \times G$. Applying the previous inequality to $g=\chi_{(0,t]\times [G\setminus B(x,4t)]}f $ and setting $J_t\coloneqq \Set{j\in \N\colon \abs{y_{t,j}}_*\Meg  3t}$, we then see that
	\[
	\begin{split}
		T(\chi_{ (0,t]\times [G\setminus B(x,4t)]} f )(s,y)^q&\meg C_1^{q/q'} T(\chi_{ (0,t]\times [G\setminus B(x,4t)]} {f }^q)(s,y)\\
		&\meg C_1^{q/q'}\sum_{j\in J_t} T (\chi_{ (0,t]\times B(x y_{t,j},t)} {f }^q)(s,y)
	\end{split}
	\]
	for every $(s,y)\in (0,t] \times G$, for every $x\in G$, and for every $t\in (0,c']$.
	Now, observe that, if   $y\in B(x,t)$, then  $\abs{z^{-1}y}_*\Meg \abs{y_{t,j}^{-1} x^{-1} y}_*-t\Meg \abs{y_{t,j}^{-1}}_*-2 t\Meg \abs{y_{t,j}}_*/3$ for every $z\in B(x y_{t,j},t)$ and for every $j\in J_t$, so that, setting $b'\coloneqq 3^{-d/(d-1)} b $,
	\[
	\begin{split}
		&T (\chi_{ (0,t]\times B(x y_{t,j},t)} {f }^q)(s,y) =  \ee^{-\omega \kappa(s)} \kappa(s)^{-Q_*/d} \int_{B(x y_{t,j},t)}  {f(s,z)}^q \ee^{-b(\abs{z^{-1}y}_*^{d}/\kappa(s))^{1/(d-1)}}\,\dd \beta(z)\\ 
		&\qquad\meg   \sup_{t'\in (0, c t^d]\cup [1/c,+\infty)} t'^{-Q_*/d}\ee^{-\omega t'-b'(\abs{y_{t,j}}_*^d/t')^{1/(d-1)}}  \int_{B(x y_{t,j},t)}  {f(s,z)}^q  \,\dd \beta(z).
	\end{split}
	\]
	Consequently, 
	\[
	\begin{split}
		&\norm*{\chi_{(0,t]\times B(x,t)} T (\chi_{(0,t]\times [G\setminus B(x,4t)]} {f }^q)}^q_{L^q(\mi \otimes \beta)}\\
		&\qquad\meg C_1^{1/q'} \beta(B(e,t))\sum_{j\in J_t }\sup_{t'\in (0, c t^d]\cup [1/c,+\infty)} t'^{-Q_*/d}\ee^{-\omega t'-b'(\abs{y_{t,j}}_*^{d}/t')^{1/(d-1)}} \sup_{y\in G}\norm*{\chi_{(0,t]\times B(y,t)}  f}^q_{L^q(\mi \otimes \beta)} .
	\end{split}
	\]
	It will therefore suffice to show that both
	\[
	\sup_{t\in (0,c']}\beta(B(e,t))\sum_{j\in J' }\sup_{t'\in (0, c t^d] } t'^{-Q_*/d}\ee^{-\omega t'-b'(\abs{y_{t,j}}_*^d/t')^{1/(d-1)}} 
	\]
	and
	\[
	\sup_{t\in (0,c']}\beta(B(e,t))\sum_{j\in J' } \sup_{t'\in  [1/c,+\infty)} t'^{-Q_*/d}\ee^{-\omega t'-b'(\abs{y_{t,j}}_*^{d}/t')^{1/(d-1)}} 
	\]
	are finite. 
	Observe first that  the function $t'\mapsto t'^{-Q_*/d}\ee^{ -b'(\abs{y_{t,j}}_*^{d}/t')^{1/(d-1)}}$ attains its  maximum at the point $\alpha \abs{y_{t,j}}_*^d$, where $\alpha= \big(\frac{b' d}{Q_*(d-1)}\big)^{d-1}$. Consequently, 
	\[
	\sup_{t'\in (0, ct^d] } t'^{-Q_*/d}\ee^{-\omega t'-b'(\abs{y_{t,j}}_*^d/t')^{1/(d-1)}} \meg\begin{cases} \alpha^{-Q_*/d} \ee^{- b' \alpha^{-1/(d-1)}}\abs{y_{t,j}}_*^{-Q_*}  
		 & \text{if   $\alpha\abs{y_{t,j}}_*^d\meg c t^d$}\\
		 c^{-Q_*/d} t^{-Q_*}\ee^{ -b'c^{-1/(d-1)}(\abs{y_{t,j}}_*/t)^{d/(d-1)}}& \text{if   $\alpha \abs{y_{t,j}}_*^d> c t^d$}.
	\end{cases}
	\]
	Taking into account the fact that $\abs{y_{t,j}}_*\Meg 3 t$, we then see that there is a constant $C_4>0$ such that 
	\[
	\begin{split}
		\sup_{t'\in (0, ct^{d}] } t'^{-Q_*/d}\ee^{-\omega t'-b'(\abs{y_{t,j}}_*^{d}/t')^{1/(d-1)}}& \meg C_4  t^{-Q_*}\ee^{-b'' (\abs{y_{t,j}}_*/t)^{d/(d-1)}},
	\end{split}
	\]
	where $b''\coloneqq b'c^{-1/(d-1)}$.
	In a similar way, since the function $t'\mapsto -\omega t'-  b'(\abs{y_{t,j}}_*^{d}/t')^{1/(d-1)}$ attains its maximum at $ \alpha'\abs{y_{t,j}}_*$, where $\alpha'=\big( \frac{b'}{\omega(d-1)} \big)^{1-1/d}$, we see that
	\[
	\sup_{t'\in  [1/c,+\infty)} t'^{-Q_*/d}\ee^{-\omega t'-b'(\abs{y_{t,j}}_*^{d}/t')^{1/(d-1)}} \meg \begin{cases}
		 c^{Q_*/d}  \ee^{ -\omega^{1/d} d [b'/(d-1)]^{1-1/d}  \abs{y_{t,j}}_*  }& \text{if $\alpha'\abs{y_{t,j}}_* \Meg 1/c$}\\
		 c^{-Q_*/d}\ee^{-\omega /c-b'c^{1/(d-1)} \abs{y_{t,j}}_*^{d/(d-1)}}  & \text{if $\alpha'\abs{y_{t,j}}_* < 1/c$},
	\end{cases}
	\]
	so that there are $b''',C_5>0$ such that  
	\[
	\sup_{t'\in  [1/c,+\infty)} t'^{-Q_*/d}\ee^{-\omega t'-b'(\abs{y_{t,j}}_*^{d}/t')^{1/(d-1)}} \meg C_5\ee^{-b'''\omega^{1/d}\abs{y_{t,j}}_* } .
	\]
	In addition, there is a constant $C_6>0$ such that, for every $t\in (0,c']$,
	\[
	\begin{split}
	\beta(B(e,t))\sum_{j\in J_t} t^{-Q_*}  \ee^{-b'' (\abs{y_{t,j}}_*/t)^{d/(d-1)}}&\meg C_6 \sum_{j\in J_t}\int_{B(y_{t,j},t/2) } t^{-Q_*} \ee^{-2^{d/(d-1)}b''(\abs{y }_*/t)^{d/(d-1)}}\,\dd \beta(y)\\
		&\meg C_6\int_G t^{-Q_*} \ee^{-2^{-d/(d-1)}b''(\abs{y }_*/t)^{d/(d-1)}}\,\dd \beta(y),
	\end{split}
	\] 
	which is uniformly bounded for $t\in (0,c']$ by  Lemma~\ref{lem:3}.	Analogously, for every $t\in (0,c']$,
	\[
	\begin{split}
		\beta(B(e,t))\sum_{j\in J_t} \ee^{- b'''\omega^{1/d}  \abs{y_{t,j}}_* } &\meg C_6 \ee^{\omega^{1/d}b'''t/2} \sum_{j\in J_t}\int_{B(y_{t,j},t/2) } \ee^{-  b'''\omega^{1/d} \abs{y}_* } \,\dd \beta(y)\\
			&\meg C_6 \ee^{\omega^{1/d}b'''c'/2} \int_{G} \ee^{- b'''\omega^{1/d}  \abs{y}_* } \,\dd \beta(y),
	\end{split}
	\]
	which is finite if $\omega$ is sufficiently large. The assertion follows. 
\end{proof}
 
We now prove a series of technical lemmas which allow, under some assumptions, to replace the $\Cc^q(\mi)$ norm with an $L^{q,\infty}(\mi,\beta)$ norm (on a smaller set). Even though, in this context, these results will not have the far reaching consequences obtained in~\cite{FrazierJawerth} (as usual, because we lack a proper `discretization'), they will still be of some use.

\begin{deff}
	Take $q\in (0,\infty]$, $a>0$, $\eta\in (0,1)$,  and a positive measure $\mi$ on $(0,+\infty)$. Define
	\[
	(\Gc^{q}_{t} f)(x) \coloneqq \norm{  \chi_{(0,t]} f(\,\cdot\,,x) }_{L^q(  \mi)}
	\]
	and
	\[
	(\Gc^q f)(x)\coloneqq \sup_{t>0}(\Gc^{q}_{t} f)(x)=\norm{ f(\,\cdot\,,x) }_{L^q(  \mi)}
	\]
	for every $(\mi \otimes \beta)$-measurable function $f\colon (0,+\infty) \times G\to [0,+\infty]$ and for every $x\in G$.
	In addition, set
	\[
	(m^q_{\eta,a,t} f)(x)\coloneqq \sup\Set{\eps>0\colon  \beta_L(\Set{y\in B(x,t^a) \colon (\Gc^q_{t} f)(y)>\eps  })> \eta\beta(B(e,t^a))}
	\]
	and
	\[
	(m^q_{\eta,a} f)(x)\coloneqq \sup_{t\in (0,c]} (m^q_{\eta,a,t} f )(x)
	\]
	for every $f$ and $x$ as above, where $c\coloneqq \sup \supp \mi$.
\end{deff}
 
\begin{oss}\label{oss:5}
	Take $q\in (0,\infty]$, $a>0$, $\eta\in (0,1)$,  a positive measure $\mi$ on $(0,+\infty)$, and a $(\mi \otimes \beta)$-measurable function $f\colon (0,+\infty) \times G\to [0,+\infty]$. Then, $\Gc^q_t f, \Gc^q f, m^q_{\eta,a,t} f$, and $m^q_{\eta,a} f$ are $\beta$-measurable functions. 
\end{oss}

\begin{proof} 
	The measurability of the first two functions follows from Fubini's theorem. Then, observe that 
	\[
	\Set{x\in G\colon (m^q_{\eta,a,t} f)(x)> \alpha}= \bigcup_{\eps\in (\alpha,+\infty)} \Set{ x\in G\colon \beta_L(\Set{y\in B(x,t^a) \colon (\Gc^q_{t} f)(y)>\eps  })>  \eta\beta(B(e,t^a))  }
	\]
	for every $\alpha>0$.
	In addition, the function 
	\[
	G\ni x \mapsto \beta_L(\Set{y\in B(x,t^a) \colon (\Gc^q_{t} f)(y)>\eps  })=\int_{G} \chi_{(\Gc^q_{t} f)^{-1}((\eps,+\infty))}\chi_{B(x,t^a)}\,\dd \beta_L\in [0,+\infty)
	\]
	is continuous, since the function $G\ni x \mapsto \chi_{B(x,t^a)}=\chi_{B(e,t^a)}(x^{-1}\,\cdot\,)\in L^1(G)$ is continuous. Consequently,   $m^q_{\eta,a,t} f$ is lower semi-continuous, so that also $m^q_{\eta,a} f$ is lower semi-continuous.
\end{proof}

\begin{lem}\label{lem:59}
	Take $p\in (0,\infty)$, $q\in (0,\infty]$, $a>0$, $\eta\in (0,1)$,    and a positive measure $\mi$  on $(0,+\infty)$ with bounded support. Then there is a constant $C>0$ such that for every $(  \mi \otimes \beta)$-measurable function $f\colon (0,+\infty) \times G\to [0,+\infty]$ there is a $(\mi\otimes \beta)$-measurable subset $E$ of $(0,+\infty)\times G$ such that 
	\[
	\beta(\Set{y\in B(x,t^a)\colon (t,y)\in E})\Meg (1-\eta) \beta(B(e,t^a))
	\]
	for every $x\in G$ and for  every $t\in (0,\max \supp \mi]$, and such that
	\[
	 \norm{\chi_E f}_{L^{q,p}(\mi,\beta)}\meg \norm{m^q_{\eta,a} f}_{L^p(\beta)}\meg C\norm{f}_{L^{q,p}(\mi,\beta)}
	\]
	and
	\[
	\norm{\chi_E f}_{L^{q,\infty}(\mi,\beta)}\meg \norm{m^q_{\eta,a} f}_{L^\infty(\beta)}\meg C\norm{f}_{\Cc^{q}(\mi,a)}.
	\]
\end{lem}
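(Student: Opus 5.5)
We construct $E$ directly from the maximal-type function $m^q_{\eta,a}f$ and then prove the two upper bounds separately, the $L^p$ one by a weak-type/maximal-function argument and the $L^\infty$ one by Chebyshev on balls. As in Lemma~\ref{lem:2bis}, we may assume $\beta=\beta_L$ (all balls are measured with $\beta_L$ in the definitions) and, by Lemma~\ref{lem:61bis}, that $a=1$. Set $c\coloneqq\max\supp\mi$.

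\textbf{Construction of $E$.} Define
\[
E\coloneqq\Set{(t,y)\in(0,c]\times G\colon (\Gc^q_t f)(y)\meg (m^q_{\eta,1}f)(y)}.
\]
This set is measurable by Remark~\ref{oss:5}, together with the joint measurability of $(t,y)\mapsto(\Gc^q_tf)(y)$, which follows from Fubini's theorem and monotonicity in $t$. For the density condition, fix $x$ and $t$ and write $\eps\coloneqq(m^q_{\eta,1,t}f)(x)$. By definition of the supremum,
\[
\beta(\Set{y\in B(x,t)\colon(\Gc^q_tf)(y)>\eps})\meg\eta\beta(B(e,t)).
\]
If $y\in B(x,t)$, then $x\in B(y,t)$. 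A short argument is needed here: for $y\in B(x,t)$ one can compare $m^q_{\eta,1,t}f(y)$ with the quantity at $x$, but the balls differ. It is safer to first show the inequality with $m^q_{\eta,1,t}f(x)$ and then replace it by a maximal quantity. I would therefore redefine
\[
E\coloneqq\Set{(t,y)\colon (\Gc^q_tf)(y)\meg \inf_{x\in B(y,t)}(m^q_{\eta,1,t}f)(x)}
\]
if needed. The key point is that on $E$ we have $\chi_E(t,y)(\Gc^q_t f)(y)\meg (m^q_{\eta,1}f)(y)$; this is the one place in the argument that still needs to be nailed down.

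\textbf{Lower bounds.} Since $\Gc^q$ is the monotone limit of $\Gc^q_t$ and $\chi_E f$ is supported where $\Gc^q_t f\meg m^q_{\eta,1}f$, we get pointwise
\[
\norm{\chi_E(\cdot,y)f(\cdot,y)}_{L^q(\mi)}\meg (m^q_{\eta,1}f)(y).
\]
To see this, let $t_0\coloneqq\sup\Set{t\colon(t,y)\in E}$; then $\norm{\chi_Ef(\cdot,y)}_{L^q(\mi)}\meg\Gc^q_{t_0}f(y)$, which is approximated from below by points of $E$. Taking $L^p$ and $L^\infty$ norms gives the first inequality in each chain.

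\textbf{Upper bounds.} For the $L^\infty$ bound, if $(m^q_{\eta,1,t}f)(x)>\eps$, then by Chebyshev
\[
\eta\beta(B(e,t))\,\eps^q<\int_{B(x,t)}(\Gc^q_tf)^q\,\dd\beta=\norm{\chi_{(0,t]\times B(x,t)}f}^q_{L^q(\mi\otimes\beta)}.
\]
Since $\beta(B(e,t))\asymp t^{Q_*}$ for $t\meg c$, this yields $\eps\meg C\norm{f}_{\Cc^q(\mi)}$. For the $L^p$ bound, $m^q_{\eta,1,t}f(x)>\eps$ forces $x$ to lie in the set where the local Hardy--Littlewood maximal function (radii $\meg c$) of $\chi_{\Set{\Gc^qf>\eps}}$ exceeds a constant multiple of $\eta$, using $\Gc^q_t\meg\Gc^q$. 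The local maximal operator is of weak type $(1,1)$ with respect to $\beta_L$ on balls of radius $\meg c$, by local doubling. Hence
\[
\beta(\Set{m^q_{\eta,1}f>\eps})\meg C\eta^{-1}\beta(\Set{\Gc^qf>\eps}),
\]
and integrating $p\eps^{p-1}\,\dd\eps$ gives $\norm{m^q_{\eta,1}f}_{L^p}\meg C\norm{\Gc^qf}_{L^p}=C\norm{f}_{L^{q,p}(\mi,\beta)}$.

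The main obstacle is the construction of $E$ so that both the density condition and the pointwise domination by $m^q_{\eta,1}f$ hold. The Frazier--Jawerth trick uses the lower semicontinuity and the defining property at the centre $x$, and one must handle that $E$'s slices are indexed by $y$ rather than $x$. A cleaner choice that should satisfy both properties at once is
\[
E=\Set{(t,y)\colon(\Gc^q_tf)(y)\meg(m^q_{\eta,1}f)(z)\ \forall z\in B(y,t)}.
\]
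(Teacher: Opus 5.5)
Your two upper bounds are the paper's: the inclusion of $\Set{m^q_{\eta,a}f>\eps}$ in a superlevel set of the truncated maximal function of $\chi_{\Set{\Gc^q f>\eps}}$ plus the layer-cake formula, and Chebyshev on $B(x,t^a)$. Your lower-bound argument is also fine for any $E$ whose $t$-slices are initial segments. The gap is the one you flagged, and it is genuine.

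Your first set $\Set{(t,y)\colon (\Gc^q_tf)(y)\meg (m^q_{\eta,1}f)(y)}$ is, up to the regularisation through $\theta_j$ and $2^{-j}$, the set the paper builds. The paper's density step ends with $\Set{y\in B(x,t^a)\colon (\Gc^q_tf)(y)\Meg (m^q_{\eta,a,t}f)(y)+2^{-j}}$, i.e.\ with the threshold at the running point $y$. The definition, however, only gives $\beta(\Set{y\in B(x,t^a)\colon (\Gc^q_tf)(y)>(m^q_{\eta,a,t}f)(x)})\meg\eta\beta(B(e,t^a))$, with the threshold at the centre.

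This choice really fails. Take $G=\R$, $a=1$, $\eta=2/5$, $\mi=\delta_1+\delta_2$, and $f(s,y)=\chi_{\Set{1}}(s)\chi_A(y)$ with $A=\Set{y\in\R\colon 9/20\meg\abs{y}\meg 1}$. Then $\Gc^q_tf=\chi_{[1,\infty)}(t)\chi_A$, and one checks $\abs{A\cap(y-t,y+t)}\meg 4t/5=\eta\beta(B(0,t))$ for $y\in A$ and $t\in[1,2]$. So $m^q_{\eta,1}f$ vanishes on $A$. Hence the $t=1$ slice of your set (and of the paper's $\bigcap_j E_j$) meets $B(0,1)$ only in $(-9/20,9/20)$, of measure $9/10<6/5=(1-\eta)\beta(B(0,1))$.

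Your proposed repairs go in the wrong direction. Taking an infimum over $B(y,t)$, of $m^q_{\eta,1,t}f$ or of $m^q_{\eta,1}f$, lowers the threshold and shrinks $E$. With $\mi=\delta_1$ and $f(s,y)=\chi_{\Set{1}}(s)\chi_{(-1,1)}(y)$ on $\R$ one gets $m^q_{\eta,1}f=\chi_{\Set{z\colon\abs{z}<2-2\eta}}$. The $t=1$ slice of either variant then meets $B(0,1)$ in $\Set{y\colon\abs{y}\meg 1-2\eta}$, of measure $2(1-2\eta)_+<2(1-\eta)$, for every $\eta$.

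Density needs the threshold at every $y\in B(x,t^a)$ to dominate $(m^q_{\eta,a,t}f)(x)$, i.e.\ a supremum over centres. Set $\tilde m(y)\coloneqq\sup_{t\in(0,c]}\sup_{x\in B(y,t^a)}(m^q_{\eta,a,t}f)(x)$ and $E\coloneqq\Set{(t,y)\colon (\Gc^q_tf)(y)\meg\tilde m(y)}$. Then:
\begin{itemize}
\item Since the threshold does not depend on $t$, the slices are initial segments, and $\Gc^q(\chi_Ef)\meg\tilde m$ by monotone convergence.
\item Density follows from the centre estimate, because $y\in B(x,t^a)$ implies $x\in B(y,t^a)$.
\item $\tilde m$ is lower semicontinuous and $\sup\tilde m=\sup m^q_{\eta,a}f=\norm{m^q_{\eta,a}f}_{L^\infty(\beta)}$ by Remark~\ref{oss:5}, so the $L^\infty$ chain holds as stated.
\item $\Set{\tilde m>\eps}\subseteq\Set{\Mc^{(2c^a)}(\chi_{\Set{\Gc^qf>\eps}})>\eta/C}$ by local doubling, whence $\norm{\chi_Ef}_{L^{q,p}(\mi,\beta)}\meg\norm{\tilde m}_{L^p(\beta)}\meg C\norm{f}_{L^{q,p}(\mi,\beta)}$.
\end{itemize}
The only loss is that the first $L^p$ inequality holds with $\tilde m$ in place of $m^q_{\eta,a}f$. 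The later use of the lemma (Lemma~\ref{lem:60} inside Theorem~\ref{teo:6b}) needs only the density property and the $L^\infty$ chain.
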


Cf.~\cite[Proposition 5.5]{FrazierJawerth} for the classical case.

\begin{proof}
	We may assume that  $\beta=\beta_L$.
	Observe that, for every $\eps>0$,
	\[
	\Set{x\in G\colon (m^q_{\eta,a} f)(x)>\eps}\subseteq \Set{  x\in G\colon  \Mc^{(c^a)}(\chi_{ \Set{y\in G\colon (\Gc^q f)(y)>\eps} }  )>\eta },
	\]
	where $c=\max \supp \mi$ and $(\Mc^{(c^a)}g)(y)\coloneqq \sup_{r\in (0,c^a]}\dashint_{B(y,r)} \abs{g}\,\dd \beta$ for every $\beta$-measurable function $G$ and for every $y\in G$. Since $\Mc^{(c^a)}$ is of weak type $(1,1)$ (cf., e.g.,~\cite[Lemma 4]{Singular}), there is a constant $C_1>0$ such that
	\[
	\beta\big(\Set{x\in G\colon (m^q_{\eta,a} f)(x)>\eps}\big)\meg  C_1\beta(  \Set{y\in G\colon (\Gc^q f)(y)>\eps}).
	\]
	Consequently,
	\[
	\begin{split}
	\norm{m^q_{\eta,a} f}_{L^p(\beta)}^p&=p \int_0^{+\infty} t^{p-1} \beta\big(\Set{x\in G\colon (m^q_{\eta,a} f)(x)>\eps}\big)\,\dd t\\
		&\meg C_1 p \int_0^{+\infty} t^{p-1} \beta(  \Set{y\in G\colon (\Gc^q f)(y)>\eps})\,\dd t\\
		&=C_1 \norm{ \Gc^q f }_{L^p(\beta)}^p\\
		&=C_1 \norm{f}_{L^{q,p}(\mi,\beta)}^p.
	\end{split}
	\]
	Now, take $C_2>0$ so that $C_2^{-1}r^{Q_*}\meg \beta(B(e,r))\meg C_2 r^{Q_*}$ for every $r\in (0,c^a]$. If $q<\infty$, then
	\[
	\begin{split}
		\beta\Big(\Set{y\in B(x,t^a)\colon (\Gc^q_{t} f)(y)> (C_2/\eta)^{1/q} \norm{f}_{\Cc^q(\mi,a)}  }\Big)&\meg  \frac{\eta}{C_2  \norm{f}_{\Cc^q(\mi,a)}^q} \int_{B(x, t^a)} (\Gc^q_t f)^q\,\dd \beta\\
		&\meg \frac{\eta}{C_2 }  t^{aQ_*}  \\
		&\meg \eta\beta(B(e,t^a)),
	\end{split}
	\]
	so that $(m^q_{\eta,a,t} f)(x)\meg (C_2/\eta)^{1/q} \norm{f}_{\Cc^q(\mi,a)}$ for every $x\in G$ and for every $t\in (0,c]$. Consequently, $\norm{m^q_{\eta,a}f}_{L^\infty(\beta)}\meg (C_2/\eta)^{1/q}  \norm{f}_{\Cc^q(\mi,a)}$. 
	In a similar way, one may see that $\norm{m^\infty_{\eta,a} f}_{L^\infty(\beta)}\meg \norm{f}_{\Cc^\infty(\mi,a)}$.
	
	For the remaining inequalities, define, for every $j\in\N$,
	\[
	\theta_j(x)\coloneqq \sup\Set{ t\in (0,c]\colon ( \Gc_t^q f)(x)\meg (m^q_{\eta,a} f)(x) +2^{-j}}
	\]
	for every $x\in G$,	and observe that $\theta_j$ is a $\beta$-measurable function since 
	\[
	\Set{x\in G\colon \theta_j(x) \meg \alpha}= \bigcap_{k\in\N} \Set{x\in G\colon  ( \Gc_{\alpha+ 2^{-k}}^q f)(x)> (m^q_{\eta,a} f)(x)+2^{-j}}
	\]
	for every $\alpha\in (0,c)$. In addition, define
	\[
	E_j\coloneqq \Set{(t,x)\in  (0,\infty)\times G\colon  t< \theta_j(x)     },
	\]
	so that $E_j$ is $(\mi \otimes \beta)$-measurable. Observe that, for every $x\in G$ and for every $t\in (0,c]$,  
	\[
	[\Gc^q_t(\chi_{E_j} f) ](x)=\lim_{s\to (\theta_j(x))^-} ( \Gc_s^q f)(x)\meg (m^q_{\eta,a} f)(x)+2^{-j} 
	\] 
	if $t\Meg \theta_j(x)$, whereas
	\[
	[\Gc^q_t(\chi_{E_j} f) ](x)=  ( \Gc_t^q f)(x)\meg (m^q_{\eta,a} f)(x)+2^{-j} 
	\]
	otherwise.
	In addition,  
	\[
	\begin{split}
		\Set{y\in B(x,t^a)\colon \theta_j(y)\meg  t  }&\subseteq  \bigcap_{t'>t}\Set{y\in B(x,t^a)\colon ( \Gc_{t'}^q f)(y)> (m^q_{\eta,a} f)(y)+2^{-j}}\\
		&\subseteq \Set{y\in B(x,t^a)\colon ( \Gc_{t}^q f)(y)\Meg  (m^q_{\eta,a} f)(y)+2^{-j}} \\
		&\subseteq \Set{y\in B(x,t^a)\colon ( \Gc_{t}^q f)(y)\Meg  (m^q_{\eta,a,t} f)(y)+2^{-j}} , 
	\end{split}
	\]
	so that
	\[
	\beta(\Set{y\in B(x,t^a)\colon \theta_j(y)\meg  t  } )\meg \eta\beta(B(e,t^a)).
	\]
	Now, observe that $(\theta_j(x))$ is a decreasing sequence for every $x\in G$, so that $(E_j)$ is a decreasing sequence of $(\mi\otimes \beta)$-measurable subsets of $(0,+\infty)\times G$. Set $E\coloneqq \bigcap_j E_j$. Then,
	\[
	\beta(\Set{y\in B(x,t^a)\colon (t,y)\in E  } )=\lim_{j\to \infty}\beta(\Set{y\in B(x,t^a)\colon \theta_j(y)> t  } )\Meg (1-\eta) \beta(B(e,t^a)). 
	\]
	In addition, by Fatou's lemma,
	\[
	[\Gc^q_t(\chi_{E} f) ](x)\meg \lim_{j\to \infty} [\Gc^q_t(\chi_{E_j} f) ](x)\meg (m^q_{\eta,a} f)(x) 
	\]
	for every $t\in (0,c]$ and for every $x\in G$,
	so that
	\[
	\norm{\chi_E f}_{L^{q,p}(\mi,\beta)}\meg \norm{m^q_{\eta,a} f}_{L^p(\beta)}
	\]
	and
	\[
	\norm{\chi_E f}_{L^{q,\infty}(\mi,\beta)}\meg \norm{m^q_{\eta,a} f}_{L^\infty(\beta)}.
	\]
	The assertion follows.
\end{proof}

\begin{lem}\label{lem:60}
	Take $p\in (0,\infty)$, $q\in (0,\infty]$, $\eta\in (0,1)$,   a positive Radon measure  $\mi $ on $(0,+\infty)$ with bounded support, and $a,c,c'>0$. Then, there is a constant $C>0$ such that 
	\[
	\norm{ f}_{L^{q,p}  (\mi,\beta)}\meg C \norm{\chi_E g}_{L^{q,p}  (\mi,\beta)}
	\]
	and
	\[
	\norm{f}_{\Cc^{q} (\mi,a)}\meg C \norm{\chi_E g}_{L^{q,\infty} (\mi,\beta)}
	\]
	for every $(\mi\otimes \beta)$-measurable functions $f,g\colon (0,+\infty)\times G\to [0,+\infty]$  and for every $(\mi\otimes \beta)$-measurable subset $E$ of $(0,\infty)\times G$ such that 
	\begin{equation}\label{eq:2}
	f(t,x)\meg c g(t,y)
	\end{equation}
	for $(\mi\otimes \beta \otimes \beta)$-almost every $(t,x,y)\in (0,+\infty)\times G\times G$ such that $(t,y)\in E$ and $d(x,y)<\kappa t^a$, and such that
	\[
	\beta  (\Set{y\in B(x,c' t^{a})\colon (t,y)\in E }  )\Meg \eta \beta(B(x,c' t^{a}))  
	\]
	for $(\mi\otimes \beta)$-almost every $(t,x)\in (0,+\infty)\times G$.
\end{lem}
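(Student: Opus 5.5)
Both estimates come from one pointwise inequality. Condition~\eqref{eq:2} transfers the size of $f$ at $(t,x)$ to the size of $g$ at any point $(t,y)\in E$ with $y$ near $x$. The density assumption on $E$ guarantees there are many such $y$. So $f(t,x)$ can be dominated by an average of $\chi_E g$ over a ball around $x$.

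We may assume $\beta=\beta_L$, since the two sides change by comparable factors on balls of bounded radius. We also note that $\kappa$ in~\eqref{eq:2} is implicitly at least $c'$; otherwise we first shrink $c'$, using that $\beta(B(e,r))\asymp r^{Q_*}$ for small $r$. By Lemma~\ref{lem:61bis} we may further reduce to $a=1$.

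\textbf{Pointwise averaging.} Fix $r\in(0,\infty)$; for $p$ we take $r<\min(p,q)$, and for the Carleson estimate we take $r=q$ if $q<\infty$. For almost every $(t,x)$, raise~\eqref{eq:2} to the power $r$ and average over $y\in B(x,c't)$ with $(t,y)\in E$. By the density hypothesis this set has measure at least $\eta\beta(B(e,c't))$, so
\[
f(t,x)^r\meg \frac{c^r}{\eta\,\beta(B(e,c't))}\int_{B(x,c't)}\chi_E(t,y)g(t,y)^r\,\dd\beta(y).
\]
For $t$ in the bounded support of $\mi$, the right-hand side is bounded by $C\,\Mc^{(R)}\big((\chi_E g)(t,\cdot)^r\big)(x)$, where $\Mc^{(R)}$ is the local maximal operator used in the proof of Lemma~\ref{lem:59} with $R=c'\max\supp\mi$.

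\textbf{Mixed-norm estimate.} We must bound $\norm{\norm{f(\cdot,x)}_{L^q(\mi)}}_{L^p_x}$. From the pointwise bound,
\[
\norm{f}_{L^{q,p}}^r\meg C\,\big\|\,\|\Mc^{(R)}(h_t)\|_{L^{q/r}_t(\mi)}\big\|_{L^{p/r}_x},
\qquad h_t=(\chi_E g)(t,\cdot)^r,
\]
with $q/r,p/r>1$. Here we apply the vector-valued Fefferman--Stein inequality for the local maximal operator on $G$, which holds for $\ell^{q/r}$-valued functions on balls of bounded radius since $G$ is locally doubling. We expect this to be the main obstacle: the paper does not state this inequality, so it has to be justified. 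Our first approach is to deduce it from the Calderón--Zygmund theory for local operators on locally doubling spaces, as in~\cite{Singular}. If only scalar bounds are available, a fallback is to use Minkowski in $t$ when $q\meg p$. The case $q=\infty$ needs only the scalar maximal theorem, since $\sup_t\Mc h_t\meg\Mc(\sup_t h_t)$.

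\textbf{Carleson estimate.} Here no maximal function is needed. Take $r=q<\infty$ and integrate the averaged inequality over $(0,s]\times B(x_0,s)$ with respect to $\mi\otimes\beta$. Fubini's theorem bounds the result by
\[
C\int_{(0,s]}\int_{B(x_0,(1+c')s)}(\chi_E g)^q\,\dd\beta\,\dd\mi
\meg C\,\beta(B(e,(1+c')s))\,\norm{\chi_E g}^q_{L^{q,\infty}(\mi,\beta)},
\]
because the inner integral is at most $\beta(B)\sup_y\int(\chi_E g)(t,y)^q\,\dd\mi(t)$. Dividing by $s^{Q_*}$ and using $\beta(B(e,(1+c')s))\asymp s^{Q_*}$ for $s\meg\max\supp\mi$ gives the claim. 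When $q=\infty$ the pointwise bound with an $L^\infty$ average gives $f\meg c\norm{\chi_E g}_{L^\infty}$ directly.
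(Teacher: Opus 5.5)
Your argument is essentially the paper's: average \eqref{eq:2} over the part of $E$ lying in $B(x,c't^a)$, use the vector-valued bound for the truncated maximal function (the paper cites \cite[Theorem 13]{Singular}) for the mixed norm, and integrate the averaged bound over $(0,t]\times B(x,t^a)$ for the Carleson estimate, where your Fubini computation makes explicit the paper's ``in a similar way''. Two side remarks are false and should be dropped, though neither is needed: first, density of $E$ in balls of radius $c't^a$ gives no density in smaller balls, so you cannot shrink $c'$, and the argument genuinely requires $\kappa\Meg c'$ (as the paper implicitly assumes); second, Minkowski's inequality plus the scalar maximal theorem handles only $p=q$ and not all $q\meg p$, because the final exchange of the $t$- and $x$-norms goes the wrong way.
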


Cf.~\cite[Proposition 2.7]{FrazierJawerth} for the classical case.

	In particular, given $b>0$, $d>1$, and a $\mi$-measurable function $\kappa \colon (0,+\infty)\to (0,+\infty)$ such that $\sup_{t>0} t/\kappa(t)$ is finite,   the statement applies to 
	\[
	f(t,x)= [T_{b,\kappa(t),d}h(t,\,\cdot\,)](x)\qquad \text{and}\qquad  g(t,x)=[T_{2^{1/(d-1)}b,\kappa(t),d}h(t,\,\cdot\,)](x)
	\]
	for every $(\mi\otimes \beta)$-measurable function $h\colon (0,+\infty)\times G\to [0,+\infty]$, 
	provided that $a=1/d$ and $c$ is sufficiently large, thanks to Lemma~\ref{lem:32}.
	
	Notice that, if $f$ and $g$ satisfy~\eqref{eq:2} for $(\mi\otimes \beta \otimes \beta)$-almost every $(t,x,y)\in (0,+\infty)\times G\times G$ such that  $d(x,y)<\kappa t^a$, then for every $b>0$ there is a constant $c'>0$ such that $f(t,\,\cdot\,)\meg c' T_{b,t,1/a}(g(t,\,\cdot\,))$ $\beta$-almost everywhere for $\mi$-almost every $t>0$. Conversely, if $f(t,\,\cdot\,)\meg c'' T_{b,t,1/a}(g'(t,\,\cdot\,))$ $\beta$-almost everywhere for $\mi$-almost every $t>0$, then one may take $g(t,x)= T_{b/2,t,1/a}(g'(t,\,\cdot\,))(x)$ in the above statement, thanks to Lemma~\ref{lem:32}. In other words, this formulation essentially `contains' those of Lemma~\ref{lem:26bis},~\ref{lem:25b}, and~\ref{lem:25bisb} below. We chose different formulations for convenience in the applications. 

\begin{proof} 
	Set $p_0\coloneqq \min(p/2,q/2,1)$, and observe that  
	\[
	f(t,x)\meg c \eta^{-1/p_0}\left( \dashint_{ B(x,c' t^{a})} \chi_E(t,y)g(t,y)^{p_0}\,\dd \beta(y)\right) ^{1/p_0} 
	\]
	for $(\mi\otimes \beta)$-almost every $(t,x)\in (0,+\infty)\times G$. By the (vector-valued) boundedness of the truncated Hardy--Littlewood maximal function (cf.~\cite[Theorem 13]{Singular}), we then see that there is a constant $C_1>0$ such that
	\[
	\norm{f}_{L^{q,p}(\mi,\beta)}\meg C_1 \norm{\chi_E g}_{L^{q,p} (\mi,\beta)}.
	\]
	In a similar way, we see that there is a constant $C_2>0$ such that
	\[
	\begin{split}
		\norm{\chi_{(0,t]\times B(x,t^{a})}  f }_{L^q (\mi\otimes \beta)}& \meg C_2\norm{\chi_{(0,t]\times B(x,(1+c')t^{a})} \chi_E g }_{L^q (\mi\otimes \beta)}\\
		&  \meg C_2 \beta(e, (1+c') t^{a})^{1/q}  \norm{  \chi_E g }_{L^{q,\infty} (\mi, \beta)}
	\end{split}
	\]
	for  every $(t,x)\in (0,c']\times G$, where $c'=\max \supp \mi$. It then follows that there is a constant $C_3>0$ such that
	\[
	t^{-aQ_*/q}\norm{\chi_{(0,t]\times B(x,t^{a})} f }_{L^q (\mi\otimes \beta)}\meg C_3 \norm{\chi_E g}_{L^{q,\infty} (\mi,\beta)},
	\]
	for every $(t,x)\in (0,c']\times G$.
	The assertion follows. 
\end{proof}

\section{Some Technical Lemmas}\label{sec:4}

We now pass to the modifications that are needed to study the spaces $F^{\infty,q}_\alpha(\beta)$. We begin with the technical lemmas.
The next two lemmas replace~\cite[Lemmas 6.7 and 6.8]{BCP}.

\begin{lem}\label{lem:26bis}
	Take $\alpha\in \R$, $ q\in [1,\infty]$,   $\mi\in \Mc_\Car$,   $\nu\in \Mc_\RC$, $ c,d>1$, and $b>0$. Then, there are $\kappa>1$ and $C>0$ such that, for every $(\mi\otimes \beta)$-measurable function $f\colon (0,+\infty)\times G\to [0,+\infty]$ and for every $(\nu\otimes \beta)$-measurable function $g\colon (0,+\infty)\times G\to [0,+\infty]$  such that    $ f(t,\,\cdot\,)\meg c  T_{b,c t,d} [g(s,\,\cdot\,)]$ $\beta$-almost everywhere for $(\mi\otimes \nu)$-almost every $(t,s)\in (0,+\infty)^2$ such that $t/\kappa^2\meg s\meg t/\kappa $,
	\[
	\norm*{ t^{\alpha/d} f(t,x) }_{\Cc^q_{(t,x)}(\mi,1/d)}\meg C \norm*{ t^{\alpha/d} g(t,x) }_{\Cc^q_{(t,x)}(\nu,1/d)}.
	\] 
\end{lem}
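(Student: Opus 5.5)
The plan is to reduce the statement to Lemma~\ref{lem:2bis}, after averaging the pointwise hypothesis over the scales $s\in[t/\kappa^2,t/\kappa]$, which carry uniformly positive $\nu$-mass.

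\textbf{Choice of $\kappa$.} Since $\nu\in\Mc_\RC$, there are $\eps,C_0\in(0,1)$ with $\nu((\eps r,r])\Meg C_0$ for $r\in(0,\eps]$. We take $\kappa\coloneqq 1/\eps$. Then for every $t\in(0,\eps^2]$ the interval $I_t\coloneqq[t/\kappa^2,t/\kappa]$ satisfies $\nu(I_t)\Meg C_0$. The $t$ with $t>\eps^2$ lie in the bounded support of $\mi$, hence in a compact range. For these we will enlarge the window by a bounded factor (admissible after a harmless rescaling via Lemma~\ref{lem:61}), or note directly that only finitely many dyadic blocks are involved.

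\textbf{Averaging the hypothesis.} For $\mi$-a.e.\ $t\meg\eps^2$ we average $f(t,\cdot)\meg cT_{b,ct,d}[g(s,\cdot)]$ in $L^q$ over $s\in I_t$ with respect to $\nu$. By Jensen's inequality (the convex case $q\Meg1$),
\[
t^{\alpha q/d}f(t,x)^q\meg C\,\nu(I_t)^{-1}\int_{I_t} s^{\alpha q/d}\,T_{b',ct,d}[g(s,\cdot)^q](x)\,\dd\nu(s),
\]
with $t\asymp s$ absorbing the power $t^{\alpha/d}$. The case $q=\infty$ is direct, using $\norm{p_{b,t,d}}_{L^1}\meg C\ee^{Ct}$ from Lemma~\ref{lem:3}, since $t$ is bounded.

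\textbf{Testing on a Carleson box.} We now evaluate on a box $(0,r]\times B(x_0,r^{1/d})$. Integrating in $\mi(t)$ and applying Fubini, each $s$ is hit only by the $t\in[\kappa s,\kappa^2 s]$. Since $\mi\in\Mc_\Car$, the set of such $t$ has bounded $\mi$-mass. The kernel $T_{b',ct,d}$ with $ct\asymp s$ is dominated, via the first point of Lemma~\ref{lem:18}, by $T_{b'',c's,d}$ for some $b''>0$ and $c'>0$. This yields a bound by
\[
\norm*{\chi_{(0,r/\kappa]\times B(x_0,r^{1/d})}\,\tilde T\bigl(s^{\alpha q/d}g^q\bigr)}_{L^1(\nu\otimes\beta)},
\]
where $(\tilde T h)(s,\cdot)=T_{b'',c's,d}[h(s,\cdot)]$.

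\textbf{Conclusion via Lemma~\ref{lem:2bis}.} We apply Lemma~\ref{lem:2bis} with exponent $1$, the measure $\nu$ restricted to a bounded set, $a=1/d$, and $\kappa(s)=c's$. For bounded $s$ we have $c's\meg C s^{ad}$, so the hypothesis of that lemma holds, and the factor $\ee^{-\omega\kappa(s)}$ is bounded below. This bounds the expression above by the $\Cc^1(\nu,1/d)$ norm of $s^{\alpha q/d}g^q$, which equals $\norm{s^{\alpha/d}g}_{\Cc^q(\nu,1/d)}^q$. Normalising by $r^{-Q_*/d}$ gives the claim. If $\nu$ does not have bounded support, we first restrict it to $(0,\eps]$, which is all that is used.

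\textbf{Main obstacle.} The delicate step is passing the $L^q$ power inside $T$ uniformly while keeping the Gaussian parameter, and controlling the mismatch between the $t$-box and the $s$-box. Lemma~\ref{lem:2bis} is the tool that absorbs the non-local tails, so the argument should go through as outlined.
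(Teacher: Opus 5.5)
\textbf{Gap: the choice of $\kappa$.} Your choice $\kappa=1/\eps$ does not work, and the difficulty is not peripheral. With $\kappa=1/\eps$ you only control $\nu([t/\kappa^2,t/\kappa])$ from below for $t\meg 1$. For larger $t\in\supp\mi$ the window $[t/\kappa^2,t/\kappa]$ may be $\nu$-null, for instance whenever $t>\kappa^2\max\supp\nu$, and then the hypothesis imposes no condition at all on $f(t,\,\cdot\,)$.

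So with your $\kappa$ the lemma is false as soon as $\max\supp\mi>\kappa^2\max\supp\nu$. Take $g=0$ and let $f$ be a large constant on $(\kappa^2\max\supp\nu,\max\supp\mi]\times G$ and $0$ elsewhere. The hypothesis holds vacuously, while the box at $t=\max\supp\mi$ makes the left-hand side positive. A concrete instance is $\mi=\chi_{(0,100]}\frac{\dd t}{t}$, $\nu=\mi_1$, $\eps=1/2$.

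Neither of your patches repairs this. Lemma~\ref{lem:61} rescales the time variable of a single function and cannot change the relation between $t$ and $s$ fixed by the hypothesis. The finiteness of the number of blocks is irrelevant when $f$ is unconstrained on them. The missing idea is that $\kappa$ must depend on $\max\supp\mi$ as well as on the reverse Carleson constants of $\nu$. For example, take $\kappa\Meg \max(1,\max\supp\mi)/\eps$. Then $t/\kappa\meg\eps$ and $(\eps t/\kappa,t/\kappa]\subseteq[t/\kappa^2,t/\kappa]$, hence $\nu([t/\kappa^2,t/\kappa])\Meg C_0$ for \emph{every} $t\in\supp\mi$. This is what the paper does: it takes $N$ with $\supp\mi\subseteq(0,\eps^{-N}]$ and $\kappa=\eps^{-2N-1}$, so that each $\eps$-adic block of $t$ is matched with an $s$-block shifted down by the fixed factor $\eps^{2N}$.

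\textbf{The rest of your argument.} With this correction your argument goes through, modulo two small points. First, the box $(0,r/\kappa]\times B(x_0,r^{1/d})$ has a ball $\kappa^{1/d}$ times too large, so you need Lemma~\ref{lem:63} before invoking the $\Cc^1(\nu,1/d)$ norm. Second, the kernel comparison for $t\in[\kappa s,\kappa^2 s]$ comes from monotonicity of $p_{b,\tau,d}$ in $\tau$, giving $p_{b,ct,d}\meg \kappa^{Q_*/d}p_{b,c\kappa^2 s,d}$. It does not come from the first item of Lemma~\ref{lem:18}, which concerns a change of $d$.

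Your route then differs from the paper's in the middle step.
\begin{itemize}
\item The paper bounds $f_t$ on each $t$-block by the essential minimum over the matched $s$-block, and controls that minimum by the $L^q(\nu_h)$ norm. It then sums over blocks in $\ell^q$ and applies Lemma~\ref{lem:2bis} at exponent $q$.
\item You move the $q$-th power inside the operator via Jensen, as the paper itself does inside the proof of Lemma~\ref{lem:2bis}, and average over the window. You then exchange the integrals using $\mi([\kappa s,\kappa^2 s])\meg C$ and apply Lemma~\ref{lem:2bis} only at exponent $1$ to $s^{\alpha q/d}g^q$.
\end{itemize}
Both work; yours avoids the block bookkeeping.
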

 
Notice that the assumption on $f$ and $g$ is trivially met if $ f(t,\,\cdot\,) \meg c' T_{b,c'(t-s),d}[g(s,\,\cdot\,)]$ for a suitable $c'>0$ and for $(\mi\otimes \nu)$-almost every $(t,s)$ such that $t>s>0$.  

\begin{proof}
	We may assume that $\beta=\beta_L$.  Set $c'\coloneqq \max \supp \mi$ and $c''\coloneqq \max \supp \nu$.  
	Take $\eps\in (0,1)$ and $N\in\N$ so that $N\Meg 1$, $\supp \mi\subseteq (0,\eps^{-N}]$,
	\[
	\nu((\eps t, t])\Meg \eps^N
	\]
	for every $t\in (0,\eps^N]$, and
	\[
	\mi((\eps t,t])\meg \eps^{-N}
	\]
	for every $t>0$ (cf.~\cite[Lemma 6.2]{BCP}), and set $\kappa\coloneqq \eps^{-2N-1}$.	In addition, write $f_t$ and $g_t$ instead of $f(t,\,\cdot\,)$ and $g(t,\,\cdot\,)$, respectively, for every $t>0$.
	Observe that, for $(\mi\otimes \nu)$-almost every $(t,s)\in [\eps^{-N+h+1},\eps^{-N+h}]\times[\eps^{N+h+1},\eps^{N+h}]$, 
	\[
	 {f_t}\meg c T_{b,c t,d}  g_s\meg c \eps^{-Q_*/d}  T_{b,c\eps^{-N+h},d} g_s
	\]
	since $t/\kappa^2\meg s\meg t/\kappa $. In particular,
	\[
	 {f_t} \meg c \eps^{-Q_*/d} \underset{s\in [\eps^{N+h+1},\eps^{N+h}]}{\essmin_\nu} T_{b,c\eps^{-N+h},d} g_s
	\]
	for $\mi$-almost every $t\in [\eps^{-N+h+1},\eps^{-N+h}]$. For every $h\in\N$, set $\mi_h\coloneqq \chi_{(\eps^{-N+h+1},\eps^{-N+h}]}\cdot \mi$ and $\nu_h\coloneqq \chi_{(\eps^{N+h+1},\eps^{N+h}]}\cdot \nu$
	Then, Lemmas~\ref{lem:63} and~\ref{lem:2bis} shows that there is a constant $C_1>0$ such that, for every $t\in (0,c']$ and for every $x\in G$, choosing $h_0\in \N$ so that $t\in (\eps^{-N+h_0+1},\eps^{-N+h_0}]$,
	\[
	\begin{split}
		&t^{-Q_*/(dq)}\norm*{\chi_{(0,t ]\times B(x,t^{1/d})}(s,y) s^{\alpha/d} f_s(y) }_{L^q_{(s,y)}(\mi \otimes \beta)}\\
		&\qquad = t^{-Q_*/(dq)}\norm*{\norm*{  \chi_{(0,t ]\times B(x,t^{1/d})}(s,y) s^{\alpha/d} f_s(y) }_{L^q_{(s,y)}( \mi _h\otimes \beta)}}_{\ell_h^q(\N)}\\
		&\qquad \meg  t^{-Q_*/(dq)}\norm*{\norm*{  \chi_{B(x,t^{1/d})}(y) s^{\alpha/d} f_s(y) }_{L^q_{(s,y)}( \mi _h\otimes \beta)}}_{\ell_h^q(\N+h_0)}\\
		&\qquad\meg c  t^{-Q_*/(dq)}  \eps^{-(N\alpha+\alpha_-+Q_*)/d -N/q}\norm*{\norm{ \eps^{h\alpha/d}\chi_{  B(x,t^{1/d})}(y) \underset{s\in [\eps^{N+h+1},\eps^{N+h}]}{\essmin_\nu} T_{b,c\eps^{-N+h},d}  g_{s}(y)  }_{\ell^q_h(\N+h_0)}}_{L^q_y(\beta)}\\
		&\qquad\meg c   t^{-Q_*/(dq)} \eps^{-(N\alpha+\alpha_-+Q_*)/d -2N/q}\norm*{\eps^{h\alpha/d}\norm{ \chi_{ B(x,t^{1/d})}(y) T_{b,c\eps^{-N+h},d}  g_{s}(y) }_{L^q_{(s,y)}(\nu_h\otimes \beta)} }_{\ell^q_h(\N+h_0)} \\
		&\qquad\meg c    t^{-Q_*/(dq)}  \eps^{-(2N\alpha+\abs{\alpha}+2Q_*)/d -2N/q }\norm*{ \chi_{(0,t ]\times B(x,t^{1/d})}(s,y) s^{\alpha/d} T_{b,c s\eps^{-2N-1},d}  g_{s}(y) }_{L^q_{(s,y)}(\nu\otimes \beta  )}   \\
		&\qquad\meg C_2  \norm{ s^{\alpha/d}    g_{s }(y)  }_{\Cc^q_{(s,y)}(\nu,1/d)},
	\end{split}
	\]
	since $s \eps^{-2N}\meg \eps^{-N+h}\meg s \eps^{-2N-1}$ for every $s\in [\eps^{N+h+1},\eps^{N+h}]$. The assertion follows thanks to the arbitrariness of $x\in G$ and $t\in (0,c']$.
\end{proof}

\begin{lem}\label{lem:27bis}
	Take $\alpha\in \R$,    $ q\in [1,\infty]$, $m\in \N$,  $\mu\in \Mc_\RC$,  and $(f_t)_{t>0}\in \Sc'(G)^{(0,+\infty)}$ such that $f_{s+t}=\ee^{-s \Lc} f_t$ for every $t,s>0$, and such that   $\norm{t^{\alpha/\grado}  (\Lc^m f_{t})(x) }_{\Cc^{q}_{(t,x)}(\mi,1/\grado)} <\infty$. Then, there is a unique $f\in \Sc'(G)$ such that $f_t=\ee^{-t\Lc} f$ for every $t>0$.
\end{lem}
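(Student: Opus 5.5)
The plan is to reduce to the analogue already available in \cite[Lemma 6.8]{BCP}, which treats the case in which the hypothesis is a mixed $L^{q,p}$ (or $L^q(\mi;L^p)$) bound. Uniqueness is straightforward. If $f,f'\in\Sc'(G)$ both satisfy $\ee^{-t\Lc}f=\ee^{-t\Lc}f'=f_t$, then $\ee^{-t\Lc}(f-f')=0$ for every $t>0$. Letting $t\to 0^+$ gives $\ee^{-t\Lc}(f-f')\to f-f'$ in $\Sc'(G)$, because $\ee^{-t\Lc}\phi\to\phi$ in $\Sc(G)$ for $\phi\in\Sc(G)$ (as in \cite{BCP}). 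Hence $f=f'$. So the whole task is existence.

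For existence, I will first turn the $\Cc^q$ information into a pointwise or weighted $L^\infty$ bound on $\Lc^m f_t$ at a fixed scale.

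\emph{Step 1.} Use the semigroup law to write $\Lc^m f_{t}=\ee^{-(t-s)\Lc}\Lc^m f_s$ for $s<t$. Theorem~\ref{teo:7} then gives
\[
\abs{\Lc^m f_t}\meg C\ee^{\omega t}T_{b,t-s}(\Lc^m f_s).
\]

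\emph{Step 2.} Average this in $(s,y)$ over a set $(t/\kappa^2,t/\kappa]\times B(x,t^{1/\grado})$ of positive $\mi\otimes\beta$-measure. This is possible because $\mi\in\Mc_\RC$, as in the proof of Lemma~\ref{lem:26bis}, with $\mi$ in the role of $\nu$. Lemma~\ref{lem:32} lets me move the kernel centre from $x$ to $y$ at the price of a Gaussian factor. Combining these via Hölder on the averaging set, I expect the bound
\[
\abs{\Lc^m f_t(x)}\meg C_t\,\norm{s^{\alpha/\grado}\Lc^m f_s(y)}_{\Cc^q_{(s,y)}(\mi,1/\grado)}
\]
for all small $t$ and all $x\in G$. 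For larger $t$ the same bound follows by applying $\ee^{-(t-t_0)\Lc}$, which is bounded on $L^\infty$ with norm at most $C\ee^{\omega t}$. This shows that $\Lc^m f_t\in L^\infty(\beta)$, with norm growing at most like a negative power of $t$ as $t\to0$ and exponentially as $t\to\infty$. (If $\beta$ is not left-invariant, I first reduce to $\beta_L$, as in Proposition~\ref{prop:5}, keeping track of the $\Delta_L$ factors.)

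\emph{Step 3.} Once $\Lc^m f_t$ is known to be in $L^\infty$ with polynomial control near $0$, invoke the existing result \cite[Lemma 6.8]{BCP}, or rather its proof with $p=\infty$, which only used such $L^p$ bounds with $t^{-N}$ growth. To recall the construction: choose $\omega$ large, so that $\Lc_\omega$ is invertible on the relevant spaces. Then recover $f$ from $f_{t_0}$ and the $\Lc^m f_t$, $t\in(0,t_0]$, through a formula of the form
\[
f=\sum_{k<m}\frac{t_0^k}{k!}\,\Lc^k f_{t_0}\;+\;\frac{(-1)^m}{(m-1)!}\int_0^{t_0} s^{m-1}\Lc^m f_s\,\dd s
\]
(a Taylor expansion in $t$ of $\ee^{-t\Lc}$). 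Each term is paired with $\phi\in\Sc(G)$. The integral must be weakened by applying powers of $\Lc_\omega^{-1}$ to absorb the singularity $t^{-N}$ at $0$, for example by replacing $\Lc^m$ with $\Lc^{M}$ for large $M$ and using $\Lc^{M}f_s=\ee^{-(s/2)\Lc}\Lc^{M}f_{s/2}$, which gains $s^{-(M-m)}$ factors. The resulting integral converges in $\Sc'$ because $\langle\Lc^m f_s,\phi\rangle$ is controlled by $\norm{\Lc^m f_s}_{L^\infty}\norm{\phi}_{L^1}$. Finally, checking $\ee^{-t\Lc}f=f_t$ uses the semigroup identity and differentiation in $t$.

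\emph{Main obstacle.} I expect the main difficulty to be Step 2. The $\Cc^q$ norm only controls averages over Carleson boxes, not slices, so one must guarantee that the $\mi$-mass in $(t/\kappa^2,t/\kappa]$ is bounded below (the reverse Carleson property) in order to pass from the box average to a pointwise bound. One must also track the powers of $t$ so that the growth is only polynomial as $t\to0$.
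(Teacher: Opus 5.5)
Your plan is essentially the paper's proof. There, Lemma~\ref{lem:26bis} (which is where the reverse Carleson property of $\mi$ enters, as in your Step~2) first reduces to $\mi=\sum_{j}\delta_{2^{-j}}$; Theorem~\ref{teo:7} and Lemma~\ref{lem:32} then bound $\abs{(\Lc^m f_{2^{-j}})(x)}$ by an $L^q$-average over $B(x,2^{-j/\grado})$ of $T_{b/2,2^{-j-1}}\Lc^m f_{2^{-j-1}}$, Lemmas~\ref{lem:61} and~\ref{lem:2bis} turn this into $\norm{t^{\alpha/\grado}(\Lc^m f_t)(x)}_{L^\infty_{(t,x)}(\mi\otimes\beta)}<\infty$, and \cite[Lemma 6.8]{BCP} concludes. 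The one step your sketch glosses over as ``H\"older'' is exactly the use of Lemma~\ref{lem:2bis}: since $T_{b,\cdot}$ is non-local, the box average only controls the $\Cc^q$-norm of $s^{\alpha/\grado}T_{b',cs}(\Lc^m f_s)$ (for suitable $b',c>0$), not that of $s^{\alpha/\grado}\Lc^m f_s$, and removing $T$ requires summing the Gaussian tails over a lattice at scale $t^{1/\grado}$ against the at most exponential volume growth (also, in your recollection of \cite[Lemma 6.8]{BCP}, rewriting $\Lc^M f_s=\ee^{-(s/2)\Lc}\Lc^M f_{s/2}$ gains nothing, whereas moving the extra powers onto the test function, $\langle \Lc^M f_s,\phi\rangle=\langle \Lc^m f_s,(\Lc^{M-m})^\dag\phi\rangle$, does).
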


\begin{proof}
	By Lemma~\ref{lem:26bis} (applied with $f(t,x)=f_t(x)$ and $g(t,x)=f_{t/2}(x)$), we may reduce to the case in which $\mi=\sum_{j\in\N} 2^{-j}\delta_j$. 
	take $x\in G$ and $j\in \N$, and observe that by Theorem~\ref{teo:7} and Lemma~\ref{lem:32} we may find $C_1>1$ and $b>0$ such that
	\[
	\abs{(\Lc^m f_{2^{-j}})(x)}\meg C_1 (T_{b, 2^{-j-1}} \Lc^m f_{2^{-j-1}})(x)\meg C_1^2 (T_{b/2, 2^{-j-1}} \Lc^m f_{2^{-j-1}} f)(y)
	\]
	for every $y\in B(x,2^{-j/\grado })$. Consequently, there is a constant $C_2>0$ such that
	\[
	\begin{split}
		2^{-(j\alpha+Q_*)/\grado}\abs{(\Lc^m f_{2^{-j}})(x)} &\meg C_2 2^{-j\alpha/\grado}\norm{\chi_{B(x,2^{-j/\grado})} T_{b/2, 2^{-j-1}} \Lc^m f_{2^{-j-1}} }_{L^q(\beta)}\\
		&\meg C_2 \norm{ s^{\alpha/\grado} \chi_{ (0,2^{-j}]\times B(x,2^{-j/\grado})}(s,y) (T_{b/2, s/2} \Lc^m f_{s/2} )(y) }_{L^q_{(s,y)}(\mi \otimes \beta)}\\
		&\meg C_2 \norm{ s^{\alpha/\grado} (T_{b/2, s/2} \Lc^m f_{s/2} )(y)}_{\Cc^q_{(s,y)}(\mi)}.
	\end{split}
	\]
	By Lemmas~\ref{lem:61} and~\ref{lem:2bis} it then follows that $
	\norm{t^{\alpha/\grado} (\Lc^m f_{t})(x)}_{L^\infty_{(t,x)}(\mi \otimes \beta)}<\infty$,	so that the assertion follows from~\cite[Lemma 6.8]{BCP}.
\end{proof}

The next two results provide suitable analogues of Lemma~\ref{lem:25} and~\ref{lem:25bis}, which cannot be applied directly in this context. They are inspired by~\cite[Lemma 4]{Rychkov}. 

\begin{lem}\label{lem:25b}
	Take $q\in [1,\infty]$, $c, d>1$, $ b,\eta,\delta>0$,   $\mi\in \Mc_\Samp$, $\nu\in \Mc_\Car$. Then, there are $\kappa>1$ and $C>0$ such that
	\[
	\norm*{  \int_0^{+\infty} \frac{s^\delta t^\eta}{(s+t)^{\delta+\eta}}  f(s,x) \,\dd \mi(s)}_{\Cc^q_{(t,x)}(\nu,1/d)}\meg  C \norm{  g }_{\Cc^q (\mi,1/d)}
	\] 
	for every $(\mi\otimes \beta)$-measurable functions $f,g\colon (0,+\infty)\times G\to [0,+\infty]$  such that   $ f(t,\,\cdot\,) \meg c  T_{b,c t,d} [g(s,\,\cdot\,)]$ $\beta$-almost everywhere for $(\mi\otimes \mi)$-almost every $(t,s)\in (0,+\infty)^2$ with $t/\kappa\meg s\meg t$, and such that $ f(t,\,\cdot\,) \meg c  T_{b,c t,d} [g(t,\,\cdot\,)]$ for $\mi$-almost every $t>0$. 
\end{lem}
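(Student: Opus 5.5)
We bound the integral by splitting the range of $s$ into the region $s\meg t/\kappa$ (small scales, where the kernel $s^\delta t^\eta/(s+t)^{\delta+\eta}$ is comparable to $(s/t)^\delta$) and $s>t/\kappa$ (large scales, where it is comparable to $(t/s)^\eta$). We may assume $\beta=\beta_L$ and, by Lemma~\ref{lem:61bis}, we may still work with $a=1/d$ as in the statement. Since $\mi\in\Mc_\Samp$, we first discretize: choose $\eps\in(0,1)$ and $N$ as in the proof of Lemma~\ref{lem:26bis}, so that $\mi((\eps r,r])\asymp 1$ for $r\meg\eps^N$ and $\mi((\eps r,r])\meg \eps^{-N}$ for all $r$. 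Write $I_h\coloneqq(\eps^{h+1},\eps^h]$, and for each dyadic-type block replace $f(s,\cdot)$ by $c\,T_{b,c s,d}[g(s',\cdot)]$ with $s'$ ranging in an earlier (finer) block, as in Lemma~\ref{lem:26bis}. Taking an essential minimum over $s'$ and then averaging in $L^q(\mi)$ over a block turns each block contribution into a quantity controlled by $\norm{\chi_{I_{h'}} g}$-type pieces.

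The estimate then reduces to a discrete Schur/Hardy-type inequality. Fix $(t,x)$ with $t$ in the support of $\nu$, and consider the local norm
\[
t^{-Q_*/(dq)}\norm*{\chi_{(0,t]\times B(x,t^{1/d})}(t',y)\int \frac{s^\delta t'^\eta}{(s+t')^{\delta+\eta}}f(s,y)\,\dd\mi(s)}_{L^q(\nu\otimes\beta)}.
\]
The contribution of the scales $s\meg t'$ splits into blocks $s\in I_h$ with $\eps^h\meg t'$. For each such block we use the hypothesis to bound $f(s,\cdot)$ by $T_{b,cs,d}g(s',\cdot)$ with $s'$ in the neighbouring smaller block ($s/\kappa\meg s'\meg s$), and the $s=t$ hypothesis for the diagonal block. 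Since $s\meg t'\meg t$, the kernel $T_{b,cs,d}$ only has width $\lesssim t^{1/d}$, and Lemma~\ref{lem:2bis} (with $\kappa(s)=cs$, which satisfies its hypothesis with $ad=1$) together with Lemma~\ref{lem:63} bounds the $L^q$ norm over $(0,t]\times B(x,t^{1/d})$ by $t^{Q_*/(dq)}\norm{g}_{\Cc^q(\mi,1/d)}$, uniformly in the block. The geometric factor $(s/t')^\delta$ makes the sum over blocks converge, via Young/Minkowski in $\ell^q$ as in Lemma~\ref{lem:25}, with the Carleson property of $\nu$ controlling the $t'$-integration.

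The scales $s>t'$ are the delicate part, because there $s$ may exceed $t$: the relevant $f(s,\cdot)$ lives at a coarser scale than the Carleson box $(0,t]\times B(x,t^{1/d})$, and the $\Cc^q$ norm of $g$ only controls boxes of size $s$. For a block $s\in I_h$ with $\eps^h>t$, we bound $f(s,y)$ for $y\in B(x,t^{1/d})$ pointwise. Using Lemma~\ref{lem:32}, $T_{b,cs,d}g(s',\cdot)(y)\lesssim T_{b/2,cs,d}g(s',\cdot)(y_0)$ for all $y,y_0\in B(x,s^{1/d})$; averaging over $y_0$ and $s'$ in the previous block, and applying Lemma~\ref{lem:2bis} to get $L^q$ control on the box of size $s$, yields
\[
\sup_{y\in B(x,t^{1/d})} f(s,y)\lesssim \norm{g}_{\Cc^q(\mi,1/d)}
\]
(with a constant independent of $s$). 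Then the local norm of this contribution is bounded by $\sum_{h}(t/\eps^h)^\eta\,\nu((0,t])^{1/q}\norm{g}_{\Cc^q}$. Here $\nu((0,t])$ is bounded by the Carleson property only up to a $\log$, which is the main obstacle. To handle it, we will keep the factor $(t'/s)^\eta$ inside the $L^q(\nu)$ norm in $t'$, so that $\norm{(t'/s)^\eta\chi_{(0,t]}(t')}_{L^q(\nu)}\lesssim (t/s)^\eta$ by summing the Carleson bound dyadically. Summing over $s>t$ then converges geometrically. The intermediate range $t'<s\meg t$ is treated as in the first case, with $(t'/s)^\eta$ providing convergence via Lemma~\ref{lem:25}. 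The parameter $\kappa$ is fixed at the start as $\eps^{-1}$, so that consecutive blocks satisfy $t/\kappa\meg s\meg t$.
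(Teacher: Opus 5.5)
Your argument is essentially the paper's. You split the $s$-integral at the box size $t$. For $s\meg t$ you use the Schur-type bound of Lemma~\ref{lem:25} together with the diagonal hypothesis, Lemma~\ref{lem:2bis} and Lemma~\ref{lem:63}; the block discretization is unnecessary here, since Lemma~\ref{lem:25} applies directly. For $s>t$ you get a uniform bound on $f(s,\cdot)$ from the cross-scale hypothesis, Lemma~\ref{lem:32}, averaging over $s'\in[s/\kappa,s]$ and Lemma~\ref{lem:2bis}, and finish with the Carleson estimate for $\norm{\chi_{(0,t]}(t')\int_t^{+\infty}(t'/s)^\eta\,\dd\mi(s)}_{L^q_{t'}(\nu)}$.

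One small correction. Taking $\kappa=\eps^{-1}$ is not enough, because the reverse Carleson property only gives $\mi((\eps r,r])\gtrsim 1$ for small $r$. You must take $\kappa$ large enough that $\mi([s/\kappa,s])$ is bounded below for every $s\in(0,\max\supp\mi]$, as the paper does, and at least $\eps^{-2}$ if you want every pair from consecutive blocks to be admissible.
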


This result applies, for example, to $f(t,x)=\abs{(W^{(m)}_t h)(x)}$ and $g(t,x)= \abs{(W^{(m)}_{\eps t} h)(x)}$ ($h\in \Sc'(G)$, $m\in\N$, $\eps\in (0,1)$), if $b$ is sufficiently small, but also to $f(t,x)=(T_{b',t} W^{(m)}_t h)(x)$ and $ g(t,x)= (T_{b'',\kappa' (1-\eps) t} W^{(m)}_{\eps t} h)(x)$ ($h\in \Sc'(G)$, $m\in\N$, $\eps\in (0,1)$, $\kappa'>1$), for suitably related $b,b',b''$. 

\begin{proof}
	We may assume that $\beta=\beta_L$. Set $c'\coloneqq \max\supp (\mi+\nu)$.
	Observe  that, by Lemmas~\ref{lem:25} and~\ref{lem:63}, there is a constant $C_1>1$ such that, for every $x'\in G$ and  for every $t'\in (0,c']$,
	\[
	\begin{split}
		&\norm*{ \chi_{(0,t']\times B(x',t'^{1/d})}(t,x)  \int_0^{+\infty} \frac{s^\delta t^\eta}{(s+t)^{\delta+\eta}} f(s,x) \,\dd \mi(s)}_{L^q_{(t,x)}(\beta \otimes\nu)}\\
		&\qquad \meg \norm*{ \chi_{(0,t']\times B(x',t'^{1/d})}(t,x)  \int_{t'}^{+\infty} \frac{s^\delta t^\eta}{(s+t)^{\delta+\eta}}  {f (s,x)}\,\dd \mi(s)}_{L^q_{(t,x)}(\beta \otimes\nu)}\\
		&\qquad \qquad+ C_1 \norm*{ \chi_{(0,t']\times B(x',t'^{1/d})}(t,x)   f(t,x) }_{L^q_{(t,x)}(\beta \otimes\mi)}\\
		&\qquad \meg \norm*{ \chi_{ (0,t']}( t)\int_{t'}^{+\infty} (t/s)^{\eta }   \norm{\chi_{B(x',t'^{1/d})} f(s,\,\cdot\,)  }_{L^q(\beta)}\,\dd \mi(s)}_{L^q_{t}( \nu)}\\
		&\qquad \qquad+ C_1^2 t'^{Q_*/(dq)} \norm*{   f }_{\Cc^q (\mi,1/d)} .
	\end{split}
	\]
	Since $ f(t,\,\cdot\,) \meg c  T_{b,c t,d} [g(t,\,\cdot\,)] $ for $\mi$-almost every $t>0$,  by means of Lemma~\ref{lem:2bis} we see that there is a constant $C_2>0$ such that $\norm*{   f }_{\Cc^q (\mi,1/d)}\meg C_2\norm*{g}_{\Cc^q (\mi,1/d)}$. 
	In addition,  by  Lemma~\ref{lem:32}, there is $ C_3>0$ such that  
	\[
	{f(s,x)}\meg c  (T_{b,c s,d} g(s',\,\cdot\,))(x)\meg  C_3  (T_{b/2,c s,d} g(s',\,\cdot\,))(y)
	\]
	for  $\mi$-almost every $s>0$, for $\mi$-almost every $s'\in [s/\kappa,s]$, for $\beta$-almost every $x\in G$,  and for every $y\in B(x,2 s^{1/d} )$. Consequently, there is a constant $C_4>0$ such that
	\[
	\begin{split}
		t'^{-Q_*/(dq)}\norm{\chi_{B(x',t'^{1/d})}f(s,\,\cdot\,) }_{L^q(\beta)}&\meg t'^{-Q_*/(dq)}\beta(B(e,t'^{1/d}))^{1/q}C_3  \min_{ B(x', 2 s^{1/d}-t'^{1/d} )}T_{b/2,c s,d} g(s',\,\cdot\,) \\
			&\meg C_4 s^{-Q_*/(dq)} \norm{\chi_{B(x',s'^{1/d})}  T_{b/2,\kappa s',d}g(s',\,\cdot\,) }_{L^q(\beta)}
	\end{split}
	\]
	for every $t'\in (0,c']$, for every $x'\in G$,  for $\mi$-almost every $s\Meg t'$, and for $\mi$-almost every $s'\in [s/\kappa,s]$.
	Now, arguing as in the proof of  Lemma~\ref{lem:26bis} and choosing $\kappa$ sufficiently large (so that $\mi([s/\kappa,s])$ is bounded from below for $s\in (0,\max\supp \mi]$), we deduce that there is a constant $C_5>0$ such that
	\[
	t'^{ -Q_*/(dq)} \norm{\chi_{B(x',t'^{1/d})} f(s,\,\cdot\,) }_{L^q(\beta)}\meg C_5 \norm{   (T_{b/2,\kappa s',d}g(s',\,\cdot\,) )(y) }_{\Cc^q_{(s',y)}(\mi,1/d)}
	\] 
	for every $t'\in (0,c']$, for every $x'\in G$, and for $\mi$-almost every  $s\Meg t'$.
	By Lemma~\ref{lem:2bis}  we then see that there is a constant $C_6>0$ such that
	\[
	\begin{split}
		t'^{-Q_*/(dq)}\norm{\chi_{B(x',t'^{1/d})} f(s,\,\cdot\,) }_{L^q(\beta)}&\meg C_6 \norm{  g  }_{\Cc^q (\mi,1/d)}\\ 
	\end{split}
	\]
	for every $c'\Meg s\Meg t'>0$ and for every $x'\in G$.
	The   assertion will then follow if we show that 
	\[
	\begin{split}
		\norm*{ \chi_{ (0,t']}( t)\int_{t'}^{+\infty} (t/s)^{\eta }  \,\dd \mi(s)}_{L^q_{t}( \nu)}&=\norm*{ \chi_{ (0,t']}( t) (t/t')^{\eta } \int_{t'}^{+\infty} (t'/s)^{\eta }  \,\dd \mi(s)}_{L^q_{t}( \nu)}\\
		&\meg 4^{\eta }\norm*{  \bigg(\frac{\sqrt{t' t}}{t'+t}\bigg)^{\eta } \int_{0}^{+\infty} \bigg(\frac{\sqrt{t' s}}{t'+s}\bigg)^{\eta }  \,\dd \mi(s)}_{L^q_{t}( \nu)}
	\end{split}
	\]
	is uniformly bounded for $t'\in(0,c']$. This follows from the proof of~\cite[Lemma 6.4]{BCP}.  
\end{proof}

\begin{lem}\label{lem:25bisb}
	Take $q\in[1,\infty]$, $c,d>1$, $a,b,\eta,\gamma>0$ and  let $\mi$ be a Haar measure on $(0,+\infty)$. Take $\nu\in \Mc_\Car$, and assume that either $\eta\Meg 1$ or $\nu\in L^\infty(\mi)\cdot \mi$.	 
	Then, there are $\kappa>1$ and   $C>0$ such that 
	\[
	\norm*{\int_0^\infty t^\eta s^\delta   f(t+s,x)\,\dd \mi(s)  }_{\Cc^q_{(t,x)}(\nu,1/d)} \meg C \norm{ t^{\delta+\eta}g(t,x)}_{\Cc^q_{(t,x)}(\mi,1/d)}
	\]
	for every $(\mi\otimes \beta)$-measurable functions $f,g\colon (0,+\infty)\times G\to [0,+\infty]$  such that   $ f(t,\,\cdot\,)\meg c  T_{b,c t,d} [g(s,\,\cdot\,)]$ $\beta$-almost everywhere for $(\mi\otimes \mi)$-almost every $(t,s)\in (0,+\infty)^2$ with $t/\kappa\meg s\meg t$,  such that $ f(t,\,\cdot\,)\meg c  T_{b,c t,d} [g(t,\,\cdot\,)]$ for $\beta$-almost every $t>0$, and such that $f(t,\,\cdot\,)=0$ for $\mi$-almost every $t>a$. 
\end{lem}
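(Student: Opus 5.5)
We follow the proof of Lemma~\ref{lem:25b}, replacing Lemma~\ref{lem:25} with Lemma~\ref{lem:25bis}. We may assume that $\beta=\beta_L$, and we set $c'\coloneqq \max\supp\nu$. Fix $x'\in G$ and $t'\in(0,c']$. We then have to bound
\[
t'^{-Q_*/(dq)}\norm*{\chi_{(0,t']\times B(x',t'^{1/d})}(t,x)\int_0^\infty t^\eta s^\delta f(t+s,x)\,\dd\mi(s)}_{L^q_{(t,x)}(\nu\otimes\beta)} .
\]
We split the $s$-integral into $s\meg t'$ and $s>t'$.

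\textbf{Small $s$.} For $t\meg t'$ and $s\meg t'$ we have $t+s\meg 2t'$. We apply Minkowski's inequality (the $L^q(\beta)$-norm over $B(x',t'^{1/d})$ passes inside the integral) and then Lemma~\ref{lem:25bis} in the variable $t$. The function to which it is applied is $u\mapsto \chi_{(0,2t']}(u)\norm{\chi_{B(x',t'^{1/d})}f(u,\,\cdot\,)}_{L^q(\beta)}$. This bounds the term by
\[
C\norm*{\chi_{(0,2t']\times B(x',t'^{1/d})}(u,y)\,u^{\delta+\eta}f(u,y)}_{L^q_{(u,y)}(\mi\otimes\beta)} .
\]
By Lemma~\ref{lem:63} (with $\kappa=2$), this is at most $Ct'^{Q_*/(dq)}\norm{u^{\delta+\eta}f}_{\Cc^q(\mi,1/d)}$. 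Since $f(u,\,\cdot\,)\meg cT_{b,cu,d}[g(u,\,\cdot\,)]$, Lemma~\ref{lem:2bis} with $\kappa(u)=cu$ gives $\norm{u^{\delta+\eta}f}_{\Cc^q(\mi,1/d)}\meg C\norm{u^{\delta+\eta}g}_{\Cc^q(\mi,1/d)}$. Here the weight $u^{\delta+\eta}$ commutes with the operator, since the operator acts only in $x$. The factor $\ee^{-\omega cu}$ is harmless because $f$ vanishes for $u>a$.

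\textbf{Large $s$.} For $s>t'$ we have $s\meg t+s\meg 2s$, and the ball $B(x',t'^{1/d})$ is small compared with the scale $(t+s)^{1/d}$. As in the proof of Lemma~\ref{lem:25b}, Lemma~\ref{lem:32} and the hypothesis $f(u,\,\cdot\,)\meg cT_{b,cu,d}[g(s',\,\cdot\,)]$ for $s'\in[u/\kappa,u]$ show the following: for $u=t+s$ and $\mi$-almost every such $s'$,
\[
t'^{-Q_*/(dq)}\norm{\chi_{B(x',t'^{1/d})}f(u,\,\cdot\,)}_{L^q(\beta)}\meg C u^{-Q_*/(dq)}\norm{\chi_{B(x',s'^{1/d})}T_{b/2,\kappa s',d}g(s',\,\cdot\,)}_{L^q(\beta)} .
\]
We choose $\kappa$ so large that $\mi([u/\kappa,u])$ is bounded from below; this is possible because $\mi$ is a Haar measure. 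We then average over $s'$, as in Lemma~\ref{lem:26bis}, and use Lemma~\ref{lem:2bis}. This gives
\[
t'^{-Q_*/(dq)}u^{\delta+\eta}\norm{\chi_{B(x',t'^{1/d})}f(u,\,\cdot\,)}_{L^q(\beta)}\meg C\norm{s^{\delta+\eta}g}_{\Cc^q(\mi,1/d)}
\]
uniformly in $u\meg a$. The powers of $u$ and $s'$ are comparable because $s'\asymp u$. It then suffices to show that
\[
\norm*{\chi_{(0,t']}(t)\int_{t'}^\infty t^\eta s^\delta (t+s)^{-\delta-\eta}\chi_{(0,a]}(t+s)\,\dd\mi(s)}_{L^q_t(\nu)}
\]
is bounded uniformly in $t'$. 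The integrand is at most $(t/s)^\eta$, so the $s$-integral is $\meg C(t/t')^\eta$, and $\norm{\chi_{(0,t']}(t)(t/t')^\eta}_{L^q_t(\nu)}$ is bounded because $\nu\in\Mc_\Car$.

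\textbf{Main obstacle.} The delicate step is the small-$s$ term. Lemma~\ref{lem:25bis} requires its hypothesis (either $\eta\Meg1$ or $\nu\in L^\infty(\mi)\cdot\mi$), which is why that assumption appears in the statement. One also has to localise correctly: the localisation $u\meg 2t'$ must be compatible with Lemma~\ref{lem:63}, and the $\Cc^q$ norm must be controlled on the right-hand side where the measure is the Haar measure $\mi$, which is not compactly supported. This last point is handled by the vanishing of $f$ beyond $a$, which lets us restrict to $\chi_{(0,a]}\cdot\mi$.
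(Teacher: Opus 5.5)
You follow the paper's proof closely. You split the $s$-integral at $s=t'$ and treat the small-$s$ part with Lemma~\ref{lem:25bis} plus Lemmas~\ref{lem:63} and~\ref{lem:2bis}. The large-$s$ part goes through Lemma~\ref{lem:32}, an average over $s'\in[u/\kappa,u]$, and Lemma~\ref{lem:2bis}. For the final scalar bound, your estimate $t^\eta s^\delta(t+s)^{-\delta-\eta}\le (t/s)^\eta$ followed by the Carleson property of $\nu$ is more direct than the paper's deferral to the proof of Lemma~\ref{lem:25b}. However, one step is not justified by the lemma you cite, and the paper's proof uses it the same way.

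In Lemma~\ref{lem:25bis}, the alternative ``$\eta\ge 1$ or $\nu\in L^\infty(\mu)\cdot\mu$'' constrains the exponent of the variable integrated against the Haar measure. It is there to tame the singularity of $\dd\mu$ at the origin. In your small-$s$ term that variable is $s$, which carries $s^\delta$, so you actually need ``$\delta\ge 1$ or $\nu\in L^\infty(\mu)\cdot\mu$''. The statement's ``$\eta\ge1$'' does not supply this; it seems to have been copied from Lemma~\ref{lem:25bis} without swapping exponents, as the stray $\gamma$ in the hypotheses suggests. So your remark that this is why the assumption appears does not hold.

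For general $F$ the scalar inequality you extract is in fact false. Take $q=1$, $\nu=\sum_{j\in\N}\delta_{2^{-j}}$, $\delta<1$, $t=t'=1$ and $F=\chi_{[1,1+\epsilon]}$: the left side is $\gtrsim\epsilon^\delta$ while the right side is $\lesssim\epsilon$.

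The remedy is to use the structure of $f$. For $s\le t$ and $\kappa\ge 2$ you have $[(t+s)/\kappa,t+s]\supseteq[2t/\kappa,t]$ and $p_{b,c(t+s),d}\le 2^{Q_*/d}p_{b,2ct,d}$. The hypothesis then bounds $\norm{\chi_{B(x',t'^{1/d})}f(t+s,\cdot)}_{L^q(\beta)}$ by a constant times the $L^q(\mu)$-average over $s'\in[2t/\kappa,t]$ of $\norm{\chi_{B(x',t'^{1/d})}T_{b,2ct,d}[g(s',\cdot)]}_{L^q(\beta)}$, which is independent of $s$. Since $\int_0^t (s/t)^\delta\,\dd\mu(s)<\infty$ for every $\delta>0$, this piece is finished by the Carleson property of $\nu$ (each $s'$ falls in $[2t/\kappa,t]$ only for $t$ in boundedly many dyadic intervals) together with Lemmas~\ref{lem:63} and~\ref{lem:2bis}. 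On $t<s\le t'$ the kernel is at most $(t/s)^\eta$, and H\"older's inequality plus the Carleson property suffice.

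So the lemma holds with no restriction on the exponents. Nothing downstream is affected, since Theorem~\ref{teo:15}(5) uses $\nu=\mu_1\in L^\infty(\mu)\cdot\mu$.
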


\begin{proof}
	We may assume that $\beta=\beta_L$. Set $c'\coloneqq \max(a,\max \supp \nu)$. 
	Observe  that, by Lemmas~\ref{lem:25} and~\ref{lem:63}, there is a constant $C_1>1$ such that, for every $x'\in G$ and  for every $t'\in (0,c']$,  
	\[
	\begin{split}
		&\norm*{ \chi_{(0,t'] \times B(x',t'^{1/d})}(t,x)  \int_0^{+\infty}  s^\delta t^\eta    f(s+t,x)\,\dd \mi(s)}_{L^q_{(t,x)}(\beta \otimes\nu)}\\
		&\qquad \meg \norm*{ \chi_{(0,t'] \times B(x',t'^{1/d})}(t,x)  \int_{t'}^{+\infty}  s^\delta t^\eta  f(s+t,x)\,\dd \mi(s)}_{L^q_{(t,x)}(\beta \otimes\nu)}\\
		&\qquad \qquad+ C_1 \norm*{ \chi_{(0,2t'] \times B(x',t'^{1/d})}(t,x)  t^{\delta+\eta}  f (t,x) }_{L^q_{(t,x)}(\mi \otimes \beta)}\\
		&\qquad \meg \norm*{ \chi_{ (0,t']}( t)\int_{t'}^{c'} s^\delta t^\eta \norm{\chi_{B(x',t'^{1/d})}  f(s+t,\,\cdot\,)  }_{L^q(\beta)}\,\dd \mi(s)}_{L^q_{t}( \nu)}\\
		&\qquad \qquad+ C_1^2 t'^{Q_*/(dq)} \norm*{  t^{\delta+\eta} f (t,x) }_{\Cc^q_{(t,x)}(\mi)} .
	\end{split}
	\]
	As in the proof of Lemma~\ref{lem:25b} we see that there is a constant $C_2>0$ such that $\norm*{  t^{\delta+\eta} f (t,x) }_{\Cc^q_{(t,x)}(\mi)}\meg C_2 \norm*{  t^{\delta+\eta} g(t,x) }_{\Cc^q_{(t,x)}(\mi)}$ and such that
	\[
	(s+t)^{\delta+\eta}\norm{\chi_{B(x',t'^{1/d})} f(s+t,\,\cdot\,)  }_{L^q(\beta)}\meg C_2 t'^{Q_*/(dq)} \norm{ s'^{\delta+\eta} f (s',x)}_{\Cc^q_{(s',x)}(\mi)}
	\]
	for every $t'\in (0,c']$, for every $x'\in G$,  for every $t\in (0,t']$, and for $\mi$-almost every   $s\in [t',c']$, provided that $\kappa$ is sufficiently large. It then suffices to prove that
	\[
	\sup_{t'\in (0,c']} \norm*{ \chi_{ (0,t']}( t)\int_{t'}^{c'} \frac{s^\delta t^\eta}{(s+t)^{\delta+\eta}} \,\dd \mi(s)}_{L^q_{t}( \nu)}
	\]
	is finite. This may be done arguing as in the proof of Lemma~\ref{lem:25b}.
\end{proof}

\begin{lem}\label{lem:8b}
	Take $\alpha\in \R$, $q\in [1,\infty]$, $\lambda \Meg 0$, $\eps \in (0,1]$,   $t_0>0$, $t_1\Meg 0$, $\mi\in \Mc_\Car$, and $\nu\in \Mc_\RC$.  Take $m  \in\N$  and $m'\in (\alpha/\dd_{\Lc'},+\infty) $.
	
	Let $\Lc'$ be a weighted subcoercive operator on $G$ with degree $\grado'$, and define $W'^{(m'),*}_{t,\eps}$ as in Definition~\ref{def:2} replacing $\Lc$ with $\Lc'$. Then, there is a constant $C>0$ such that, for every $X\in U_\lambda$, and for every $f\in \Sc'(G)$
		\[
			\begin{split}
				&\norm{\ee^{-t_0\Lc'}X f}_{L^\infty(\beta)}+ \norm{t^{-\alpha/\dd_{\Lc'} }(W'^{(m'),*}_{t,\eps} X f)(x)}_{\Cc^q_{(t,x)}(\mi,1/\grado')} \\
				&\qquad\meg C\abs{X} \norm{\ee^{-t_1\Lc}f}_{L^\infty(\beta)}+C\abs{X} \norm{t^{-\alpha/\grado }(W^{(m)}_{t} f)(x)}_{\Cc^q_{(t,x)}(\nu,1/\grado)}.
			\end{split}
		\]
\end{lem}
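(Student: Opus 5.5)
Lemma~\ref{lem:8b} is the $\Cc^q$ analogue of \cite[Proposition 6.9]{BCP} (more precisely, of the lemma there comparing norms defined by different operators and maximal functions). I would follow the same scheme, replacing every use of Lemmas~\ref{lem:25} and~\ref{lem:25bis} by Lemmas~\ref{lem:25b} and~\ref{lem:25bisb}, and every use of \cite[Lemma 5.11]{BCP} by Lemma~\ref{lem:2bis}. By Lemma~\ref{lem:26bis} I may first replace $\nu$ by a convenient sampling measure, say $\mi_{t_2}$ for a small $t_2>0$, and simultaneously shift the time parameter, replacing $W^{(m)}_t f$ by $W^{(m)}_{\eps' t}f$ up to Gaussian averages. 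Then Lemma~\ref{lem:27bis} and \cite[Lemma 6.8]{BCP} allow $m$ to be increased freely, so I may assume $m>\alpha/\grado+\lambda/\grado+$ (the degrees appearing in $W'^{(m'),*}$), and $t_1$ and $t_0$ may be assumed small and equal after composing with heat semigroups.

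The core step is the reproducing formula. I write
\[
f=\frac{1}{(m-1)!}\int_0^{t_2}W^{(m)}_s f\,\frac{\dd s}{s}+\sum_{k<m}\frac{1}{k!}(t_2\Lc)^k\ee^{-t_2\Lc}f ,
\]
which holds in $\Sc'(G)$ by \cite{BCP}. I then apply $X\ee^{-r\Lc'}Y$, with $r\in[\eps t,t/\eps]$, $\deg X+\deg Y\meg m'\grado'$ (together with the given $X\in U_\lambda$), to each piece.

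For the integral term I estimate the kernels of $X\ee^{-r\Lc'}Y X_0 W^{(m)}_s$ using Theorem~\ref{teo:7} and Lemma~\ref{lem:32}. When $s\meg r$, I move $\Lc^{m}$ onto the $\Lc'$ side by expressing $\Lc^m$ through $U_{m\grado}$ and commuting, as in \cite{BCP}, via $X^{\dag R\dag}$. When $s\Meg r$, I move $X,Y$ onto $\ee^{-s\Lc}$. This gives a pointwise bound
\[
t^{-\alpha/\grado'}\abs{\cdots}\lesssim \frac{s^{\delta}t^{\eta}}{(s+t)^{\delta+\eta}}\,T_{b,c(s+t)}\big(s^{-\alpha/\grado}W^{(m)}_{\eps's}f\big)
\]
with $\delta,\eta>0$, plus harmless factors $\ee^{C(s+t)}$ since all times are bounded. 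The function $g(s,\cdot)=s^{-\alpha/\grado}\abs{W^{(m)}_{\eps's}f}$ then satisfies the hypotheses of Lemma~\ref{lem:25b}: the comparison $T_{b,ct}g(s)$ for $s\in[t/\kappa,t]$ follows from the semigroup property $W_{t}=c\,(\ldots)\ee^{-(t-s)\Lc}W_s$ and Gaussian bounds. Lemma~\ref{lem:25b} then bounds the $\Cc^q(\mi,1/\grado')$ norm. The mismatch between the exponents $a=1/\grado'$ and $1/\grado$ is handled through Lemmas~\ref{lem:61bis} and~\ref{lem:2bis}, because Gaussians at scale $t$ for $\Lc'$ are dominated by Gaussians for $\grado$ at scale comparable to $t^{\grado/\grado'}$ (first point of Lemma~\ref{lem:18}).

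The finite sum and the term $\norm{\ee^{-t_0\Lc'}Xf}_{L^\infty}$ are controlled by $\norm{\ee^{-t_1\Lc}f}_{L^\infty}$ and by $\Lc^k\ee^{-t_2\Lc}f$, exactly as in \cite{BCP}. For these pieces I use Young's inequality together with the $L^\infty$ bound on $W^{(m)}_t f$ that Lemma~\ref{lem:27bis}'s proof extracts from the $\Cc^q$ norm.

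The main obstacle is verifying the pointwise domination hypotheses of Lemma~\ref{lem:25b} (and, for large $s$, of Lemma~\ref{lem:25bisb}) with the correct Gaussian constants while passing between $\grado$ and $\grado'$ scalings. The maximal function over $r\in[\eps t,t/\eps]$ must also be absorbed. For that I would first try to bound $\max_r$ by an average of Gaussians at a slightly larger scale, using the second point of Lemma~\ref{lem:18}.
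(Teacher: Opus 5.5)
Your outline agrees with the paper on the $\Cc^q$ part. The paper's proof is a rerun of \cite[Proposition 6.9]{BCP} with \cite[Lemmas 5.11, 6.7, 6.8, and 6.4]{BCP} replaced by Lemmas~\ref{lem:2bis}, \ref{lem:26bis}, \ref{lem:27bis}, and~\ref{lem:25b}.

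\textbf{Where you differ.} The genuine difference is the term $\norm{\ee^{-t_0\Lc'}Xf}_{L^\infty(\beta)}$. The paper singles this out as the only real new difficulty, because transcribing \cite{BCP} verbatim only controls $\sup_{x}\norm{\chi_{B(x,t_0^{1/\grado'})}\ee^{-t_0\Lc'}Xf}_{L^q(\beta)}$.
\begin{itemize}
\item The paper repairs this at the output. It runs the argument for $T_{b,t_0/2,\grado'}\ee^{-(t_0/2)\Lc'}Xf$. It then uses Lemma~\ref{lem:32} to dominate the supremum of $T_{2b,t_0/2,\grado'}\ee^{-(t_0/2)\Lc'}Xf$ by local $L^q$ averages of $T_{b,t_0/2,\grado'}\ee^{-(t_0/2)\Lc'}Xf$. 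It concludes with $\abs{\ee^{-(t_0/2)\Lc'}\delta_e}\lesssim p_{2b,t_0/2,\grado'}$.
\item You repair it at the input. The Harnack argument in the proof of Lemma~\ref{lem:27bis}, together with Lemmas~\ref{lem:26bis}, \ref{lem:61}, \ref{lem:2bis} and the semigroup property for larger times, shows that $\norm{t^{-\alpha/\grado}(W^{(m)}_t f)(x)}_{\Cc^q_{(t,x)}(\nu,1/\grado)}$ controls $\sup_{0<t\meg T}t^{-\alpha/\grado}\norm{W^{(m)}_t f}_{L^\infty(\beta)}$. You then use the reproducing formula with $M\Meg m$ powers of $\Lc$, where $M-m+\alpha/\grado>0$, moving $\Lc^{M-m}$ onto the kernel of $\ee^{-t_0\Lc'}X$. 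Young's inequality then bounds $\norm{\ee^{-t_0\Lc'}Xf}_{L^\infty(\beta)}$; equivalently, invoke the case $p=q=\infty$ of \cite[Proposition 6.9]{BCP}.
\end{itemize}
Your route is correct and more modular. It reduces the delicate term to the $B^{\infty,\infty}_\alpha$ theory through the estimate behind $F^{\infty,q}_\alpha\subseteq B^{\infty,\infty}_\alpha$, which the paper records only afterwards, in Proposition~\ref{prop:1b}. The paper's route avoids that intermediate statement and stays within a single pass through the proof of \cite{BCP}. You should, however, present this step as the crux rather than as `exactly as in \cite{BCP}'.

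\textbf{Two smaller corrections.}

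First, your bound $\frac{s^\delta t^\eta}{(s+t)^{\delta+\eta}}T_{b,c(s+t)}(\cdots)$ does not literally fit Lemma~\ref{lem:25b}. There the integrand must depend on $(s,x)$ alone, and for $s\ll t$ a Gaussian at scale $t$ is not pointwise dominated by one at scale $s$. Split at $s=t$:
\begin{itemize}
\item for $s\Meg t$, use $T_{b,c(s+t)}\lesssim T_{b,2cs}$ and apply Lemma~\ref{lem:25b};
\item for $s\meg t$, use $T_{b,c(s+t)}\lesssim T_{b,2ct}$, pull this positive operator out of the $s$-integral, and remove it with Lemma~\ref{lem:2bis} (the scale $2ct$ is admissible there). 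Then finish with Lemma~\ref{lem:25}, applied pointwise in $x$ on each box.
\end{itemize}
This is what substituting Lemma~\ref{lem:2bis} for \cite[Lemma 5.11]{BCP} actually requires, since boundedness of $T^{\omega}_{b,*}$ on $\Cc^q$ is not available.

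Second, enlarging $m$ on the right-hand side is the easy direction (semigroup property, Theorem~\ref{teo:7}, Lemma~\ref{lem:26bis}) and does not involve Lemma~\ref{lem:27bis}. Likewise, you may take $t_0$ small and $t_1$ large for free, but not $t_1$ small. Passing from time $t_1$ to $t_2<t_1$ goes through $\int_{t_2}^{t_1}W^{(m)}_s f\,\frac{\dd s}{s}$ and the $L^\infty$ bound above. Alternatively, simply use the reproducing formula at time $t_1$.
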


Notice that we may have as well allowed $t_0\Meg 0$ if $\alpha>0$, but this would have required another modification in the proof. We consequently preferred to get this improvement as a consequence of Proposition~\ref{prop:2b}.

\begin{proof}
The proof is essentially a repetition (with inessential modifications) of the proof of~\cite[Proposition 6.9]{BCP}, with the replacement of~\cite[Lemmas 5.11, 6.7, 6.8, and 6.4]{BCP}  with Lemmas~\ref{lem:2bis},~\ref{lem:26bis},~\ref{lem:27bis}, and~\ref{lem:25b}, respectively. The only relevant difference arises when trying to estimate $\norm{\ee^{- t_0 \Lc'} X f}_{L^\infty(\beta)}$, since in this case the trick used in the proof of~\cite[Proposition 6.9]{BCP}   leads to an estimate of the form
\[
\sup_{x\in G} \norm{\chi_{B(x,t_0^{1/\grado'})} \ee^{-t_0\Lc'} X f }_{L^q(\beta)}\meg C_1  \abs{X} \norm{\ee^{-t_1\Lc}f}_{L^\infty(\beta)}+C_1\abs{X} \norm{t^{-\alpha/\grado }(W^{(m)}_{t} f)(x)}_{\Cc^q_{(t,x)}(\nu,1/\grado)},
\]
which is too weak for our purposes. However, a careful inspection of the proof shows that we may in fact get to an estimate of the form
\[
\sup_{x\in G} \norm{\chi_{B(x,t_0^{1/\grado'})} T_{b,t_0/2,\grado'}\ee^{-(t_0/2)\Lc'} X f }_{L^q(\beta)}\meg C_2 \abs{X} \norm{\ee^{-t_1\Lc}f}_{L^\infty(\beta)}+C_2\abs{X} \norm{t^{-\alpha/\grado }(W^{(m)}_{t} f)(x)}_{\Cc^q_{(t,x)}(\nu,1/\grado)}
\]
for suitable $b,C_2>0$. By means of Lemma~\ref{lem:32} one may then observe that there is a constant $C_3>0$ such that
\[
\norm{T_{2b,t_0/2,\grado'}\ee^{-(t_0/2)\Lc'} X f }_{L^\infty(\beta)}\meg C_3\sup_{x\in G} \norm{\chi_{B(x,t_0^{1/\grado'})} T_{b,t_0/2,\grado'}\ee^{-(t_0/2)\Lc'} X f }_{L^q(\beta)}.
\]
The assertion then follows by Theorem~\ref{teo:7}, which ensures that $b$ may be chosen so small that $\abs{\ee^{(t_0/2)\Lc'}\delta_e}\meg C_4 p_{2b,t_0/2,\grado'}$ for a suitable constant $C_4>0$.
\end{proof}

\section{The Spaces $F^{\infty,q}_\alpha(\beta)$}\label{sec:5}

We may now finally state the definition of the spaces $F^{\infty,q}_\alpha(\beta)$. 

\begin{deff}
	Take $q\in [1,\infty]$, $\alpha\in \R$, $c>0$, $\mi \in \Mc_\Samp$, $m\in \N$ with $m>\alpha/\grado$, and $t_0>0$.   Define $F^{\infty,q}_\alpha(\beta)$ as the space of $f\in \Sc'(G)$ such that
	\[
	\norm{\ee^{-t_0\Lc}f}_{L^\infty(\beta)}+\norm{ t^{-\alpha/\grado } (W^{(m)}_{t} f)(x) }_{\Cc^q_{(t,x)}(\mi,1/\grado)}
	\]
	is finite, endowed with the corresponding norm. We define $\mathring F^{\infty,q}_\alpha(\beta)$ as the the closure of $C^\infty_c(G)$ in $F^{\infty,q}_\alpha(\beta)$. 
\end{deff}

Notice that the definition of the spaces $F^{\infty,q}_\alpha(\beta)$ and $\mathring F^{\infty,q}_\alpha(\beta)$ does not depend on  $\beta$, $\mi$, $m$, $t_0$, or $\Lc$, as a consequence of Lemma~\ref{lem:8b} (and the definition of $\Cc^q(\mi,a)$). We write  $F^{\infty,q}_\alpha(\beta)$ instead of $F^{\infty,q}_\alpha$ or $F^{\infty,q}_\alpha(G)$ only for notational consistency.  

We collect in the following result the analogues of~\cite[Proposition 7.6, Lemmas 7.5 and 7.7, and Theorems 7.8 and 7.9]{BCP}, which may be essentially proved in the same way using the above substitutions. We shall only highlight the relevant differences.

\begin{teo}\label{teo:15}
	Take $q\in [1,\infty]$ and $\alpha\in \R$. Then, the following hold:
	\begin{enumerate}
		\item[\textnormal{(1)}] $F^{\infty,q}_\alpha(\beta)$ and $\mathring F^{\infty,q}_\alpha(\beta)$ are Banach spaces;
		
		\item[\textnormal{(2)}] given $m\in \N$ with $m>\alpha/\grado$ and $\mi\in \Mc_\Samp$, if we define $\Fc_{m }$   as the space of $(f_t)\in \Sc'(G)^{[0,+\infty)}$ such that $t^{m }\Lc^m f_0=\ee^{-(1-t )\Lc} f_t$ for $t\in (0,1]$ while $f_t=t^{m } \Lc^m \ee^{(t-1)\Lc}f_0$ for $t\in [1,\infty)$, and set  $\theta_\alpha\colon (0,+\infty) \times G\ni (t,x)\mapsto t^{\alpha/\grado}\in (0,+\infty)$, then the mapping 
		\[
		T_{m }\colon \Sc'(G)\to \Fc_m
		\]
		such that $(T_{m })_0 f\coloneqq \ee^{-\Lc}f$ and $(T_{m } f)_t\coloneqq W^{(m)}_{t }f$ for $t>0$ and for every $f\in \Sc'(G)$,
		 induces an isomorphism of $F^{p,q}_\alpha(\beta)$ onto $[L^\infty(\beta)\oplus(\theta_\alpha \Cc^q(\mi,1/\grado))]\cap \Fc_{m}$;\footnote{Here, by abuse of notation, we write $L^\infty(\beta)\oplus(\theta_\alpha \Cc^q(\mi,1/\grado))$ to denote the space of $(f_t)_{t\Meg 0}$ such that $f_0\in L^\infty(\beta)$ and the mapping $(t,x)\mapsto f_t(x)$ belongs to $\theta_\alpha \Cc^q(\mi,1/\grado)$ or, equivalently, such that $f_0\in L^\infty(\beta)$ and the mapping $(t,x)\mapsto t^{-\alpha/\grado} f_t(x)$ belongs to $\Cc^q(\mi,1/\grado)$.}
		 
		 \item[\textnormal{(3)}] with the notation of \textnormal{(2)}, if we assume that $\mi=\sum_{j\in \N} \delta_{\eps^j}$ for some $\eps\in (0,1)$,
		 take $m>\abs{\alpha}/\grado$, and  define
		 \[
		 P_{m } f\coloneqq \sum_{h=0}^{2m-1} \frac{2^h}{h!} \Lc^h \ee^{-\Lc}f_0+\frac{1}{(2m-1)!} \sum_{j=1}^\infty \eps^{-j m} \int_{2 \eps^j}^{2 \eps^{j-1}} t^{2 m }\Lc^m \ee^{-(t-\eps^j)\Lc} f_{\eps^j} \frac{\dd t}{t}
		 \]
		 for every $f=(f_t)\in \Sc'(G)^{[0,+\infty)}$ for which the sum is defined, then $T_m(\Sc'(G))\subseteq \mathrm{dom} P_{m }$, $P_{m }T_{m } f=f$ for every $f\in \Sc'(G)$, and $\Pc_{m }\coloneqq T_{m } P_{m }$ is a projector of $\mathrm{dom}(P_{m })$ onto $\Fc_{m }$ which induces a continuous linear projector of $L^\infty(\beta)\oplus(\theta_\alpha \Cc^q(\mi,1/\grado))$ onto $[L^\infty(\beta)\oplus(\theta_\alpha \Cc^q(\mi,1/\grado))]\cap \Fc_{m }$;
		 
		 \item[\textnormal{(4)}] if we take $t_0\Meg 0$, $h\in\N$, and a minimal system $(X_j)_{j\in J}$ in $\gf$, with $d_j\coloneqq \deg(X_j)$ for every $j\in J$ and $t_0>0$ if $\alpha \meg 0$, then $f\in F^{\infty,q}_{\alpha+h\dd}(\beta)$ if and only if $\ee^{-t_0 \Lc} f\in L^\infty(\beta)$ and $X_j^{h \dd/d_j} f \in F^{\infty,q}_\alpha(\beta)$ for every $j\in J$;
		 
		 \item[\textnormal{(5)}] there is $\omega_0\in \R$ such that, for every $\alpha'\in \R$ and for every $\omega \Meg \omega_0$, the $(1+\abs{\gamma})^{-1/2} \ee^{-\pi\abs{\gamma}/2} \Lc_\omega^{(\alpha-\alpha')/\grado+i\gamma}$, $\gamma\in \R$, induce equicontinuous canonical isomorphisms of $F^{\infty,q}_\alpha(\beta)$ and $\mathring F^{\infty,q}_\alpha(\beta)$ onto $F^{\infty,q}_{\alpha'}(\beta)$ and $\mathring F^{\infty,q}_{\alpha'}(\beta)$, respectively.
	\end{enumerate}
\end{teo}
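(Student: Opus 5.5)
The plan is to follow the proofs of \cite[Proposition 7.6, Lemmas 7.5 and 7.7, Theorems 7.8 and 7.9]{BCP} item by item. Throughout, I replace \cite[Lemmas 5.11, 6.7, 6.8, 6.4, Proposition 6.9]{BCP} by Lemmas~\ref{lem:2bis}, \ref{lem:26bis}, \ref{lem:27bis}, \ref{lem:25b} (and \ref{lem:25bisb}), and \ref{lem:8b}, respectively.

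\emph{Step (2).} This comes first, since (1) follows from it. The inclusion $T_m(F^{\infty,q}_\alpha(\beta))\subseteq [L^\infty(\beta)\oplus(\theta_\alpha \Cc^q(\mi,1/\grado))]\cap\Fc_m$ is the definition of the norm together with Lemma~\ref{lem:8b}, used to pass from $t_0$ to $1$ and between sampling measures. Conversely, take $(f_t)$ in the right-hand side. The semigroup relation lets me apply Lemma~\ref{lem:27bis} to $(\Lc^{-m}$-type$)$ data: since $\ee^{-(s)\Lc}f_t$ is compatible with $f_{s+t}$, it produces a unique $f\in\Sc'(G)$ with $\ee^{-\Lc}f=f_0$ and $W^{(m)}_t f=f_t$. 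The norm estimate is immediate.

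\emph{Step (1).} $F^{\infty,q}_\alpha(\beta)$ is isomorphic to a subspace of the Banach space $L^\infty(\beta)\oplus\theta_\alpha\Cc^q(\mi,1/\grado)$. To see that the image is closed, take a Cauchy sequence. Its limit satisfies the relations defining $\Fc_m$: convergence in $\Cc^q$ implies local $L^q$ convergence, and $\ee^{-s\Lc}$ is continuous on $L^q_\loc$ with Gaussian-weighted kernels. Here I use Lemma~\ref{lem:2bis} to control the tails. Completeness of $\Cc^q(\mi,a)$ itself is a routine Fatou argument. Then $\mathring F^{\infty,q}_\alpha(\beta)$ is a closed subspace, hence Banach.

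\emph{Step (3).} I take the reproducing formula from \cite[Theorem 7.8]{BCP}. The identity $P_mT_mf=f$ is algebraic and holds in $\Sc'(G)$ exactly as there. The substance is continuity of $T_mP_m$ on $L^\infty\oplus\theta_\alpha\Cc^q$. Write $W^{(m)}_s$ applied to the $j$-th term as $s^m t^{2m}\Lc^{2m}\ee^{-(s+t-\eps^j)\Lc}f_{\eps^j}$. By Theorem~\ref{teo:7}, its absolute value is dominated by
\[
\frac{s^m t^{2m}}{(s+t)^{2m}}\,T_{b,c(s+t)}\abs{f_{\eps^j}} .
\]
This is exactly the shape handled by Lemma~\ref{lem:25b} (or Lemma~\ref{lem:25bisb}), with exponents $\delta,\eta$ positive because $m>\abs{\alpha}/\grado$. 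The low-frequency term is controlled in $L^\infty$ by Young's inequality. I expect \textbf{this to be the main obstacle}: one must verify the pointwise hypotheses of Lemma~\ref{lem:25b}, namely domination by $T_{b,ct,d}g(s,\cdot)$ for $s\in[t/\kappa,t]$, with a discrete $\mi$ and the shifted times $t-\eps^j$. I would first try to use Lemma~\ref{lem:32} and the semigroup property to move the Gaussian to comparable scales, shrinking $b$ as needed.

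\emph{Steps (4) and (5).} For (4), I repeat the proof of \cite[Theorem 7.9]{BCP}. The direct implication is Lemma~\ref{lem:8b} with $X=X_j^{h\dd/d_j}\in U_{h\dd}$ and Proposition~\ref{prop:9}. For the converse, I write $\Lc^{h}$ (for a suitable $\Lc$ built from the $X_j$, as in Proposition~\ref{prop:4}) as a combination of $X\,X_j^{h\dd/d_j}$, and again estimate with Lemma~\ref{lem:8b}. The $L^\infty$ part for $t_0=0$ when $\alpha>0$ is handled as in \cite{BCP}. For (5), by (2) it suffices to study $W^{(m)}_t\Lc_\omega^{z}$. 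I write $\Lc_\omega^{z}$ as a Mellin integral of $\ee^{-s\Lc_\omega}$, which yields the kernel bound above with the factor $(1+\abs\gamma)^{1/2}\ee^{\pi\abs\gamma/2}$. I then apply Lemmas~\ref{lem:25b} and~\ref{lem:25bisb} for the two ranges $s<t$ and $s>t$. Finally, density of $C^\infty_c$ is preserved because $\Lc_\omega^{z}$ maps test functions into $\Sc(G)$, which sits in $\mathring F^{\infty,q}$ by the same approximation as in \cite{BCP}.
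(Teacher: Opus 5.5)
You have the location of the difficulties right, and your outline of (1), (2) and (4) broadly matches the paper's transplant of the \cite{BCP} proofs. For $t_0=0$ in (4), the input you need is Proposition~\ref{prop:2b}(3), which is proved without using (4). However, at the two points where the paper needs genuinely new arguments, your plan either proposes a fix that cannot work or omits the issue.

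In (3), Lemma~\ref{lem:32} together with the semigroup property cannot remove the obstacle you flag. $\Pc_m$ must be bounded on all of $L^\infty(\beta)\oplus(\theta_\alpha\Cc^q(\mi,1/\grado))$, whose elements are arbitrary families. There is no relation between $f_{\eps^j}$ and $f_{\eps^{j+1}}$, so the off-diagonal hypothesis of Lemma~\ref{lem:25b} (domination of $f(t,\cdot)$ by Gaussian averages of $g(s,\cdot)$ for $t/\kappa\meg s<t$) fails in general; take $f_{\eps^{j+1}}=0$. The missing idea is exactly where the discreteness of $\mi=\sum_j\delta_{\eps^j}$ enters, and your plan never uses it. The only requirement on $\kappa$ in the proof of Lemma~\ref{lem:25b} is a lower bound for $\mi([s/\kappa,s])$, which holds automatically at atoms, so one may take $\kappa<\eps^{-1}$. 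The $(\mi\otimes\mi)$-a.e.\ condition then involves only the diagonal pairs $(\eps^j,\eps^j)$, where it holds trivially because $f(\eps^j,\cdot)$ is a Gaussian average of $g(\eps^j,\cdot)=\eps^{-j\alpha/\grado}\abs{f_{\eps^j}}$. This is why (3) is restricted to dyadic $\mi$.

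You also skip the second new point in (3), the $L^\infty$ component. Young's inequality only handles the $f_0$-part of $\ee^{-\Lc}P_mf$. The contribution of the $f_{\eps^j}$ is bounded only by $\norm{\norm{s^{-\alpha/\grado}(T_{b,3}f_s)(x)}_{L^q_s(\mi)}}_{L^\infty_x(\beta)}$, which the $\Cc^q$ norm does not control directly. The fix is as follows: use Lemma~\ref{lem:32} to bound $(T_{b,3}f_s)(x)\lesssim(T_{b/2,3}f_s)(y)$ for $y\in B(x,1)$. Averaging over $B(x,1)$ then gives the top-scale $\Cc^q(\mi,1/\grado)$ quantity, which Lemma~\ref{lem:2bis} controls.

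In (5), you cannot apply Lemma~\ref{lem:25bisb} to the whole Mellin integral. That lemma requires $f(\tau,\cdot)=0$ for $\tau>a$, because $\Cc^q$ only sees bounded scales, whereas $\tau\mapsto\abs{\ee^{-\tau\Lc}\Lc^mf}$ does not vanish for large $\tau$. Splitting into $s<t$ and $s>t$ does not address this. The paper instead truncates at $s=1$:
\begin{itemize}
\item Lemma~\ref{lem:25bisb} handles $\int_0^1$.
\item For $\int_1^\infty$, one estimates the stronger $L^{q,\infty}_{t,x}(\mi_1,\beta)$ norm. By \cite[Lemma 6.6]{BCP} this reduces to $\norm{\chi_{[1,\infty)}(t)t^{m-\alpha/\grado}(\ee^{-t\Lc}\Lc^mf)(x)}_{L^{q,\infty}_{t,x}}$ for a Haar measure in $t$.
\item This is dominated using $\ee^{\omega_1(t-1/2)}T^{\omega_1}_{b,*}\ee^{-(1/2)\Lc}f$ with $\omega_1<0$; this is where the largeness of $\omega$ is used.
\item Finally, $T^{\omega_1}_{b,*}$ is bounded on $L^\infty(\beta)$.
\end{itemize}

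Separately, your density argument for $\mathring F^{\infty,q}_\alpha(\beta)$ rests on a false claim. $\Lc_\omega^{z}$ does not map $C^\infty_c(G)$ into $\Sc(G)$ when $\Re z<0$. Already for $G=\R^n$ and $\Lc=-\Delta$, $(\Lc+\omega)^{-1}\phi$ decays in general only like $\ee^{-\sqrt{\omega}\abs{x}}$, whereas $\Sc(G)$ requires every exponential weight. That part therefore needs a different approximation argument.
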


Notice that the proof of (3) fails when $\mi$ is not (essentially) a `dyadic' discrete measure.

\begin{proof}
	(1), (2), and (4) may be proved as the corresponding~\cite[Proposition 7.6, Lemma 7.5, and Theorem 7.8]{BCP}  with the previous substitutions. Notice, though, that in order to consider the case $\alpha>0$ and $t_0=0$ in (4), one has to use (3) in Proposition~\ref{prop:2b}. The reader may verify that the proof of this latter result does not rely on (4).
	
	(3) One may proceed as in the proof of the corresponding~\cite[Lemma 7.7]{BCP}  (with $\kappa=2$), with a modification (and a remark). The   modification  arises when trying to estimate $\norm{\Pc_{m,2}f}_{L^\infty(\beta)}$, where $\Pc_{m,2}=\Pc_m -\sum_{h=0}^{2m-1} \frac{2^h}{h!} \Lc^h \ee^{-2\Lc}$. In fact, in this case one would only get an estimate of the form (for suitable $b,C_1>0$)
	\[
	\begin{split}
		 \norm{\Pc_{m,2}f}_{L^\infty(\beta)}\meg C_1  \norm*{\norm{ s^{-\alpha/\grado}  ( T_{b,3} f_{s})(x)}_{L^q_s(\mi)}  }_{L^\infty_x(\beta)}  ,
	\end{split}
	\]
	which does not lead directly to the desired continuity. In order to conclude, one has to   observe that, by Lemma~\ref{lem:32}, there is a constant $C_2>0$ such that
	\[
	 (T_{b,3} f_{s})(x)\meg C_2 (T_{b/2,3}f_s)(y)
	\]
	for every $y\in B(x,1)$, so that there is a constant $C_3>0$ such that
	\[
	\begin{split}
	\norm*{\norm{ s^{-\alpha/\grado}   (T_{b,3} f_{s})(x)}_{L^q_s(\mi)}  }_{L^\infty_x(\beta)}&\meg C_3\norm*{\norm{ s^{-\alpha/\grado} \chi_{B(x,1)}(y)  (T_{b/2,3} f_{s})(y)}_{L^q_{(s,y)}(\mi\otimes\beta_L)}  }_{L^\infty_x(\beta)}\\
		&\meg C_3 \norm{s^{-\alpha/\grado} (T_{b/2,3} f_s)(y)}_{\Cc^q_{(s,y)}(\mi,1/\grado)},
	\end{split}
	\]
	from which one may conclude by means of Lemma~\ref{lem:2bis}. The remark (which explains the choice of $\mi$ discrete) occurs while estimating $\norm{[\Pc_{m,2 f}]_s(y)}_{\Cc^q_{(s,y)}(\mi,1/\grado)}$. In fact, in this case one would  get an estimate of the form (for suitable $b',C_4>0$)
	\[
	\begin{split}
		&\norm{ [\Pc_{m,2 }f]_s(y)}_{\Cc^q_{(s,y)}(\mi,1/\grado)}\meg C_4 \norm*{  s^{m-\alpha/\grado}  \sum_{ j=1 }^\infty  \frac{\eps^{m j}}{(\eps^j+s)^{2m}} T_{b',2 \eps^{j-1}} f_{\eps^j}  }_{\Cc^q_{(s,y)}(\mi,1/\grado)}.
	\end{split}
	\]
	In order to apply Lemma~\ref{lem:25b} we exploit the precise choice of $\nu$ to set $\kappa<\eps^{-1}$, so that the condition $f(t,\,\cdot\,)\meg c T_{b,ct}[g(s,\,\cdot\,)]$ for $(\mi\otimes \mi)$-almost every $(t,s)$ with $t/\kappa\meg s \meg t$ ($j\in\N$) reduces to the condition $f(\eps^j,\,\cdot\,)\meg c T_{b,ct}[g(\eps^j,\,\cdot\,)]$ for every $j\in\N$, which is readily met with $f(\eps^j,x)=g(\eps^j,x)=f_{\eps^j}(x)$ for every $j\in\N$ and $x\in G$, as well as suitable choices of $b,c>0$.  
	
	(5) This may be proved arguing as in the proof of~\cite[Theorem 7.9]{BCP}, except for the fact that we cannot apply Lemma~\ref{lem:25bisb} instead of~\cite[Lemma 6.6]{BCP} to 
	\[
	\norm*{\int_0^{+\infty} t^{m-\alpha'/\grado} s^{(\alpha'-\alpha)/\grado} \abs{(\ee^{-(t+s)\Lc}\Lc^m f)(x)}\,\frac{\dd s}{s}}_{\Cc^q_{(t,x)}(\mi_1,1/\grado)},
	\]
	but only to (say) 
	\[
	\norm*{\int_0^{1} t^{m-\alpha'/\grado} s^{(\alpha'-\alpha)/\grado} \abs{(\ee^{-(t+s)\Lc}\Lc^m f)(x)}\,\frac{\dd s}{s}}_{\Cc^q_{(t,x)}(\mi_1,1/\grado)}.
	\]
	In order to estimate the remaining term, it suffices to estimate
	\[
	\norm*{\int_1^{+\infty} t^{m-\alpha'/\grado} s^{(\alpha'-\alpha)/\grado} \abs{(\ee^{-(t+s)\Lc}\Lc^m f)(x)}\,\frac{\dd s}{s}}_{L^{q,\infty}_{t,x}(\mi_1,\beta)}.
	\]
	This may be done applying~\cite[Lemma 6.6]{BCP} to reduce to estimating
	\[
	\norm*{ \chi_{[1,\infty)}(t) t^{m-\alpha/\grado}   \abs{(\ee^{-t\Lc}\Lc^m f)(x)} }_{L^{q,\infty}_{t,x}(\mi,\beta)},
	\]
	were $\mi$ denotes a Haar measure on $(0,+\infty)$. This latter term is then controlled by
	\[
	\norm*{ \chi_{[1,\infty)}(t) t^{m-\alpha/\grado} \ee^{\omega_1(t-1/2)} (T^{\omega_1}_{b,*} \ee^{-(1/2)\Lc}f)(x)  }_{L^{q,\infty}_{t,x}(\mi,\beta)}
	\]
	for some $\omega_1<0$ and some $b>0$, with the notation of the proof of~\cite[Theorem 7.9]{BCP}. The assertion then follows from the fact that (we may assume that) $T^{\omega_1}_{b,*}$ is bounded on $L^\infty(\beta)$ (cf.~\cite[Lemma 5.10]{BCP} and its proof).   
\end{proof}

We now pass to the analogues of~\cite[Propositions 8.1 and 8.3]{BCP}. 

\begin{prop}\label{prop:1b}
	Take $q\in[1,\infty]$ and $\alpha\in \R$. Then,  $B^{\infty,q}_\alpha(\beta) \subseteq F^{\infty,q}_\alpha(\beta) \subseteq B^{\infty,\infty}_\alpha(\beta)$ continuously.
\end{prop}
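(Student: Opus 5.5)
Both inclusions concern only the second term of the norm: the term $\norm{\ee^{-t_0\Lc}f}_{L^\infty(\beta)}$ appears in all three norms (taking $t_0=1$). By the independence results (Lemma~\ref{lem:8b} and~\cite[Proposition 6.9]{BCP}), we may fix the same $m>\alpha/\grado$ and the same sampling measure $\mi=\mi_1$ throughout. We may also assume $\beta=\beta_L$, since $\Delta_L$ is bounded above and below on balls of radius at most $1$, so the $\Cc^q$ norms for $\beta$ and $\beta_L$ are comparable. Set $F(t,x)\coloneqq t^{-\alpha/\grado}(W^{(m)}_t f)(x)$.

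\emph{First inclusion, $B^{\infty,q}_\alpha(\beta)\subseteq F^{\infty,q}_\alpha(\beta)$.} Fix $t\in(0,1]$ and $x\in G$. By Fubini,
\[
\norm{t^{-Q_*/(\grado q)}\chi_{(0,t]\times B(x,t^{1/\grado})}F}_{L^q(\mi\otimes\beta)}\meg t^{-Q_*/(\grado q)}\beta(B(e,t^{1/\grado}))^{1/q}\,\norm{\norm{F(s,\,\cdot\,)}_{L^\infty(\beta)}}_{L^q_s(\mi)}.
\]
Since $\beta(B(e,r))\asymp r^{Q_*}$ for $r\meg 1$, the factor in front of the double norm is bounded uniformly in $t$. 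The double norm is exactly $\Bs^{\infty,q}_{\alpha,m,\mi}(f)$, which gives the first inclusion. For $q=\infty$ there is nothing to prove, because both sides are $L^\infty$ norms.

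\emph{Second inclusion, $F^{\infty,q}_\alpha(\beta)\subseteq B^{\infty,\infty}_\alpha(\beta)$.} This is the nontrivial step: we must bound $\sup_{t}\norm{F(t,\,\cdot\,)}_{L^\infty}$ pointwise by local $L^q$ averages, in the spirit of the proof of Lemma~\ref{lem:27bis}.

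1. By Theorem~\ref{teo:7} and the semigroup property, $W^{(m)}_t f=c\,\ee^{-(t/2)\Lc}W^{(m)}_{t/2}f$. Hence there are $C_1,b>0$ with $\abs{W^{(m)}_tf}\meg C_1T_{b,t/2}(W^{(m)}_{t/2}f)$, uniformly for $t\meg 1$. More generally, the same bound holds with $W^{(m)}_{t/2}f$ replaced by $W^{(m)}_sf$ for $s\in[t/4,t/2]$, via $\ee^{-(t-s)\Lc}$ and $(t/s)^m$ factors bounded by constants.

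2. By Lemma~\ref{lem:32}, $T_{b,t/2}g(x)\meg C_2\,T_{b/2,t/2}g(y)$ for all $y\in B(x,t^{1/\grado})$.

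3. Average this inequality in $L^q$ over $(s,y)\in[t/4,t/2]\times B(x,t^{1/\grado})$ with respect to $\mi\otimes\beta$. We use that $\mi([t/4,t/2])$ is bounded below for $t\meg 1$: this holds for $\mi_1$ directly, and for general $\mi\in\Mc_\RC$ after replacing $1/4$ by a smaller $\eps$. We also use $\beta(B(e,t^{1/\grado}))\asymp t^{Q_*/\grado}$. This yields
\[
\abs{F(t,x)}\meg C_3\norm{s^{-\alpha/\grado}(T_{b/2,s'(s)}W^{(m)}_sf)(y)}_{\Cc^q_{(s,y)}(\mi,1/\grado)},
\]
where $s'(s)$ is comparable to $s$.

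4. Lemma~\ref{lem:2bis} bounds the right-hand side by $C\norm{F}_{\Cc^q(\mi,1/\grado)}$. Its hypothesis $\kappa(s)\meg c\,s^{ad}$ holds with $a=1/\grado$ and $d=\grado$, and the factor $\ee^{-\omega\kappa}$ is harmless for $\kappa\meg 1$. Taking the supremum over $t\in(0,1]$ and $x$ gives $\Bs^{\infty,\infty}_{\alpha,m,\mi}(f)\lesssim\norm{f}_{F^{\infty,q}_\alpha(\beta)}$.

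\emph{Expected obstacle.} The delicate point is step 3. The kernel parameters must be matched so that Lemma~\ref{lem:2bis} applies with $\kappa(s)\asymp s$. In particular, the time shift from $s$ to $t$ must be absorbed into a Gaussian at scale comparable to $s$. This is exactly the $t/\kappa^2\meg s\meg t/\kappa$ mechanism of Lemma~\ref{lem:26bis}, which we may invoke to replace $\mi$ by a measure adapted to it.
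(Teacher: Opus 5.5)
Your proposal is correct and is essentially the paper's proof. The first inclusion is the same Fubini/ball-volume estimate, and the second is the argument of Lemma~\ref{lem:27bis}: Gaussian domination by an earlier time, Lemma~\ref{lem:32} to move the point inside $B(x,t^{1/\grado})$, $L^q$-averaging over a Carleson box, and Lemma~\ref{lem:2bis} to remove the kernel. The only difference is cosmetic: you average over the window $[t/4,t/2]$ with the continuous measure $\mi_1$ instead of discretizing $\mi$ via Lemma~\ref{lem:26bis} and rescaling via Lemma~\ref{lem:61}, and your $\kappa(s)$ can simply be $2s$, since $p_{b,t/2}\meg 2^{Q_*/\grado}p_{b,2s}$ for $s\in[t/4,t/2]$.
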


\begin{proof}
	We may assume that $\beta=\beta_L$.
	Take $m\in\N$ with $m>\alpha/\grado$,   and observe that  there is a constant $C_1>0$ such that
	\[
	\begin{split}
		&t^{-Q_*/(q\grado )}\norm{ s^{-\alpha/\grado} \chi_{(0,t]\times B(x,t^{1/\grado})}(s,y) (W^{(m)}_{s} f)(y)  }_{L^q_{(s,y)}(\mi_1\otimes \beta)}\\
			&\qquad\meg t^{-Q_*/(q\grado )}\norm*{ s^{-\alpha/\grado} \norm{\chi_{B(x,t^{1/\grado}) }   W^{(m)}_{s} f }_{L^q(\beta)}  }_{L^q_{s}(\mi_1)}\\
			&\qquad\meg C_1 \norm*{ s^{-\alpha/\grado} \norm{ W^{(m)}_{s} f }_{L^\infty(\beta)}  }_{L^q_{s}(\mi_1)}
	\end{split}
	\]
	for every $f\in \Sc'(G)$, and for every $(t,x)\in (0,1] \times G$. The continuity of the inclusion  $B^{\infty,q}_\alpha(\beta) \subseteq F^{\infty,q}_\alpha(\beta)$ follows. 
	The proof of the reverse inclusion is essentially contained in the proof of Lemma~\ref{lem:27bis}. 
\end{proof}

\begin{prop}\label{prop:2b}
	Take $p,q_1,q_2\in[1,\infty]$ and $\alpha_1,\alpha_2\in \R$. Then, the following hold:
	\begin{enumerate}
		\item[\textnormal{(1)}] if either $\alpha_2<\alpha_1$ or $\alpha_2=\alpha_1$ and $q_1\meg q_2$, then
		\[
		F^{\infty,q_1}_{\alpha_1}(\beta)\subseteq F^{\infty,q_2}_{\alpha_2}(\beta)
		\]
		continuously;
		
		\item[\textnormal{(2)}] if   $p\in (1,\infty)$ and $\alpha_1-\frac{Q_*}{p}\Meg \alpha_2$, then
		\[
		\Delta_L^{1/p}F^{p,q_1}_{\alpha_1}(\beta)\subseteq F^{\infty,q_2}_{\alpha_2}(\beta)
		\]
		continuously;
		
		\item[\textnormal{(3)}] if  $\alpha_1>0$, then 
		\[
		F^{\infty,q_1}_{\alpha_1}(\beta) \subseteq L^\infty(\beta)
		\]
		continuously.
	\end{enumerate}
\end{prop}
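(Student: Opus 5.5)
We prove the three assertions separately, always reducing first to $\beta=\beta_L$ (via Proposition~\ref{prop:5} for (2), and directly from the definition of $\Cc^q(\mi,a)$ for (1) and (3)). The norm used throughout is the one in the definition of $F^{\infty,q}_\alpha(\beta)$, with $\mi=\mi_1$, some fixed $m>\max(\alpha_1,\alpha_2)/\grado$ and $t_0=1$. This is legitimate because Lemma~\ref{lem:8b} shows that the norm does not depend on these choices.

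For (1) we treat the two cases in turn. When $\alpha_2=\alpha_1$ and $q_1\meg q_2$, the first step is the pointwise bound $\abs{W^{(m)}_t f}\meg C T_{b,t/2}(W^{(m)}_{t/2}f)$ from Theorem~\ref{teo:7}. Combined with Lemma~\ref{lem:32} and the local Hölder argument of the proof of Lemma~\ref{lem:27bis}, it gives
\[
t^{-\alpha/\grado}\abs{(W^{(m)}_tf)(x)}\meg C\norm{s^{-\alpha/\grado}(T_{b/2,s/2}W^{(m)}_{s/2}f)(y)}_{\Cc^{q_1}_{(s,y)}(\mi_1,1/\grado)}.
\]
Lemmas~\ref{lem:2bis} and~\ref{lem:26bis} then show that the $L^\infty$ norm of $t^{-\alpha/\grado}W^{(m)}_tf$ is controlled by the $\Cc^{q_1}$ norm; this is essentially Proposition~\ref{prop:1b}. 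Next we interpolate in the local averages. Writing $F$ for the function $(t,x)\mapsto t^{-\alpha/\grado}(W^{(m)}_tf)(x)$, on each box $(0,t]\times B(x,t^{1/\grado})$ we have $\norm{F}_{L^{q_2}}^{q_2}\meg \norm{F}_{L^\infty}^{q_2-q_1}\norm{F}_{L^{q_1}}^{q_1}$. After normalising by $t^{-Q_*/\grado}$ this yields $\norm{F}_{\Cc^{q_2}}\meg C\norm{F}_{\Cc^{q_1}}$. When $\alpha_2<\alpha_1$, we write $t^{-\alpha_2/\grado}=t^{(\alpha_1-\alpha_2)/\grado}t^{-\alpha_1/\grado}$ and split $(0,t]$ dyadically into $(2^{-k-1}t,2^{-k}t]$. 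By the same $L^\infty$ bound, each slice of the $\Cc^{q_2}$ box norm is at most $C 2^{-k(\alpha_1-\alpha_2)/\grado}\norm{f}_{F^{\infty,q_1}_{\alpha_1}}$, and summing the geometric series gives the claim. Both cases thus reduce to the $\Cc^q\to L^\infty$ bound, which we regard as the key step.

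For (2), by (1) it suffices to handle $\alpha_2=\alpha_1-Q_*/p$ with $q_2=\infty$, that is, the embedding into $F^{\infty,\infty}_{\alpha_2}=B^{\infty,\infty}_{\alpha_2}$, where $\Cc^\infty=L^\infty$. This is the known embedding $\Delta_L^{1/p}F^{p,q_1}_{\alpha_1}\subseteq B^{p,\infty}_{\alpha_1}\subseteq \Delta_L^{1/p}B^{\infty,\infty}_{\alpha_1-Q_*/p}$ from \cite{BCP}. Alternatively, one proves it directly using $\norm{W^{(m)}_{t}f}_{L^\infty}\meg C t^{-Q_*/(p\grado)}\ee^{Ct}\norm{W^{(m)}_{t/2}f}_{L^p}$ via Young's inequality and Lemma~\ref{lem:3}. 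The only care needed is in tracking the factors $\Delta_L^{1/p}$.

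For (3), we use the Calder\'on-type reproducing formula $f=\sum_{h<m}\frac1{h!}\Lc^h\ee^{-\Lc}f+\frac1{(m-1)!}\int_0^1 W^{(m)}_tf\,\frac{\dd t}{t}$ (in $\Sc'(G)$), as in \cite{BCP}. The first terms are bounded in $L^\infty$ by $\norm{\ee^{-(1/2)\Lc}f}_{L^\infty}$ via Theorem~\ref{teo:7}. For the integral we use (1), which gives $f\in F^{\infty,\infty}_{\alpha_1}$ and hence $\norm{W^{(m)}_tf}_{L^\infty}\meg Ct^{\alpha_1/\grado}\norm f$. The integral therefore converges absolutely in $L^\infty$ because $\alpha_1>0$.
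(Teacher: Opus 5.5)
Your arguments for (1) and (3) are sound and essentially the paper's. For $\alpha_2<\alpha_1$ the paper uses the chain $F^{\infty,q_1}_{\alpha_1}(\beta)\subseteq B^{\infty,\infty}_{\alpha_1}(\beta)\subseteq B^{\infty,q_2}_{\alpha_2}(\beta)\subseteq F^{\infty,q_2}_{\alpha_2}(\beta)$, which your dyadic slicing unpacks. For $\alpha_1=\alpha_2$ it uses the same interpolation between the $L^\infty$ bound of Proposition~\ref{prop:1b} and the $\Cc^{q_1}$ norm. Part (3) is the reproducing-formula argument applied to $F^{\infty,\infty}_{\alpha_1}(\beta)=B^{\infty,\infty}_{\alpha_1}(\beta)$.

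The gap is in (2). Your reduction ``by (1) it suffices to handle $\alpha_2=\alpha_1-Q_*/p$ with $q_2=\infty$'' runs in the wrong direction. By (1), at fixed smoothness the spaces $F^{\infty,q}_{\alpha_2}(\beta)$ increase with $q$. So $F^{\infty,\infty}_{\alpha_2}(\beta)=B^{\infty,\infty}_{\alpha_2}(\beta)$ is the largest possible target, and the case that implies all others is $q_2=1$.

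From $\Delta_L^{1/p}F^{p,q_1}_{\alpha_1}(\beta)\subseteq B^{\infty,\infty}_{\alpha_1-Q_*/p}(\beta)$ together with (1), you only obtain targets with $\alpha_2<\alpha_1-Q_*/p$, or with $\alpha_2=\alpha_1-Q_*/p$ and $q_2=\infty$. The critical case $\alpha_2=\alpha_1-Q_*/p$ with $q_2<\infty$ remains open. It also cannot be recovered from the information you keep. If all you know is $\norm{W^{(m)}_s f}_{L^\infty(\beta)}\meg C s^{\alpha_2/\grado}$, the $\Cc^{q_2}$ box norm is only bounded by a multiple of $\int_0^t \frac{\dd s}{s}=+\infty$. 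Passing through $L^\infty$ (your Young's inequality step) discards exactly the summability over scales that the endpoint requires.

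The missing step is to estimate the localized $\Cc^1$ norm directly, using H\"older's inequality on the balls instead of sup norms. The paper takes $\beta=\beta_L$, $\mi=\sum_j\delta_{2^{-j}}$ and $\alpha_2=\alpha_1-Q_*/p$, and uses the trivial pointwise bound
\[
\sum_{j\Meg N}2^{j\alpha_2/\grado}\abs{W^{(m)}_{2^{-j}}f}\meg C\,2^{-NQ_*/(p\grado)}\sup_{j\in\N}2^{j\alpha_1/\grado}\abs{W^{(m)}_{2^{-j}}f}.
\]
Averaging over $B(x,2^{-N/\grado})$ and applying H\"older, the factor $\beta_L(B(e,2^{-N/\grado}))^{-1/p}\asymp 2^{NQ_*/(p\grado)}$ cancels exactly. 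Hence the $\Cc^1(\mi,1/\grado)$ norm is dominated by the $F^{p,\infty}_{\alpha_1}(\beta_L)$ norm, which gives $F^{p,\infty}_{\alpha_1}(\beta_L)\subseteq F^{\infty,1}_{\alpha_1-Q_*/p}(\beta)$. The general statement then follows from (1), the monotonicity of $F^{p,q}_{\alpha_1}(\beta_L)$ in $q$, and Proposition~\ref{prop:5}.

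Your detour through $B^{p,\infty}_{\alpha_1}(\beta_L)$ is in fact harmless. Apply H\"older scale by scale, using $\norm{W^{(m)}_{2^{-j}}f}_{L^p(\beta_L)}\meg C2^{-j\alpha_1/\grado}\norm{f}$. This bounds $2^{NQ_*/\grado}\int_{B(x,2^{-N/\grado})}\sum_{j\Meg N}2^{j\alpha_2/\grado}\abs{W^{(m)}_{2^{-j}}f}\,\dd\beta_L$ by $C\norm{f}\sum_{j\Meg N}2^{(N-j)Q_*/(p\grado)}$, which is uniformly finite. What fails is only the subsequent passage through $L^\infty$.
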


\begin{proof}
	(1) The assertion follows from Proposition~\ref{prop:1b} and~\cite[Proposition 8.3]{BCP} when $\alpha_2<\alpha_1$. In fact, $F^{\infty,q_1}_{\alpha_1}(\beta) \subseteq B^{\infty,\infty}_{\alpha_1}(\beta)\subseteq B^{\infty, q_2}_{\alpha_2}(\beta)\subseteq F^{\infty,q_2}_{\alpha_2}(\beta)$ continuously. Then, we may assume that $\alpha_1=\alpha_2$ and that $q_1<q_2$. Observe first that the assertion follows from Proposition~\ref{prop:1b} when $q_2=\infty$ (since $F^{\infty,\infty}_{\alpha_1}(\beta)=B^{\infty,\infty}_{\alpha_1}(\beta)$). This means that there is a constant $C>0$ such that, for $m=[(\alpha_1/\grado)_+]+1$,
	\[
	\norm{t^{-\alpha_1/\grado} (W^{(m)}_t f)(x)}_{L^\infty_{(t,x)}(\mi_1\otimes\beta)}\meg C \norm{\ee^{-\Lc}f}_{L^\infty(\beta)}+C \norm{t^{-\alpha_1/\grado} (W^{(m)}_t f)(x)}_{\Cc^{q_1}_{(t,x)}(\mi_1,1/\grado)}.
	\]
	Consequently,
	\[
	\begin{split}
	&t^{-Q_*/\grado}\norm{s^{-\alpha_1/\grado} \chi_{(0,t]\times B(x,t^{1/\grado})}(W^{(m)}_s f)(y)}_{L^{q_2}_{(s,y)}(\mi_1\otimes \beta)}\\
	&\qquad\meg C^{1-q_1/q_2} ( \norm{\ee^{-\Lc}f}_{L^\infty(\beta)}+ \norm{t^{-\alpha_1/\grado} (W^{(m)}_t f)(x)}_{\Cc^{q_1}_{(t,x)}(\mi_1,1/\grado)}),
	\end{split}
	\]
	for every $x\in G$ and for every $t\in (0,1]$,
	whence the conclusion. 
	
	(2) Arguing as in the proof of (2$'$) of~\cite[Proposition 8.3]{BCP}, one may show that, taking $m\in\N$ with $m>\alpha_1/\grado$ and $f\in F^{p,\infty}_{\alpha_1}(\beta_L)$, there is a constant $C_1>0$ such that
	\[
	\sum_{j\Meg N} 2^{j\alpha_2/\grado}\abs{  W^{(m)}_{2^{-j}}f  }\meg C_1 2^{-NQ_*/(p\grado)} \sup_{j\in \N} 2^{j\alpha_1/\grado}\abs{  W^{(m)}_{2^{-j}}f  },
	\]
	for every $N\in\N$,
	so that there is a constant $C_2>0$ such that
	\[
	\begin{split}
	&2^{NQ_*/\grado} \int_{B(x,2^{-N/\grado})} \sum_{j\Meg N} 2^{j\alpha_2/\grado}\abs{  (W^{(m)}_{2^{-j}}f )(y) }\,\dd \beta(y)\\
		&\qquad\meg  C_1 2^{-NQ_*/(p\grado  )}  2^{N Q_*/\grado} \int_{B(x,2^{-N/\grado})} \sup_{j\in \N} 2^{j\alpha_1/\grado}\abs{ ( W^{(m)}_{2^{-j}}f)(y)  }\,\dd \beta(y)\\
		&\qquad\meg C_2  \norm*{\sup_{j\in \N} 2^{j\alpha_1/\grado}\abs{ ( W^{(m)}_{2^{-j}}f)(y)  }  }_{L^p_y(\beta_L)}.
	\end{split}
	\]
	The continuity of the inclusion $F^{p,\infty}_{\alpha_1}(\beta_L)\subseteq F^{\infty,1}_{\alpha_2}(\beta)$ then follows from the arbitrariness of $x\in G$ and $N\in\N$. The general case then follows, thanks to (1) and Proposition~\ref{prop:5}.
	
	(3) This follows from (1) and~\cite[Proposition 8.3]{BCP}, using the equality $F^{\infty,\infty}_\alpha(\beta)=B^{\infty,\infty}_\alpha(\beta)$.
\end{proof}

We observe explicitly that studying the interpolation spaces of the `full' scale of the Triebel--Lizorkin spaces $F^{p,q}_\alpha(\beta)$, $p\in (1,\infty]$, $q\in[1,\infty]$, $\alpha\in\R$, seems to pose substantial difficulties in this generality. Apart from the analogue of  (5) of~\cite[Theorem 10.1]{BCP}, which may be proved with similar arguments (replacing~\cite[Proposition 8.3]{BCP} with Proposition~\ref{prop:2b}), the other results are substantially harder to deal with. For example, one may try to prove an equality of the form: $(\mathring F^{p_0,q_0}_{\alpha_0}(\beta), \mathring F^{\infty,q_1}_{\alpha_1}(\beta))_{[\theta]}=\mathring F^{p_\theta,q_\theta}_{\alpha_\theta}(\beta)$ for $p_0<\infty$ and suitably defined $p_\theta,q_\theta$, and $\alpha_\theta$. On the one hand, we may prove the inclusion $\supseteq$ directly, using the standard interpolation procedure for mixed norm spaces, and using (2) and (3) of Theorem~\ref{teo:15} (and the corresponding~\cite[Lemmas 7.5 and 7.7]{BCP}). On the other hand, for the inclusion $\subseteq$, we   incur in the non-trivial problem of `interpolating' between a `full' mixed norm $L^{q_0,p_0}$ and a `localized' norm $\Cc^{q_1}$. The techniques developed in~\cite{FrazierJawerth} cannot be applied here since we lack a reasonable `discretization' (which would allow us to study Calder\'on products instead of complex interpolation) and the `un-discretized' results are too weak to help us in this situation.  
 
We then pass to the algebra properties and pointwise multipliers. The following result in an analogue of~\cite[Theorem 3.1]{Calzi2}.

\begin{teo}\label{teo:6b}
	Take $\alpha>  0$, $p,p_1,p_2,p_3,p_4\in (1,\infty]$, and $q\in [1,\infty]$ such that $\frac{1}{p_1}+\frac{1}{p_2}=\frac{1}{p_3}+\frac{1}{p_4}=\frac{1}{p}$. 
	Then, there is a constant $C>0$ such that, for every $f\in F^{p_1,q}_\alpha(\beta)\cap L^{p_3}(\beta)$ and for every $g\in F^{p_4,q}_\alpha(\beta)\cap L^{p_2}(\beta)$,
		\begin{equation*}
			\norm{f g}_{F^{p,q}_\alpha(\beta)}\meg C \norm{f}_{F^{p_1,q}_\alpha(\beta)}\norm{g}_{L^{p_2}(\beta)}+ C \norm{f}_{L^{p_3}(\beta)} \norm{g}_{F^{p_4,q}_\alpha(\beta)}.
		\end{equation*}
\end{teo}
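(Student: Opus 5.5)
The plan is to follow the proof of~\cite[Theorem 3.1]{Calzi2}, which handles the case $p_1,\dots,p_4<\infty$, and to isolate the cases where some exponent equals $\infty$. There the mixed norm $L^{q,p}(\mi_1,\beta)$ gets replaced by the localized norm $\Cc^q(\mi_1,1/\grado)$. When $p<\infty$, so that only some of the $p_j$ equal $\infty$, the left-hand side is an ordinary $F^{p,q}_\alpha$ norm. In that case the Hölder steps in~\cite{Calzi2} need $L^\infty$ information on the factor with index $\infty$: either its $L^\infty$ norm, which is assumed, or its square function. The latter is controlled only through $\Cc^q$, and Lemma~\ref{lem:59} and Lemma~\ref{lem:60} will be used to pass between the two. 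When $p=\infty$ we have $p_1=\dots=p_4=\infty$, and the estimate becomes $\norm{fg}_{F^{\infty,q}_\alpha}\lesssim \norm{f}_{F^{\infty,q}_\alpha}\norm{g}_{L^\infty}+\norm{f}_{L^\infty}\norm{g}_{F^{\infty,q}_\alpha}$. This is the genuinely new case, and I will treat it first. By Proposition~\ref{prop:5} and the independence of the definitions, we may assume $\beta=\beta_L$ throughout.

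The first step is the low-frequency part. The term $\norm{\ee^{-\Lc}(fg)}_{L^\infty}$ is bounded by $C\norm{f}_{L^\infty}\norm{g}_{L^\infty}$, because $\ee^{-\Lc}$ is bounded on $L^\infty$ by Lemma~\ref{lem:3}.

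The second step is a paraproduct-type decomposition of $W^{(m)}_t(fg)$, exactly as in~\cite{Calzi2}. Write $f=\ee^{-\Lc}\text{-part}+\int_0^1 W^{(m)}_s f\,\frac{\dd s}{s}$ through the Calderón reproducing formula used in Theorem~\ref{teo:15}~(3), and do the same for $g$. Expanding $W^{(m)}_t$ applied to the product then produces terms of the form $\int\int K(t,s,s')\,(X W^{(m)}_s f)(Y\ee^{-s'\Lc} g)$, together with symmetric ones, where the derivatives $X,Y$ come from a Leibniz rule for $\Lc^m$ and the kernels carry the decay $\frac{s^\delta t^\eta}{(s+t)^{\delta+\eta}}$. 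In~\cite{Calzi2} each such term is bounded pointwise by $\abs{\,\cdot\,}\meg C\, T_{b,cs}$ applied to a maximal quantity like $W^{(\cdot),*}_{s,\eps}f$, multiplied by $\norm{g}_{L^\infty}$ or by a maximal function of $g$. I will keep the pointwise bounds from~\cite{Calzi2}. The factor involving the low-frequency part of $g$ is bounded by $\norm{g}_{L^\infty}$, again via Lemma~\ref{lem:3}.

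The third step transfers these pointwise bounds to the $\Cc^q$ norm. Here Lemma~\ref{lem:25b} and Lemma~\ref{lem:25bisb} replace Lemma~\ref{lem:25} and Lemma~\ref{lem:25bis}, and Lemma~\ref{lem:2bis} replaces~\cite[Lemma 5.11]{BCP} to absorb the operators $T_{b,cs}$. After this we are left with $\norm{t^{-\alpha/\grado}W'^{(m'),*}_{t,\eps}f}_{\Cc^q(\mi_1,1/\grado)}$, which Lemma~\ref{lem:8b} bounds by $\norm{f}_{F^{\infty,q}_\alpha}$. The step I expect to be the main obstacle is the "high-high" term, where both factors oscillate at comparable scales $s\approx s'\lesssim t$. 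In~\cite{Calzi2} this term is handled by Hölder in $x$ together with a square function of $g$ in $L^{p_4}$. With $p_4=\infty$ that square function is not in $L^\infty$. For this term I would use $\alpha>0$ instead: bound $\abs{W^{(m)}_{s'}g}\lesssim s'^{\alpha/\grado}\norm{g}_{B^{\infty,\infty}_\alpha}$ via Proposition~\ref{prop:1b}, so that only $\norm{g}_{F^{\infty,q}_\alpha}$ is needed, and pair it with $\abs{W^{(m)}_s f}$ bounded by $\norm{f}_{L^\infty}$. The gain $(s/t)^{\alpha/\grado}$ then makes the $s$-integral a Lemma~\ref{lem:25b}-type operator with $\delta=\alpha/\grado>0$. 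Finally, the mixed cases with $p<\infty$ and some $p_j=\infty$ are handled in the same way, using Lemma~\ref{lem:59} to replace a $\Cc^q$ factor by an $L^{q,\infty}$ factor on a large set $E$, and Lemma~\ref{lem:60} to return to the full $L^{q,p}$ norm.
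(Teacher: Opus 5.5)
\textbf{Gap in the high--high step.} The step you single out as the main obstacle is where the proposal fails. You bound $\abs{W^{(m)}_{s'}g}\lesssim s'^{\alpha/\grado}\norm{g}_{B^{\infty,\infty}_\alpha(\beta)}$ and $\abs{W^{(m)}_s f}\lesssim\norm{f}_{L^\infty(\beta)}$. After that, the function you feed into the Lemma~\ref{lem:25b}-type operator is, up to $T_{b,cs}$-smoothing, the constant $\norm{f}_{L^\infty(\beta)}\norm{g}_{B^{\infty,\infty}_\alpha(\beta)}$. For $q<\infty$ its $\Cc^q(\mi_1,1/\grado)$ norm is $+\infty$, because $\mi_1((0,t])=\int_0^t\frac{\dd s}{s}=+\infty$, so Lemma~\ref{lem:25b} gives nothing. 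What you actually obtain for the high--high term $\Pi$ is only a bound on $\sup_{(t,x)}t^{-\alpha/\grado}\abs{(W^{(m)}_t\Pi)(x)}$. That is a $B^{\infty,\infty}_\alpha=F^{\infty,\infty}_\alpha$ estimate: going through Proposition~\ref{prop:1b} throws away exactly the $q$-integrability in the scale variable that the conclusion needs.

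\textbf{How to repair it.} Swap the roles of the two factors. The $\Cc^q$ norm is solid, so $\norm{\phi\psi}_{\Cc^q}\meg\norm{\phi}_{L^\infty}\norm{\psi}_{\Cc^q}$. Keep the weight $s^{-\alpha/\grado}$ and the $\Cc^q$ norm on the factor $T_{b',s}W^{(m)}_{s/2}g$, and use only an $L^\infty$ bound on the $f$-factor. This is what the paper does. It applies Lemma~\ref{lem:25b} to $T_{b,t}[(W^{(k)}_tf)(W^{(h)}_tg)]$, dominated by $(T_{b',t}W^{(k)}_{t/2}f)(T_{b',t}W^{(h)}_{t/2}g)$, and concludes with Lemmas~\ref{lem:2bis}, \ref{lem:61} and~\ref{lem:8b}. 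So your diagnosis is inverted: no square function of $g$ in $L^\infty$ is ever needed, and the case $p=\infty$ is the routine one.

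\textbf{Where the new work actually lies.} The genuinely new case is $p<\infty$ with an infinite exponent. By \cite[Theorem 3.1]{Calzi2} and symmetry one may take $p_4=\infty$ and $p_3=p$. The difficulty is the low--high term $\norm{\norm{t^{-\alpha/\grado}(W^{(k)}_tf)(W^{(m)}_tg)}_{L^q_t(\mi_1)}}_{L^p_x(\beta)}$ with $k<m$. Here the $f$-factor is controlled only by $T^*_{b/4,1}f\in L^p$ and the $g$-factor only in $\Cc^q$, so H\"older cannot be applied directly. The paper proceeds in four steps:
\begin{enumerate}
\item It smears both factors by $T_{b,t/2}$, using Theorem~\ref{teo:7} and Lemma~\ref{lem:32}, so that the product satisfies the ball-comparability hypothesis of Lemma~\ref{lem:60}.
\item It takes the set $E$ given by Lemma~\ref{lem:59} for the $g$-factor.
\item It uses Lemma~\ref{lem:60} to restrict the $L^{q,p}$ norm to $E$.
\item It finishes with H\"older in $L^p\times L^{q,\infty}$ and the $L^p$-boundedness of $T^*_{b/4,1}$.
\end{enumerate}
Your last sentence names the right lemmas. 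However, ``handled in the same way'' would carry over the flawed high--high device, and the smearing step that makes Lemma~\ref{lem:60} applicable is missing.
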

  
\begin{proof}
	Observe first that, by~\cite[Theorem 3.1]{Calzi2}, we may reduce to the case in which $p_1=\infty$ or $p_4=\infty$. By symmetry, we may in fact assume that $p_4=\infty$, in which case   $p_3=p$. 	
	If $p=\infty$, then $p_1=p_2=p_3=p_4=\infty$ and the proof proceeds following the same arguments of the proof of~\cite[Theorem 3.1]{Calzi2}, with the usual replacements. More precisely, one may apply Lemma~\ref{lem:25b}, with any $\kappa\in (1/2,1)$, to $f(t,x)=T_{b,t}[(W^{(k)}_t f)(W^{(h)}_t g)](x)$ and $g(t,x)=(T_{b',t}W^{(k)}_{t/2} f)(x)(T_{b',t}W^{(h)}_{t/2} g)(x)$ for  a suitable $b'>0$ (for every $h,k=0,\dots, m$), and then proceed with similar estimates (cf.~also the modifications described below).
	Then, assume that $p<\infty$. If $p_1<\infty$, then the estimation of $\Pi_f^{(t')} g$ proceeds unchanged. For what concerns the estimation of $\Pi_g^{(t')} f$, one may first observe that, by Theorem~\ref{teo:7}, there are $b,C_1>0$ such that $\abs{  \Lc^k h_t}\meg C_1 p_{b,t}$ for every $k=0,\dots, m$  and for every $t\in (0,1]$, and then observe that, by Lemmas~\ref{lem:59} and~\ref{lem:60}, there are a $(\mi_1\otimes \beta)$-measurable subset $E$ of $(0,+\infty)\times G$ and  a constant $C_2>0$ such that
	\[
	\begin{split}
		&\sum_{k=0}^{m-1}\norm*{ \norm*{ t^{-\alpha/\grado}  (W^{(k)}_t f)(x) (W^{(m)}_t g)(x)   }_{L^q_t(\mi_1)}}_{L^p_x(\beta)}\\
		&\qquad \meg C_1^2 \sum_{k=0}^{m-1}\norm*{  \norm*{ t^{-\alpha/\grado}  (T_{b,t/2} W^{(k)}_{t/2} f)(x) (T_{b,t/2}W^{(m)}_{t/2} g)(x)   }_{L^q_t(\mi_1)}}_{L^p_{x}( \beta)}\\
		&\qquad  \meg  C_1^2  C_2 \sum_{k=0}^{m-1}\norm*{  \norm*{ t^{-\alpha/\grado} \chi_E(t,x) (T_{b/2,t/2} W^{(k)}_{t/2 } f)(x) ( T_{b/2,t/2}W^{(m)}_{t/2} g)(x)   }_{L^q_t(\mi_1)}}_{L^p_{x}( \beta)}
	\end{split}
	\]
	and
	\[
	\norm{\chi_E(t,x) t^{-\alpha/\grado} (T_{b/2,t/2}W^{(m)}_{t/2} g)(x)  }_{L^{q,\infty}_{t,x}(\mi_1,\beta)}\meg C_2\norm{t^{-\alpha/\grado}( T_{b/2,t/2}W^{(m)}_{t/2} g)(x)}_{\Cc^q_{(t,x)}(\mi_1,1/\grado)}.
	\]
	Now, by Lemma~\ref{lem:4} there is a constant $C_3>0$ such that
	\[
	T_{b/2,t/2}W^{(k)}_{t/2} f \meg C_1 T_{b/2,t/2}T_{b/2,t/2} f\meg C_3 T^*_{b/4,1} f
	\]
	for every $t\in (0,1]$, while 
	by Lemmas~\ref{lem:2bis},~\ref{lem:61}, and~\ref{lem:8b}  there is a constant $C_4>0$ such that
	\[
	\norm{(T_{b/2,t/2}W^{(m)}_{t/2} g)(x)}_{\Cc^q_{(t,x)}(\mi_1,1/\grado)}\meg  C_4\norm{g}_{F^{\infty,q}_\alpha(\beta)}.
	\]
	It then follows that
	\[
	\sum_{k=0}^{m-1}\norm*{  \norm*{ t^{-\alpha/\grado} \chi_E(t,x) ( T_{b/2,t/2}W^{(k)}_{t/2 } f)(x) (T_{b/2,t/2} W^{(m)}_{t/2 } g)(x)   }_{L^q_t(\mi_1)}}_{L^p_{x}( \beta)}\meg  m C_3 C_4\norm{T^*_{b/4 ,1} f}_{L^p(\beta)} \norm{g}_{F^{\infty,q}_\alpha(\beta)},
	\]
	whence the desired estimation by~\cite[Lemma 5.10]{BCP}. If, otherwise, $p_1=\infty$, then also the estimation of $\Pi_f^{(t')} g$ should be modified as above.
	The estimation of $\Pi(f,g)$ proceeds with similar modifications. 
\end{proof}

\begin{cor}
	Take $q\in [1,\infty]$ and $\alpha>0$. Then, the space of pointwise multipliers of $F^{\infty,q}_\alpha(\beta)$ (that is, the space of  $f\in \Sc'(G)$ such that the mapping $\Sc(G)\ni \phi\mapsto f \phi\in \Sc'(G)$ induces a continuous linear mapping $\mathring F^{\infty,q}_\alpha(\beta)\to F^{\infty,q}_\alpha(\beta)$) is the space $F^{\infty,q}_\alpha(\beta)$ itself.
\end{cor}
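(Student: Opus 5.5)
We prove the two inclusions separately, following the analogous argument for the spaces $F^{p,q}_\alpha(\beta)$ in~\cite{Calzi2}.

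\emph{Elements of $F^{\infty,q}_\alpha(\beta)$ are multipliers.} Take $f\in F^{\infty,q}_\alpha(\beta)$. By Proposition~\ref{prop:2b}(3), $f\in L^\infty(\beta)$, and likewise $\mathring F^{\infty,q}_\alpha(\beta)\subseteq F^{\infty,q}_\alpha(\beta)\subseteq L^\infty(\beta)$. Apply Theorem~\ref{teo:6b} with $p=p_1=p_2=p_3=p_4=\infty$ to $f$ and $\phi\in \Sc(G)$; here $\Sc(G)\subseteq F^{\infty,q}_\alpha(\beta)$, by Lemma~\ref{lem:8b} and Theorem~\ref{teo:7}. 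This gives
\[
\norm{f\phi}_{F^{\infty,q}_\alpha(\beta)}\meg C\norm{f}_{F^{\infty,q}_\alpha(\beta)}\norm{\phi}_{L^\infty(\beta)}+C\norm{f}_{L^\infty(\beta)}\norm{\phi}_{F^{\infty,q}_\alpha(\beta)}\meg C'\norm{f}_{F^{\infty,q}_\alpha(\beta)}\norm{\phi}_{F^{\infty,q}_\alpha(\beta)}.
\]
Since $C^\infty_c(G)\subseteq \Sc(G)$ is dense in $\mathring F^{\infty,q}_\alpha(\beta)$ by definition, $\phi\mapsto f\phi$ extends to a continuous map $\mathring F^{\infty,q}_\alpha(\beta)\to F^{\infty,q}_\alpha(\beta)$. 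The extension agrees with pointwise multiplication, because convergence in $F^{\infty,q}_\alpha$ implies convergence in $L^\infty$ and hence in $\Sc'(G)$ after multiplying by $f\in L^\infty$.

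\emph{Multipliers belong to $F^{\infty,q}_\alpha(\beta)$.} Let $f$ be a multiplier with operator norm $M$. We test on suitably translated bump functions. Fix $\psi\in C^\infty_c(G)$ with $\psi=1$ on $B(e,2)$. For $y\in G$, set $\psi_y\coloneqq\psi(y^{-1}\,\cdot\,)$. The $F^{\infty,q}_\alpha$ norm is left-invariant when $\beta=\beta_L$ (to which we reduce by a uniform argument like Proposition~\ref{prop:5}, with weight $\Delta_L^0$ since $p=\infty$). Hence $\norm{\psi_y}_{F^{\infty,q}_\alpha}$ is bounded uniformly in $y$, and so $\norm{f\psi_y}_{F^{\infty,q}_\alpha}\meg CM$. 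The key step is to show
\[
\norm{f}_{F^{\infty,q}_\alpha(\beta)}\meg C\sup_{y\in G}\norm{f\psi_y}_{F^{\infty,q}_\alpha(\beta)}.
\]
This is a localization property, which we obtain directly from the definition.
\begin{itemize}
\item For the term $\norm{\ee^{-t_0\Lc}f}_{L^\infty}$: near $y$ we write $f=f\psi_y+f(1-\psi_y)$. The contribution of $f(1-\psi_y)$ at points of $B(y,1)$ is controlled by the Gaussian decay of $h_{t_0}$ (Theorem~\ref{teo:7}), which requires a priori control of $f$ far from $y$. We get it from a uniformly locally finite covering by balls $B(y_j,1)$ (Lemma~\ref{lem:50}): then $\abs{f}\meg\sum_j \abs{f\psi_{y_j}}$ in the appropriate distributional sense. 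Combined with the exponential decay of $\ee^{-t_0\Lc}$ and the at most exponential growth of $\beta(B(e,r))$, the sum converges once $t_0$ is small. Any $t_0$ is allowed by Lemma~\ref{lem:8b}.
\item For the $\Cc^q$ term: only $t\meg 1$ and boxes $(0,t]\times B(x,t^{1/\grado})$ matter. On such a box, $W^{(m)}_s f=W^{(m)}_s(f\psi_x)+W^{(m)}_s(f(1-\psi_x))$. The second term has kernel supported at distance $\Meg1$ from the box. It is therefore bounded by $C s^{N}\sup_j\norm{\ee^{-t_0\Lc}(f\psi_{y_j})}_{L^\infty}$-type quantities, using the estimates $\abs{\Lc^m h_s(z)}\meg C s^{-m}p_{b,s}(z)$ and the decay of $p_{b,s}$ for $\abs{z}_*\Meg1$ (Lemma~\ref{lem:3}). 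Its $\Cc^q$ contribution is then harmless, since $s^{N-\alpha/\grado}$ is integrable against $\mi_1$.
\end{itemize}

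The main obstacle is the term $f(1-\psi_x)$. Since $f$ is only a tempered distribution, we must make sense of the decomposition and obtain the off-diagonal bounds. To handle this, first establish $f\in B^{\infty,\infty}_{-N}$ for some large $N$ via a partition of unity $\sum_j\phi_j=1$ subordinate to the lattice balls: each $f\phi_j=(f\psi_{y_j})\phi_j$ has norm bounded uniformly in $F^{\infty,q}_\alpha\subseteq B^{\infty,\infty}_\alpha$ (Proposition~\ref{prop:1b}). Then run the kernel estimates on the pieces $f\phi_j$, summing the exponentially decaying tails with the finite-overlap constant of Lemma~\ref{lem:50}.
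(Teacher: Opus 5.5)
Your argument is correct. The inclusion of $F^{\infty,q}_\alpha(\beta)$ into the multipliers is exactly the paper's: Theorem~\ref{teo:6b} with all exponents equal to $\infty$, combined with $F^{\infty,q}_\alpha(\beta)\subseteq L^\infty(\beta)$. The reverse inclusion follows a different route. The paper derives it from the single fact that the constant function $1$ lies in $F^{\infty,q}_\alpha(\beta)$, arguing as in \cite[Proposition 6.2]{Calzi2}. Roughly, it reaches $f=f\cdot 1$ by applying the multiplier to functions approximating $1$, which is very short but delegates the approximation and closure details. You instead test on the translates $\psi_y$ and use the left invariance of the norm. You then prove by hand the localization estimate $\norm{f}_{F^{\infty,q}_\alpha(\beta)}\meg C\sup_y\norm{f\psi_y}_{F^{\infty,q}_\alpha(\beta)}$ via off-diagonal Gaussian bounds. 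This is essentially the easy case $\alpha>0$ of the gluing half of Proposition~\ref{prop:30b}. Your route is self-contained and gives, as a by-product, a ``uniformly local'' description of $F^{\infty,q}_\alpha(\beta)$.

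You do overestimate the difficulty, though. Since $\alpha>0$, Proposition~\ref{prop:2b}(3) gives $\norm{f\psi_y}_{L^\infty(\beta)}\meg C\norm{f\psi_y}_{F^{\infty,q}_\alpha(\beta)}\meg C'M$ uniformly in $y$. Because $\psi_y=1$ on $B(y,2)$, it follows immediately that $f\in L^\infty(\beta)$ with $\norm{f}_{L^\infty(\beta)}\meg C'M$.

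With this bound the rest is routine. The estimate $\norm{\ee^{-t_0\Lc}f}_{L^\infty(\beta)}\meg C\norm{f}_{L^\infty(\beta)}$ is trivial. For $s\in(0,1]$ and $z\in B(x,1)$, Theorem~\ref{teo:7} gives $\abs{W^{(m)}_s(f(1-\psi_x))(z)}\meg C_N s^N\norm{f}_{L^\infty(\beta)}$. No smallness of $t_0$ is needed, because the factor $\ee^{-c\abs{w}_*^{\grado/(\grado-1)}}$ beats the exponential volume growth.

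So the detour through $B^{\infty,\infty}_{-N}$ and the covering argument in your first bullet are unnecessary. Your off-diagonal bound in terms of ``$\norm{\ee^{-t_0\Lc}(f\psi_{y_j})}_{L^\infty}$-type quantities'' should also be replaced by the $L^\infty$ bound above. Control of $\ee^{-t_0\Lc}(f\psi_{y_j})$ alone would not bound $W^{(m)}_s$ for $s<t_0$.
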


\begin{proof}
	The assertion follows from Theorem~\ref{teo:6b} and the fact that $F^{\infty,q}_\alpha(\beta)$ contains the function identically equal to $1$, arguing as in the proof of~\cite[Proposition 6.2]{Calzi2}.
\end{proof}

We then have the analogues of the results concerning localization (\cite[Propositions 4.1 and 4.4]{Calzi2}), which may be proved with similar techniques.

\begin{prop}\label{prop:34b}
	Let $\phi$ be a function of class $C^\infty$ on $G$, and assume that there are $c,\eta>0$  such that  $\abs{\phi(x)-1}\meg c\abs{x}_*^\eta \ee^{c\abs{x}_*}$ for every $x\in G$.
	Take $q\in [1,\infty]$ and $\alpha\in (0,\eta)$. In addition, take $m\in \N$ such that $m>\alpha/\grado$, and take $\mi\in \Mc_\Samp$. Then, there is a constant $C>0$ such that, for every $f\in \Sc'(G)$,
		\[
		\frac{1}{C}\norm{f}_{F^{\infty,q}_\alpha(\beta)}\meg \norm{f}_{L^\infty(\beta)}+ \norm{ t^{m-\alpha/\grado} [f*(\phi \Lc^m h_t)](x) }_{\Cc^{q}_{(t,x)}(\mi,1/\grado)}\meg C \norm{f}_{F^{\infty,q}_\alpha(\beta)}.
		\]
\end{prop}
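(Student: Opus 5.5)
The plan is to compare the localized kernel $\phi\Lc^m h_t$ with the full kernel $\Lc^m h_t$. Writing $t^m[f*(\phi\Lc^m h_t)]=W^{(m)}_t f+t^m\,f*((\phi-1)\Lc^m h_t)$, the two inequalities reduce to controlling the error term
\[
E_t f\coloneqq t^{m-\alpha/\grado}\, f*\big((\phi-1)\Lc^m h_t\big)
\]
in $\Cc^q(\mi,1/\grado)$ by $\norm{f}_{L^\infty(\beta)}$, i.e.\ an $L^\infty_{(t,x)}$ bound. We also need $\norm{f}_{L^\infty(\beta)}\meg C\norm{f}_{F^{\infty,q}_\alpha(\beta)}$, which holds by Proposition~\ref{prop:2b}(3) since $\alpha>0$. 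Conversely, $\norm{\ee^{-\Lc}f}_{L^\infty(\beta)}\meg C\norm{f}_{L^\infty(\beta)}$ by Young's inequality and Theorem~\ref{teo:7}. Hence both inequalities follow from the definition of $F^{\infty,q}_\alpha(\beta)$ with $t_0=1$ and the triangle inequality in $\Cc^q(\mi,1/\grado)$, once we prove
\[
\norm{(E_tf)(x)}_{\Cc^q_{(t,x)}(\mi,1/\grado)}\meg C\norm{f}_{L^\infty(\beta)}.
\]

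To bound $E_t f$, I would use Theorem~\ref{teo:7}(2) with $X=Y=1$: $\abs{\Lc^m h_t(x)}\meg C t^{-(Q_*+m\grado)/\grado}\ee^{\omega t}\ee^{-b(\abs{x}_*^\grado/t)^{1/(\grado-1)}}$. Combining this with the hypothesis $\abs{\phi(x)-1}\meg c\abs{x}_*^\eta\ee^{c\abs{x}_*}$, I absorb $\abs{x}_*^\eta$ into the Gaussian by writing $\abs{x}_*^\eta= t^{\eta/\grado}(\abs{x}_*^\grado/t)^{\eta/\grado}$. This gives
\[
t^m\abs{(\phi-1)\Lc^m h_t}\meg C' t^{\eta/\grado}\,\ee^{c\abs{\cdot}_*}p_{b/2,t},\qquad t\in(0,\max\supp\mi],
\]
using that $\mi$ has bounded support. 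By Young's inequality, with the $L^1$ norm of $\Delta_R^{-1}\ee^{c\abs{\cdot}_*}p_{b/2,t}$ bounded uniformly by Lemma~\ref{lem:3} (the character $\Delta_R$ grows at most exponentially in $\abs{\cdot}_*$), we get $\abs{(E_tf)(x)}\meg C'' t^{(\eta-\alpha)/\grado}\norm{f}_{L^\infty(\beta)}$.

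It remains to pass from this pointwise bound to the $\Cc^q$ norm, and I expect this to be the main (though mild) obstacle. For $(t,x)\in(0,c]\times G$ we estimate
\[
t^{-Q_*/(q\grado)}\norm{\chi_{(0,t]\times B(x,t^{1/\grado})}E f}_{L^q(\mi\otimes\beta_L)}\meg C\norm{f}_{L^\infty(\beta)}\norm{s^{(\eta-\alpha)/\grado}}_{L^q_s(\mi)},
\]
using $\beta_L(B(x,t^{1/\grado}))\asymp t^{Q_*/\grado}$. The right-hand factor is finite because $\eta>\alpha$ and $\mi\in\Mc_\Car$, which has bounded support: summing over the dyadic pieces $(2^{-k-1}c,2^{-k}c]$ gives a convergent geometric series. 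The case $q=\infty$ is immediate. For non-left-invariant $\beta$, I would either reduce to $\beta=\beta_L$ or carry the factors $\Delta_R^{-1}$ in Young's inequality, as above, absorbing them into the exponential weight. The only care needed is that the exponential factor $\ee^{c\abs{\cdot}_*}$ is harmless for bounded $t$ by Lemma~\ref{lem:3}.
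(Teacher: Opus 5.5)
Your proposal is correct and follows the route the paper intends (it defers to the argument of \cite[Proposition 4.1]{Calzi2}): you split off the error kernel $(\phi-1)\Lc^m h_t$, bound the resulting term by $C\,t^{(\eta-\alpha)/\grado}\norm{f}_{L^\infty(\beta)}$ via the Gaussian estimates and Young's inequality, and then conclude with Proposition~\ref{prop:2b}(3) and the norm equivalence of Lemma~\ref{lem:8b}. One small slip: the displayed bound on $\Lc^m h_t$ comes from Theorem~\ref{teo:7}(2) with $X=\Lc^m\in U_{m\grado}$ and $Y=1$, not with $X=Y=1$, although the estimate you actually wrote down is the right one.
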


\begin{prop}\label{prop:30b}
	Take   $q\in [1,\infty]$   and $\alpha>0$. Take $\delta>0$, $R\Meg 2$, and a $(\delta,R)$-lattice $(x_j)_{j\in J}$ on $G$.  
	Take two bounded families $(\phi_j)_{j\in J}$ and $(\psi_j)_{j\in J}$ of elements of $C^\infty_c(G)$. 
	Then, the continuous linear mappings
	\[
	\Ic_{(\phi_j)}\colon \Dc'(G)\ni u\mapsto (\phi_j(x_j^{-1}\,\cdot\,) u)\in \Dc'(G)^J
	\]
	and
	\[
	\Rc_{(\psi_j)}\colon \Dc'(G)^J \ni  (u_j) \mapsto \sum_{j\in J} \psi_j(x_j^{-1}\,\cdot\,) u_j \in \Dc'(G)
	\]
	induce continuous linear mappings 
	\[
	\begin{aligned} 
		F^{\infty,q}_\alpha(\beta)&\to \ell^\infty(J;F^{\infty,q}_\alpha(\beta)), & \mathring F^{\infty,q}_\alpha(\beta)&\to \ell^\infty(J;\mathring F^{\infty,q}_\alpha(\beta)), 
	\end{aligned}
	\]
	and
	\[
	\begin{aligned}
		\ell^\infty(J;F^{\infty,q}_\alpha(\beta))&\to F^{\infty,q}_\alpha(\beta),& \ell^\infty(J;\mathring F^{\infty,q}_\alpha(\beta))&\to \mathring F^{\infty,q}_\alpha(\beta), 
	\end{aligned}
	\] 
	respectively.
	If, in addition, $\sum_j (\phi_j\psi_j)(x_j^{-1}\,\cdot\,)=1$, then $\Rc_{(\psi_j)} \Ic_{(\phi_j)}=I$.
\end{prop}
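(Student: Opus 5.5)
The plan is to follow the proof of \cite[Proposition 4.4]{Calzi2}, replacing the mixed $L^{p}$--$L^q$ norms by the $\Cc^q(\mi_1,1/\grado)$ norms and using the lemmas of Sections~\ref{sec:3}--\ref{sec:4}. Two reductions come first. By Lemma~\ref{lem:8b}, the norm on $F^{\infty,q}_\alpha(\beta)$ may be computed with any admissible $m$, $t_0$, and $\mi$. By Theorem~\ref{teo:15}(4), or by Proposition~\ref{prop:34b} since $\alpha>0$ and Proposition~\ref{prop:2b}(3) give $F^{\infty,q}_\alpha(\beta)\subseteq L^\infty(\beta)$, it is enough to control $\norm{u}_{L^\infty(\beta)}$ together with the $\Cc^q$ term built from $W^{(m)}_t u$ for $t\in(0,1]$. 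The statement about $\mathring F^{\infty,q}_\alpha(\beta)$ follows from the one about $F^{\infty,q}_\alpha(\beta)$. Indeed, once $\Ic_{(\phi_j)}$ and $\Rc_{(\psi_j)}$ are bounded, $\Ic_{(\phi_j)}$ maps $C^\infty_c(G)$ into families of test functions. Conversely, a bounded family $(u_j)$ of elements of $\mathring F^{\infty,q}_\alpha(\beta)$ is sent by $\Rc_{(\psi_j)}$ to a locally finite sum. I still need to check whether this sum is a limit of the finite partial sums in norm. This uses the uniform bound below together with the fact that the contribution of the far-away $u_j$ to $W^{(m)}_t$ has Gaussian decay, and I expect it to be a minor point. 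The identity $\Rc_{(\psi_j)}\Ic_{(\phi_j)}=I$ is immediate.

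For $\Ic_{(\phi_j)}$, I will reduce to a uniform multiplier estimate. Translation invariance is available because $W^{(m)}_t$ commutes with left translations and $\Cc^q$ is defined with $\beta_L$; after reducing to $\beta=\beta_L$, this lets me replace $\phi_j(x_j^{-1}\,\cdot\,)$ by $\phi_j$. By the algebra property in Theorem~\ref{teo:6b} with all exponents equal to $\infty$,
\[
\norm{\phi_j u}_{F^{\infty,q}_\alpha(\beta)}\meg C\norm{\phi_j}_{F^{\infty,q}_\alpha(\beta)}\norm{u}_{L^\infty(\beta)}+C\norm{\phi_j}_{L^\infty(\beta)}\norm{u}_{F^{\infty,q}_\alpha(\beta)}.
\]
A bounded family in $C^\infty_c(G)$ is bounded in $\Sc(G)$, and $\Sc(G)\subseteq F^{\infty,q}_\alpha(\beta)$ continuously, for example via $B^{\infty,1}_\alpha\subseteq F^{\infty,q}_\alpha$ in Proposition~\ref{prop:1b}. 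This gives the uniform bound.

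For $\Rc_{(\psi_j)}$, set $v=\sum_j \psi_j(x_j^{-1}\,\cdot\,)u_j$ and write $v_j=\psi_j(x_j^{-1}\,\cdot\,)u_j$. By the previous step, $\norm{v_j}_{F^{\infty,q}_\alpha}\meg C\sup_k\norm{u_k}$. Each $v_j$ is supported in $x_jK$ for a fixed compact set $K$, and Lemma~\ref{lem:50} bounds the overlaps, so $\norm{v}_{L^\infty}\meg N\sup_j\norm{v_j}_{L^\infty}$. For the $\Cc^q$ part, I fix $(t,x)\in(0,1]\times G$ and split the sum $\sum_j W^{(m)}_s v_j$ on $(0,t]\times B(x,t^{1/\grado})$ into near indices, $d(x,x_j)\meg R'$ for a fixed large $R'$, and far indices. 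Lemma~\ref{lem:50} again bounds the number of near indices, so by the triangle inequality their contribution is at most $N'\sup_j\norm{v_j}_{F^{\infty,q}_\alpha}$.

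The main obstacle is the far indices. For these I plan to use Theorem~\ref{teo:7}, writing $W^{(m)}_s v_j=v_j*(s\Lc)^m h_s$. Because $d(y,x_jK)\Meg R'/2$ on the relevant ball, this is pointwise bounded by $C s^{-Q_*/\grado}\ee^{-b(1/s)^{1/(\grado-1)}}\ee^{-b'd(x,x_j)}\norm{v_j}_{L^\infty}$. The first factor beats any power $s^{-\alpha/\grado}$ as $s\to0$, and the sum over $j$ of $\ee^{-b'd(x,x_j)}$ converges by the exponential volume growth once $R'$ is large. To make this legitimate when $v_j$ is only a distribution, I would first write $v_j=\ee^{-\Lc}\tilde v_j+(\text{high-frequency part})$ as in \cite{Calzi2}; more simply, I would use $\alpha>0$ and Proposition~\ref{prop:2b}(3) to get $v_j\in L^\infty$ and then convolve directly. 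Lemma~\ref{lem:2bis} handles any residual Gaussian smoothing, and Lemma~\ref{lem:63} handles the enlargement of balls.
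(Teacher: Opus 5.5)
Your handling of $\Ic_{(\phi_j)}$ (Theorem~\ref{teo:6b} with all exponents $\infty$, left invariance of the norm, boundedness of $(\phi_j)$ in $\Sc(G)$) and of $\Rc_{(\psi_j)}$ on $F^{\infty,q}_\alpha(\beta)$ (near/far splitting, as in \cite[Proposition 4.4]{Calzi2}) follows the route the paper indicates, and it works. One small correction: $\sum_j \ee^{-b' d(x,x_j)}$ converges only if $b'$ exceeds the exponential volume-growth rate. Enlarging $R'$ does not give this. It does come for free, because for $s\meg 1$ Theorem~\ref{teo:7} gives the superexponential decay $\ee^{-b\abs{w}_*^{\grado/(\grado-1)}}$.

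The genuine gap is the step you called minor: proving that $\Rc_{(\psi_j)}$ maps $\ell^\infty(J;\mathring F^{\infty,q}_\alpha(\beta))$ into $\mathring F^{\infty,q}_\alpha(\beta)$ through norm convergence of the partial sums. This cannot be done, because the assertion is false for noncompact $G$.

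Here is a counterexample. Take $\psi_j=\psi\in C^\infty_c(G)$ with $\psi\Meg 0$ and $\psi=1$ on $B(e,R\delta)$. Take $\theta\in C^\infty_c(G)$ with $\theta=1$ on $\supp\psi$, and set $u_j\coloneqq\theta(x_j^{-1}\,\cdot\,)$. By left invariance of the norm, $(u_j)\in\ell^\infty(J;\mathring F^{\infty,q}_\alpha(\beta))$. However, $\Rc_{(\psi_j)}(u_j)=\sum_j\psi(x_j^{-1}\,\cdot\,)\Meg 1$ everywhere, since the balls $B(x_j,R\delta)$ cover $G$. Since $\alpha>0$, Proposition~\ref{prop:2b}(3) gives $F^{\infty,q}_\alpha(\beta)\subseteq L^\infty(\beta)$ continuously. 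So $\mathring F^{\infty,q}_\alpha(\beta)$ lies in the uniform closure of $C^\infty_c(G)$, that is, in the space of continuous functions vanishing at infinity. Hence $\Rc_{(\psi_j)}(u_j)\notin\mathring F^{\infty,q}_\alpha(\beta)$.

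The Gaussian decay you invoke only shows that far-away pieces do not affect a \emph{fixed} ball. The $L^\infty$ and $\Cc^q$ norms are suprema over all centres. In this example the tail $\sum_{j\notin F}\psi_j(x_j^{-1}\,\cdot\,)u_j$ has $L^\infty$ norm $\Meg 1$ for every finite $F\subseteq J$.

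The paper's own proof (a bare reference to \cite{Calzi2}) does not address this point, so the defect is in the statement. The correct version uses $c_0(J;\mathring F^{\infty,q}_\alpha(\beta))$, i.e.\ families with $\norm{u_j}_{F^{\infty,q}_\alpha(\beta)}\to 0$, and for that version your argument does go through. Each $\psi_j(x_j^{-1}\,\cdot\,)u_j$ lies in $\mathring F^{\infty,q}_\alpha(\beta)$ by the multiplier estimate. Your uniform bound, applied to the truncated family, gives $\norm{\Rc_{(\psi_j)}(u_j)-\sum_{j\in F}\psi_j(x_j^{-1}\,\cdot\,)u_j}_{F^{\infty,q}_\alpha(\beta)}\meg C\sup_{j\notin F}\norm{u_j}_{F^{\infty,q}_\alpha(\beta)}$, which tends to $0$. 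Symmetrically, approximating $u\in\mathring F^{\infty,q}_\alpha(\beta)$ by a test function shows that $\Ic_{(\phi_j)}$ maps $\mathring F^{\infty,q}_\alpha(\beta)$ into $c_0(J;\mathring F^{\infty,q}_\alpha(\beta))$.
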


Notice that, since we cannot argue by duality, we limit ourselves to $\alpha>0$.

\begin{proof}
	The continuity of $\Ic_{(\phi_j)}$ follows easily from Theorem~\ref{teo:6b}. The continuity of $\Rc_{(\psi_j)}$ may be proved as in the proof of~\cite[Proposition 4.4]{Calzi2}. 
\end{proof}

  Concerning the characterization with differences, we have not been able to extend the proof of\cite[Proposition 7.8]{Calzi2} to this context.

\end{document}